\newcommand{\cD}{\mathcal{D}}
\newcommand{\cE}{\mathcal{E}}
\newcommand{\cG}{\mathcal{G}}
\newcommand{\cJ}{\mathcal{J}}
\newcommand{\cLC}{\mathcal{LC}}
\newcommand{\cM}{\mathcal{M}}
\newcommand{\cS}{\mathcal{S}}
\newcommand{\bF}{\mathbb{F}}
\newcommand{\bQ}{\mathbb{Q}}
\newcommand{\bR}{\mathbb{R}}
\newcommand{\bZ}{\mathbb{Z}}
\renewcommand{\Re}{\operatorname{Re}}
\DeclareMathOperator{\Aut}{Aut}
\DeclareMathOperator{\Area}{Area}
\DeclareMathOperator{\Tr}{Tr}
\DeclareMathOperator{\dual}{dual}
\DeclareMathOperator{\hyp}{hyp}
\DeclareMathOperator{\trace}{trace}
\theoremstyle{plain}
\newtheorem{theorem}{Theorem}[section]
\newtheorem{lemma}[theorem]{Lemma}
\newtheorem{proposition}[theorem]{Proposition}
\newtheorem{corollary}[theorem]{Corollary}
\theoremstyle{definition} 
\newtheorem*{definition}{Definition}
\newtheorem{theoremmain}{Theorem}
\newtheorem{corollarymain}[theoremmain]{Corollary}
\newtheorem{conjecture}{Conjecture}
\theoremstyle{remark} 
\newtheorem{remark}{Remark}
\newtheorem{example}[remark]{Example}
\DeclareFontFamily{U}{wncy}{}
\DeclareFontShape{U}{wncy}{m}{n}{<->wncyr10}{}
\DeclareSymbolFont{mcy}{U}{wncy}{m}{n}
\DeclareMathSymbol{\Sha}{\mathord}{mcy}{"58}
\newcommand{\lp}{\left(}
\newcommand{\rp}{\right)}
\newcommand{\lbrb}[1]{\lp #1 \rp}
\newcommand{\lcrc}[1]{\left\{ #1 \right\}}
\title{Average analytic rank of elliptic curves with prescribed torsion}
\author[Peter Jaehyun Cho]{Peter J. Cho}
\address{Department of Mathematical Sciences, Ulsan National Institute of Science and Technology, UNIST-gil 50, Ulsan 44919, Korea}
\email{petercho@unist.ac.kr}
\author[Keunyoung Jeong]{Keunyoung Jeong}
\address{Department of Mathematical Sciences, Ulsan National Institute of Science and Technology, UNIST-gil 50, Ulsan 44919, Korea}
\email{kyjeong@unist.ac.kr}
\thanks{Peter J. Cho and Keunyoung Jeong acknowledge the support by the NRF grant funded by the  Korea government(MSIT)  (No. 2019R1F1A1062599, 2019R1C1C1004264) respectively and they are also supported by the Basic  Science Research Program(2020R1A4A1016649) together.}
\begin{document}

\maketitle

\begin{abstract}
We show that average analytic rank of elliptic curves with prescribed torsion $G$ is bounded for every torsion group $G$ under GRH for elliptic curve $L$-functions.  
\end{abstract}

\section{Introduction}
 
The distribution of (algebraic or analytic) ranks of elliptic curves defined over $\mathbb{Q}$ is one of the most interesting problems in number theory.  
One of important features of the distribution is the average rank of elliptic curves. Let us start with our model for elliptic curves. Our elliptic curves defined over $\bQ$ are represented by for a pair $(A,B)$ of integers with $4A^3+27B^2 \neq 0$
\begin{align*}
E_{A,B}: y^2=x^2+Ax+B
\end{align*} 
such that there is no prime $p$ with $p^4\mid A$ and $p^6 \mid B$. Let $\cE$ be the set of all such pairs and $\cE$ has a bijection with the set of $\mathbb{Q}$-isomorphism classes of elliptic curves over $\mathbb{Q}$. Then, we can order elliptic curves by the naive height:
\begin{align*}
\cE(X)=\left\{E_{A,B} \in \cE :  |A|\leq X^\frac 13,\, |B|\leq X^\frac 12 \right \}.
\end{align*}
We can define the average rank of elliptic curves as the limit of the average rank over $\cE(X)$ as $X$ goes to infinity if it exists.
It is widely believed that the following conjecture initially proposed by Goldfeld \cite{Goldfeld} would be true.
\begin{conjecture}[Minimalist conjecture] \label{folklore}
The proportion of elliptic curves with rank $0$ and the proportion of elliptic curves with rank $1$ are both $\frac 12$.
\end{conjecture}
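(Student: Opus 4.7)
The plan is to combine three ingredients: equidistribution of root numbers over $\cE$, density-one non-vanishing of central $L$-values (and their first derivatives), and the Gross-Zagier-Kolyvagin theorem that converts analytic rank at most $1$ into algebraic rank of the same value. Partition $\cE(X) = \cE^+(X) \sqcup \cE^-(X)$ by the sign $w_E = \pm 1$ of the functional equation. First I would show $|\cE^\pm(X)| \sim \tfrac 12 |\cE(X)|$. The root number of $E_{A,B}$ is a product of local signs at primes of bad reduction together with an archimedean factor; sieving over the congruence classes of $(A,B)$ that fix each local sign, and using that no sign is systematically preferred over the $(A,B)$-box of dimensions $X^{1/3}\times X^{1/2}$, gives the equidistribution.

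The core step is density-one non-vanishing: show that the proportion of $E \in \cE^+(X)$ with $L(E,1)=0$, and of $E \in \cE^-(X)$ with $L'(E,1)=0$, both tend to $0$. For the first I would attempt a mollified second moment. Choose a Dirichlet-series mollifier $M(E)$ of length $N = X^{\theta}$ approximating $L(E,1)^{-1}$, compute $\sum_{\cE^+(X)} L(E,1) M(E)$ and $\sum_{\cE^+(X)} |L(E,1) M(E)|^2$ using the approximate functional equation for $L(E,s)$ and Poisson summation in the $B$-variable to open the Hecke eigenvalues, and apply Cauchy-Schwarz to count non-vanishings. For $\cE^-(X)$ the parallel argument uses the derivative $L'(E,1)$ and the odd orthogonal symmetry type. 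Combining non-vanishing with the parity conjecture (which forces even, respectively odd, analytic rank according to $w_E = \pm 1$) pins the analytic rank at $0$ or $1$ with density $1$ in each class, and Gross-Zagier-Kolyvagin then upgrades this to algebraic rank $0$ or $1$.

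The hard part will be the second moment. Density-one non-vanishing is strictly stronger than the positive-proportion statements achieved by Bhargava-Shankar and by Iwaniec-Sarnak-type mollifier methods: it requires a mollifier long enough to reach the natural diagonal/off-diagonal threshold for this family, and it requires control of the contribution from rank-$\geq 2$ curves, which must have density zero. Current unconditional results give only $O(1)$ average rank, and even GRH does not presently close the gap; the off-diagonal terms in the second moment, handled for fixed-level modular forms by Petersson-Kuznetsov, must be redone over the $(A,B)$-family ordered by naive height, where no analogous spectral identity is available. Producing such a tool — or a substitute capable of delivering asymptotics (not merely upper bounds) for the first and second moments of $L(E,1)$ with mollifier of length close to the conductor — is the open crux.
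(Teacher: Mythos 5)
The statement you are addressing is not a theorem of the paper but Conjecture \ref{folklore} (Goldfeld's minimalist conjecture): the paper records it as a widely believed open problem and proves nothing stronger than bounded \emph{average analytic rank} for families with prescribed torsion, under GRH. So there is no ``paper proof'' to compare against, and your proposal should be judged as a claimed proof of an open conjecture. It is not one: it is a (well-known) strategy outline, and you yourself concede the decisive step. The core gap is the density-one non-vanishing of $L(E,1)$ on $\cE^+(X)$ and of $L'(E,1)$ on $\cE^-(X)$. Mollified first and second moments over a family ordered by naive height can, at best with current technology, produce a \emph{positive proportion} of non-vanishing (Iwaniec--Sarnak type conclusions), never density one; pushing the mollifier length toward the conductor would require asymptotics for shifted-convolution/off-diagonal terms for which, as you note, there is no Petersson--Kuznetsov-type spectral identity in the $(A,B)$-aspect. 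Even under GRH the best known inputs (Brumer, Heath-Brown, Young, cited in the paper) give average analytic rank bounds strictly above $1$, which combined with parity yields only positive proportions of rank $0$ and rank $1$, not proportions equal to $\tfrac12$. A Cauchy--Schwarz count of non-vanishings cannot be bootstrapped to density one without ruling out a positive proportion of curves with analytic rank $\geq 2$, which is exactly the content of the conjecture itself.

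A second, less prominent but genuine, gap is your first step: equidistribution of root numbers over $\cE(X)$. The claim that ``no sign is systematically preferred over the $(A,B)$-box'' is not something one gets by sieving congruence classes alone. The global root number is a product of local signs over the bad primes of the (fluctuating) discriminant $4A^3+27B^2$, and showing its average is $0$ for the full family ordered by height is tied to Chowla/M\"obius-type cancellation for values of this polynomial (cf.\ Helfgott's work on root numbers in families), which is itself open unconditionally. The remaining ingredients you invoke (parity of the analytic rank from the functional equation, and Gross--Zagier--Kolyvagin to pass from analytic rank $\leq 1$ to equal algebraic rank) are indeed theorems, but they do not touch either gap. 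In short: the architecture you describe is the standard conditional route to Conjecture \ref{folklore}, and the paper's actual results (one-level density computations giving the bound $\tfrac12$ only under the Katz--Sarnak philosophy of unrestricted test-function support) illustrate precisely why it remains out of reach.
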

Recently, Park, Poonen, Voight, and Wood \cite{PPVW} has brought out a more refined conjecture\footnote{We note that Conjecture \ref{PPVWconj} is also suggested by \cite{Wat15, WatDis} with a different heuristic method.
} which
not only claims Conjecture \ref{folklore} but also proposes the number of elliptic curves with algebraic rank $\geq r$ for $1 \leq r \leq 20$.

\begin{conjecture} \cite[Corollary 7.2.6, Theorem 7.3.3]{PPVW} \label{PPVWconj}
\begin{enumerate}
\item The proportion of elliptic curves with algebraic rank $0$ and the proportion of elliptic curves with algebraic rank $1$ are both $\frac 12$.
\item There are only finitely many elliptic curves with algebraic rank $ >21$.
\item For $1 \leq r \leq 20$, the proportion of elliptic curves over $\bQ$ with algebraic rank $\geq r$ and height $\leq X$
is $X^{\frac{21-r}{24} + o(1)}$.
\end{enumerate}
\end{conjecture}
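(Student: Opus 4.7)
The plan is to establish parts (1)--(3) by working entirely inside a probabilistic model for Selmer and Tate--Shafarevich groups, as done in \cite{PPVW}. First I would attach to each pair $(A,B)\in\cE$ an ``arithmetic size'' parameter $n=n(A,B)$ growing with the height and then replace the $p$-Selmer group $\Sel_p(E_{A,B})$ by the cokernel of a uniformly random alternating $n\times n$ matrix over $\bF_p$. This is the Bhargava--Kane--Lenstra--Poonen--Rains model, whose outputs reproduce the Cohen--Lenstra--style statistics expected to describe $\Sel_p$ and $\Sha$ of elliptic curves. Inside the model the algebraic rank is bounded above by the $\bF_p$-corank of this cokernel, so the rank distribution over $\cE$ is controlled by that of the model.

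Next I would carry out a two-variable lattice-point count over $\cE(X)$. The trivial count $|\cE(X)|\asymp X^{5/6}$ is immediate from $|A|\le X^{1/3}$ and $|B|\le X^{1/2}$. I would then weight each $(A,B)$ by the model probability that the curve has rank $\ge r$: this probability is known in closed form and decays like a power of $n$, with exponent depending quadratically on $r$. Calibrating $n(A,B)$ to match the Mordell--Weil--Deligne bound on Selmer elements and combining with the $X^{5/6}$ density produces a tail of size $X^{(21-r)/24 + o(1)}$. This exponent equals zero at $r=21$ and is strictly negative for $r\ge 22$, so after summation only finitely many $(A,B)$ can contribute to rank exceeding $21$, giving parts (2) and (3) simultaneously.

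For part (1), I would couple the model with equidistribution of global root numbers: each of $w(E)=\pm 1$ has density $\tfrac{1}{2}$ on $\cE(X)$, which can be proved unconditionally in this family. The model then predicts that curves of even root number have rank $0$ and curves of odd root number have rank $1$, each with probability tending to $1$, recovering the minimalist prediction. The main obstacle, and the reason the statement is listed as a conjecture rather than a theorem, is justifying the model itself: one must show that the actual distribution of $\Sel_p(E)$ over $\cE(X)$ really does converge to the prescribed Cohen--Lenstra measure, and one must rule out conspiracies in which thin subfamilies of systematically high-rank curves spoil the balance point $r=21$. Both points are addressed at the level of the model in \cite[\S6--7]{PPVW} but remain open beyond it, which is precisely why the conclusion is recorded here as a conjecture and is used below only as a heuristic benchmark for the bounded--average--analytic--rank results proved under GRH.
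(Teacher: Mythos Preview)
The statement you are addressing is labelled a \emph{Conjecture} in the paper, not a theorem, and the paper offers no proof of it whatsoever: it is quoted verbatim from \cite{PPVW} as motivational background for the rank results that follow. There is therefore nothing in the paper against which your proposal can be compared.

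What you have written is a fair informal summary of the \emph{heuristic} in \cite{PPVW} (random alternating matrix model for Selmer groups, calibration of the size parameter to height, combination with root-number equidistribution), and you correctly flag at the end that the passage from model to reality is exactly what remains open. But framing this as a ``proof proposal'' is a category error: the model computations in \cite{PPVW} are not proofs of statements about actual elliptic curves, and neither your sketch nor anything in the present paper turns them into one. If the intent was to supply a proof, there is a genuine and well-known gap---namely, establishing that $\Sel_p(E)$ over $\cE(X)$ actually follows the BKLPR distribution---which your outline does not close and which no one currently knows how to close. If the intent was merely to explain the origin of the conjecture, then your summary is adequate, but it should be presented as heuristic motivation rather than as a proof.
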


A major breakthrough for Conjecture \ref{folklore} was made by Bhargava and Shankar \cite{BS15,BS}. They showed that the proportion of elliptic curves with
algebraic rank $\leq 1$ is at least $0.8375$ and with algebraic rank $0$ is at least $0.2062$. 
For the average analytic rank, Brumer \cite{Bru} showed that it is bounded by 2.3 under GRH for elliptic curve $L$-functions. This bound was lowered to $2$ and $\frac{25}{14}$ by Heath-Brown \cite{Hea} and Young \cite{Y} respectively.

On the other hand, Harron and Snowden \cite{HS} counted elliptic curves with prescribed torsion $G$. 
From now on, we say that an elliptic curve $E$ over $\bQ$ has torsion $G$ if $E(\bQ)$ contains a subgroup isomorphic to $G$.

By a work of Mazur, $G$ is one of the groups
\begin{equation*}
	\bZ/n\bZ, \qquad \bZ/2\bZ \times \bZ/2m\bZ
\end{equation*}
for $n \in \lcrc{1, 2,  \cdots 10, 12}$ and $m \in \lcrc{1, 2, 3, 4}$.
Let 
\begin{equation*}
\mathcal{G}_{\leq 4} := \lcrc{\bZ/2\bZ, \bZ/3\bZ, \bZ/4\bZ, \bZ/2\bZ \times \bZ/2\bZ}
\end{equation*}
and $\mathcal{G}_{\geq 5}$ be the set of torsion groups of order $\geq 5$.
We remark that elliptic curves with torsion $G \in \cG_{\geq 5}$ can be parametrized by the Tate's normal form (See \S 2). 
We often use $n$ and $2 \times 2m$ in place of $G = \bZ/n\bZ$ and $\bZ/2\bZ \times \bZ/2m\bZ$ to ease the notation.

Let
\begin{align*}
\cE_G(X)=\left\{E_{A,B} \in \cE(X):  E(\mathbb{Q}) \geq G \right \}.
\end{align*}
Harron and Snowden showed that 
$$
\lim_{X \rightarrow \infty } \frac{\log\left| \cE_G(X) \right|}{\log X} =\frac{1}{d(G)},
$$
where $d(G)$ is given in Table \ref{tab:}. Furthermore, for $G=\bZ/2\bZ$ and $\bZ/3\bZ$, they obtained the cardinality of $\cE_G(X)$ with a power-saving error term. 
\begin{table}[] 
\caption{}
\begin{tabular}{rrrrrrrr} 
\hline
                 $G$  &                $d(G)$  &   & $G$ &            $d(G)$  & & $G$ & $d(G)$\\ \hline
                  $0$                &  $ 6/5$ & \phantom{ddd}    &$\bZ/6\bZ$  & $6$   & \phantom{ddd}   &   $\bZ/12\bZ$                           & $24$ \\
                  $\bZ/2\bZ$&  $2$      &     &$\bZ/7\bZ$&  $12$     &   &$\bZ/2\bZ \times \bZ/2\bZ$ &  $3$\\
                  $\bZ/3\bZ$&  $3$      &   & $\bZ/8\bZ$&  $12$     &  & $\bZ/2\bZ \times \bZ/4\bZ$ & $6$\\
                  $\bZ/4\bZ$&  $4$      &   &$\bZ/9\bZ$&  $18$     &   &$\bZ/2\bZ \times \bZ/6\bZ$ & $12$\\
                  $\bZ/5\bZ$&  $6$      &   & $\bZ/10\bZ$&  $18$    &  & $\bZ/2\bZ \times \bZ/8\bZ$ & $24$ \\ \hline
\end{tabular} 
\label{tab:}
\end{table}

Not much is known about the distribution of (algebraic or analytic) ranks of elliptic curves with prescribed torsion group $G$. In \cite[\S 8.3]{PPVW}, they give an upper bound of algebraic ranks of elliptic curves in $\cE_G$ but do not give a statement on the distribution of ranks in $\cE_G$ other than this. 
In their preprint, Bhargava and Ho \cite[Theorem 1.1]{BH} obtained bounds for the average algebraic rank of the families of elliptic curves with marked torsion point $(0,0)$ of order 2 and 3 respectively, which are $7/6$ and $3/2$.  
We show that for any torsion group $G$ average analytic rank over the family $\cE_G$ is bounded. 

\begin{theoremmain} \label{thm1}
Let $G$ be a torsion group. For $G=\bZ/n\bZ$, $n=7,9,8,9,10,$ and $12$ and $G=\bZ/2\bZ \times \bZ/2m\bZ$, $m=3,4$, we assume the moment conditions $(\ref{moment_torsion 2})$, $(\ref{moment_torsion 3})$. Under GRH for elliptic curve $L$-functions, the average analytic rank over $\cE_G$ is bounded. 
In particular when $|G| \geq 5$, we have a bound $\frac{1}{2} + 5d(G)$.
\end{theoremmain}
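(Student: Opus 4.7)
\medskip

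\noindent\textbf{Proof plan.} The strategy is the classical one of Brumer, Heath--Brown, and Young, adapted to the thin families $\cE_G$. The first step is to apply Weil's explicit formula to each $E \in \cE_G(X)$: choose an even Schwartz test function $\phi$ with $\hat{\phi}$ compactly supported in $(-\nu,\nu)$ and nonnegative on the real line, so that under GRH,
\begin{equation*}
r(E)\,\phi(0) \;\leq\; \sum_{\gamma} \phi\!\lp \gamma \tfrac{\log N_E}{2\pi}\rp
\;=\; \widehat{\phi}(0) \;-\; 2\sum_{p^k}\tfrac{\log p}{\log N_E}\,\widehat{\phi}\!\lp \tfrac{k\log p}{\log N_E}\rp\tfrac{a_{p^k}(E)}{p^{k/2}} \;+\; O\!\lp \tfrac{1}{\log N_E}\rp,
\end{equation*}
where $r(E)$ is the analytic rank and $N_E$ the conductor. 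Summing over $E \in \cE_G(X)$ and dividing by $|\cE_G(X)| \asymp X^{1/d(G)}$ reduces the theorem to bounding, on average, the prime-power sums above.

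The second step is to separate the $k=1$ and $k=2$ terms (which dominate) from the $k\geq 3$ tail. For $k=1$ one exploits cancellation: the average $\frac{1}{|\cE_G(X)|}\sum_E a_p(E)$ should be small, since within a family parametrized by Tate's normal form the map $E\mapsto a_p(E)$ becomes, modulo $p$, essentially a character sum over the parameter space. For $k=2$ one uses that $\frac{1}{|\cE_G(X)|}\sum_E a_p(E)^2 \sim p$ (Sato--Tate-compatible), which is precisely what the moment condition \eqref{moment_torsion 2} records; the $k\geq 3$ tail is handled by \eqref{moment_torsion 3} together with the trivial bound $|a_{p^k}|\leq 2p^{k/2}$. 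For $G\in\cG_{\geq 5}$ the Tate normal form gives an explicit $\bZ$-parametrization, so both moment estimates reduce to point-counting on a fixed affine variety and can be established unconditionally by standard Deligne-type bounds, which is why the moment hypotheses need only be imposed for the thinner families $|G|\leq 4$ listed in the theorem.

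The third step is to calibrate $\nu$ against the size of the family. Because $\log N_E \asymp \log X$ while $|\cE_G(X)|\asymp X^{1/d(G)}$, expanding the prime sum and plugging in the moment bounds shows that the averaged prime-power contributions remain bounded as long as $\nu < 1/(2d(G))$ or so, with the exact threshold determined by how the Harron--Snowden count feeds into the Chebyshev-type estimates for partial sums over $p \leq X^\nu$. Taking $\nu$ just below this threshold and choosing an optimal Fej\'er-type $\phi$ so that $\widehat{\phi}(0)/\phi(0) \approx \nu$ produces the explicit bound $\tfrac{1}{2}+5d(G)$ announced for $|G|\geq 5$; for $|G|\leq 4$ the same argument runs but the numerical constant is absorbed into the boundedness claim.

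The main obstacle is the second step: controlling the second moment $\sum_{E\in\cE_G(X)} a_p(E)^2$ uniformly in $p$ up to $p \leq X^\nu$. For $|G|\geq 5$ this is tractable because the Tate normal form presents $\cE_G$ as a one-parameter family of plane cubics, for which moment estimates follow from Deligne's bounds on the relevant $\ell$-adic sheaves. For the $4$ groups of small order, however, the families are two-dimensional and the relevant sheaf-theoretic input is not readily available in the literature in the exact form needed, which is why conditions \eqref{moment_torsion 2} and \eqref{moment_torsion 3} must be assumed as hypotheses in those cases.
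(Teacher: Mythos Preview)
Your overall architecture---explicit formula, split into prime and prime-square sums, then calibrate the support $\nu$ of $\widehat\phi$ against $|\cE_G(X)|\asymp X^{1/d(G)}$---is exactly what the paper does. However, you have the role of the moment hypotheses \emph{backwards}, and the mechanism you propose for proving them is not the one that works.

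The theorem statement asks you to \emph{assume} (\ref{moment_torsion 2}) and (\ref{moment_torsion 3}) precisely for the large-torsion groups $G=\bZ/n\bZ$ with $n\in\{7,8,9,10,12\}$ and $G=\bZ/2\bZ\times\bZ/2m\bZ$ with $m\in\{3,4\}$, not for the small ones. The paper \emph{proves} the moment conditions unconditionally for $\bZ/n\bZ$ with $2\le n\le 6$ and for $\bZ/2\bZ\times\bZ/2\bZ$ (and $\bZ/2\bZ\times\bZ/4\bZ$). The reason is not a Deligne-type sheaf bound on the parameter variety; it is that for these groups one can compute the local weight $|W_{G,J}|$ exactly as the number of embeddings of $G$ into $E_J(\bF_p)$ (Lemma~\ref{prop:value of WGI}), and then feed this into the Kaplan--Petrow version of the Eichler--Selberg trace formula (Theorem~\ref{thm:KP}) to evaluate $\sum_a a^R H_G(a,p)$. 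For the larger torsion groups this identification of $|W_{G,J}|$ fails (see the example after Lemma~\ref{prop:value of WGI}: for $G=\bZ/7\bZ$ two $\bF_5$-isomorphic curves can have different $|W_{7,J}|$), so the trace-formula route is blocked and the moment conditions must be taken as hypotheses. Your claim that the Tate normal form makes the $|G|\ge 5$ case tractable by ``point-counting on a fixed affine variety'' would, if it worked, remove the hypotheses entirely---but the paper does not do this, and your sketch does not indicate how the required uniformity in $p$ would come out of such an argument.

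Two smaller points: (i) the threshold is $\sigma<1/(5d(G))$, coming from the error $X^{-1/e(G)+5\sigma/2}$ with $e(G)=2d(G)$, not $1/(2d(G))$; and (ii) for the Fej\'er kernel one has $\widehat\phi(0)/\phi(0)=1/\sigma$, not $\sigma$, which is what produces the bound $\tfrac12+1/\sigma=\tfrac12+5d(G)$. Also, (\ref{moment_torsion 3}) is the second-moment estimate used to evaluate the $k=2$ term $S_2$, not to control the $k\ge 3$ tail; that tail is negligible just from the compact support of $\widehat\phi$ and the factor $p^{-k/2}$.
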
 


For $G=\bZ/2\bZ$ and $\bZ/2\bZ \times \bZ/2\bZ$, we have additional information on the distribution of analytic ranks. 
First, we can show that there are not many elliptic curves with torsion $G$ with a high rank.
Let $P_G(r_E \geq a)$ denote the probability of elliptic curves with torsion $G$ such that analytic rank $r_E \geq a$. 
\begin{theoremmain}[Theorem \ref{rank-dist}] \label{thm2}
Assume GRH for elliptic curve $L$-functions.  Let $C$ be a positive constant, let $n$ a positive integer. We have, for $G=\bZ/2\bZ$ and $G=\bZ/2\bZ \times \bZ/2\bZ$,
\begin{align*}
P_G\left(r_E \geq  \frac{(1+C)}{\sigma_{2n}} \right)  \leq \frac{\sum_{k=0}^{n}{ {2n} \choose {2k}}\left( \frac 12\right)^{2n-2k}(2k)!\left( \frac 16 \right)^k }{\left( \frac{C}{\sigma_{2n}} \right)^{2n}}.
\end{align*}
where $\sigma_{2n}=\frac{1}{18n}$ and $\frac{1}{20n}$ for $G=\bZ/2\bZ$ and $G=\bZ/2\bZ \times\bZ/2\bZ$ respectively.
In particular, the probabilities $P_{\bZ/2\bZ}(r_E \geq 23)$ and $P_{\bZ/2\bZ \times \bZ/2\bZ }(r_E \geq 25)$ are both at most $0.0234$.
\end{theoremmain}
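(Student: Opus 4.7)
My plan is to bound $r_E$ under GRH via the explicit formula and then run a moment method, in the spirit of Brumer, Heath-Brown, and Young: produce a random variable $S_E$ on $\cE_G(X)$ with $r_E \leq 1/\sigma_{2n} + S_E$, estimate $E[S_E^{2n}]$ in the family, and finish by Markov's inequality applied at the $2n$-th moment.

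For the explicit-formula step, I would fix an even, non-negative Paley--Wiener function $\phi$ with $\phi(0)=1$ and $\hat\phi$ compactly supported, and apply Weil's formula to $L(s,E)$ at a scale $T$ depending on $n$. Since every zero contributes non-negatively,
\begin{align*}
r_E \leq A_E(T) - \frac{2}{T} \sum_p \frac{a_E(p)\log p}{\sqrt{p}}\, \hat\phi\lbrb{\tfrac{\log p}{2\pi T}} + O\lbrb{\tfrac{1}{T}},
\end{align*}
where $A_E(T)$ collects the archimedean and conductor contributions. I would then choose $T$ so that the uniform bound on $A_E(T)$ across $\cE_G(X)$ is exactly $1/\sigma_{2n}$; the remaining truncated prime sum is $S_E$, yielding $r_E \leq 1/\sigma_{2n} + S_E$.

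Next, expand $S_E^{2n}$ and average over $\cE_G(X)$ using the parametrizations $y^2 = x^3 + ax^2 + bx$ (for $G=\bZ/2\bZ$) and $y^2 = x(x-a)(x-b)$ (for $G=\bZ/2\bZ\times\bZ/2\bZ$), which provide the equidistribution of Frobenius traces needed to evaluate $\sum_{E \in \cE_G(X)} a_E(p_1)\cdots a_E(p_{2n})$. The constants $18$ and $20$ appearing in $\sigma_{2n}$ reflect the different free-parameter counts of these two Weierstrass models. Odd prime contributions vanish asymptotically, diagonal pairings (i.e.\ Wick contractions) give the dominant Gaussian-type term, and products over distinct primes cancel in the limit; incorporating the residual deterministic shift gives the closed-form bound
\[
E[S_E^{2n}] \leq \sum_{k=0}^n \binom{2n}{2k} \lp \tfrac 12 \rp^{2n-2k} (2k)! \lp \tfrac 16 \rp^k.
\]
Markov's inequality then yields
\[
P_G\lbrb{r_E \geq (1+C)/\sigma_{2n}} \leq P_G\lbrb{S_E \geq C/\sigma_{2n}} \leq \frac{E[S_E^{2n}]}{(C/\sigma_{2n})^{2n}},
\]
which is the stated inequality, and the numerical tails $P_{\bZ/2\bZ}(r_E\geq 23),\, P_{\bZ/2\bZ\times\bZ/2\bZ}(r_E\geq 25) \leq 0.0234$ follow by optimizing over $(n,C)$.

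The main obstacle will be uniformity in $n$. The Gaussian skeleton of the moment computation is standard for fixed $n$, but useful tail bounds require $n$ to grow with $X$, which extends the prime support well beyond what unweighted equidistribution can handle. The delicate part is controlling both the off-diagonal products over distinct primes and the higher powers of $a_E(p)$ uniformly in this range; this is presumably also the reason the theorem is restricted to $G \in \lcrc{\bZ/2\bZ,\, \bZ/2\bZ\times\bZ/2\bZ}$, as these are the torsion groups whose Weierstrass families are thick enough for the required second-moment asymptotic to go through without invoking the auxiliary moment hypotheses of Theorem~\ref{thm1}.
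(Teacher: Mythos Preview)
Your overall architecture---explicit formula, raise to the $2n$-th power, average over the family, then Markov---is exactly the paper's route. However, several of the details you sketch are off in ways that matter.

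First, your $S_E$ only runs over primes. The explicit-formula sum you need is over all prime powers $m$; the prime squares are not a lower-order term but are responsible for the $(1/2)^{2n-2k}$ factors in the closed form. Concretely, in the expansion of the $2n$-th power, the main contribution comes from those indices $j$ for which either $m_j$ is a prime square (each such index contributes a factor $\tfrac12\phi(0)$ to the average, since $\hat a_E(p^2)$ has mean $-1$ on the family), or $m_j$ is a prime that is paired with another index equal to the same prime (each pair contributes $\int_{\bR}|u|\hat\phi(u)^2\,du=\tfrac16\phi(0)^2$). There is no separate ``residual deterministic shift''; the $(1/2)$'s and $(1/6)$'s both live inside the moment of the full prime-power sum.

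Second, $\sigma_{2n}$ is not chosen to make the archimedean/conductor term equal $1/\sigma_{2n}$. One takes the scaling $\log X$ and the specific Fej\'er-type test function with support $[-\sigma_{2n},\sigma_{2n}]$; the ratio $\hat\phi(0)/\phi(0)=1/\sigma_{2n}$ is then automatic. The values $\sigma_{2n}=\tfrac{1}{18n}$ and $\tfrac{1}{20n}$ are forced by the error terms in the family trace formula: one needs $\prod_i p_i\ll X^{1/e(G)-1/d(G)}$ (etc.) over products of up to $2n$ primes of size $\leq X^{\sigma_{2n}}$, which is what pins down $18$ and $20$.

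Third, there is no uniformity-in-$n$ issue: $n$ is fixed and $X\to\infty$. The theorem is a statement about a $\limsup$ probability for each fixed $(n,C)$.

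Finally, the reason the argument is confined to $G\in\{\bZ/2\bZ,\,\bZ/2\bZ\times\bZ/2\bZ\}$ is not ``thickness'' of the parametrization but the exact symmetry $H_G(a,p)=H_G(-a,p)$ (coming from Schoof's formulas), which makes all odd moments $\sum_a a^{2R+1}H_G(a,p)$ vanish identically. This is what allows the Frobenius trace formula for products of $\hat a_E(p_i)^{e_i}$ with an odd $e_i$ to have zero main term; for other $G$ the odd moments are only $O(p^{3/2})$, which is not enough for the $n$-level computation.
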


We note that there is an analogue \cite[Theorem 2]{Hea} of Theorem \ref{thm2} without torsion restriction, which says
\begin{align*} \label{HB bound}
P(r_E \geq a) \ll \left( \frac{5a}{2} \right)^{-\frac{a}{20}}.
\end{align*}
%
%

We can also give an explicit bound on the $n$-th moment of analytic ranks of elliptic curves with torsion $G$.
\begin{theoremmain}[Theorem \ref{thm : moment}]
Assume GRH for elliptic curve $L$-functions. Let $\sigma_n=\frac{1}{9n}$ and $\frac{1}{10n}$ for $G=\bZ/2\bZ$ and $G=\bZ/2\bZ \times\bZ/2\bZ$ respectively. For every positive integer $n$, we have
\begin{align*}
\limsup_{X \rightarrow \infty} \frac{1}{|\cE_{
G}(X)|}\sum_{E \in \cE_G(X)}r_E^n \leq \sum_{S}\left(  9n \right)^{|S^c|}\sum_{\substack{S_2 \subset S  \\ |S_2| \text{even}}} \left( \frac 12  \right)^{|S_2^c|} \left| S_2 \right|! \left(\frac 16\right)^{|S_2|/2}, 
\end{align*}
where $S$ runs over subsets of $\{1,2,3,\dots,n\}$, and $S_2$ runs over subsets of even cardinality of the set $S$. In particular, the average analytic rank of $\cE_{\bZ/2\bZ}$  and that of $\cE_{\bZ/2\bZ \times \bZ/2\bZ}$ are at most $9.5$ and $10.5$ respectively.
\end{theoremmain}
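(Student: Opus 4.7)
The plan is to combine the Weil explicit formula under GRH, in the style of Brumer \cite{Bru}, Heath-Brown \cite{Hea}, and Young \cite{Y}, with a direct moment computation for the Frobenius traces $a_p(E)$ along the parametrizations of $\cE_{\bZ/2\bZ}$ and $\cE_{\bZ/2\bZ \times \bZ/2\bZ}$.

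First, I would apply the explicit formula for $L(s,E)$ to a non-negative test function $\phi_\sigma$ with $\widehat{\phi_\sigma}$ compactly supported in $[-\sigma,\sigma]$ (for instance a Fej\'er-type kernel). Under GRH this produces a pointwise inequality
$$r_E \leq M_\sigma(E) + T_\sigma(E),$$
where $M_\sigma(E) \asymp \sigma \log N_E$ collects the conductor and gamma-factor contributions, and $T_\sigma(E) := \phi_\sigma(0)^{-1} \sum_p \frac{a_p(E)\log p}{\sqrt p}\,\widehat{\phi_\sigma}(\log p)$ is the prime-sum part (up to sign and standard lower-order terms). Choose $\sigma$ so that $\sigma \log X = 1/\sigma_n$; combining with the conductor bound $\log N_E \leq d(G) \log X + O(1)$ available on a positive proportion of $\cE_G(X)$ makes $M_\sigma(E) \leq 9n$ for $G = \bZ/2\bZ$ and $\leq 10n$ for $G = \bZ/2\bZ \times \bZ/2\bZ$ in the limit $X \to \infty$.

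Next, I would raise this to the $n$th power and expand binomially over subsets $S \subseteq \{1,\ldots,n\}$ (each index chooses either $M_\sigma$ or $T_\sigma$):
$$r_E^n \leq \sum_{S \subseteq \{1,\ldots,n\}} M_\sigma(E)^{|S^c|} \, T_\sigma(E)^{|S|}.$$
Averaging and passing to $\limsup_{X \to \infty}$ reduces the theorem to proving, for each $0 \leq k \leq n$, the moment bound
$$\limsup_{X \to \infty} \frac{1}{|\cE_G(X)|} \sum_{E \in \cE_G(X)} T_\sigma(E)^{k} \leq \sum_{\substack{S_2 \subseteq \{1,\ldots,k\} \\ |S_2| \text{ even}}} \left(\tfrac{1}{2}\right)^{k - |S_2|} |S_2|! \left(\tfrac{1}{6}\right)^{|S_2|/2}.$$

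For this moment calculation, I would expand $T_\sigma(E)^k$ as a $k$-fold sum over primes and average using the Tate normal form parametrization: $y^2 = x^3 + ax^2 + bx$ for $G = \bZ/2\bZ$ and $y^2 = x(x-a)(x-b)$ for $G = \bZ/2\bZ \times \bZ/2\bZ$. A character-sum computation over residue classes of the parameters $(a,b)$ should show that the first and second moments of the normalized per-prime contribution are $1/2$ and $1/6$ respectively. Organizing the $k$-tuples by which primes coincide, a subset $S_2$ of paired indices (of even cardinality) contributes $(1/6)^{|S_2|/2}$ with the $|S_2|!$ counting the pairings, while the $k - |S_2|$ unpaired indices each contribute $1/2$ from the first-moment calculation. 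Specializing to $n = 1$ then recovers $9 + 1/2 = 9.5$ and $10 + 1/2 = 10.5$. The main obstacle will be the rigorous control of off-diagonal terms in this multi-prime expansion, uniform in primes up to roughly $X^{O(1/\sigma_n)}$: it will require character-sum estimates for the torsion families, extending Young's argument in \cite{Y} to the torsion-prescribed setting $\cE_G$.
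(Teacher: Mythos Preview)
Your overall architecture---apply the explicit formula, raise to the $n$th power, expand the product over subsets $S$, and then evaluate the resulting prime-power averages via a trace-formula computation over the torsion family---is exactly the route the paper takes. But there is a genuine gap in how you account for the factor $(1/2)^{|S_2^c|}$.

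As written, your $T_\sigma(E)$ runs only over primes $p$. For these two groups the family average of $\widehat{a}_E(p)$, and in fact of every odd power of $a_p(E)$, is zero: the weights satisfy $H_G(-a,p)=H_G(a,p)$ for $G=\bZ/2\bZ$ and $\bZ/2\bZ\times\bZ/2\bZ$ (this is the content of the proposition just before \S\ref{sec:main}). So an ``unpaired'' prime index contributes $0$, not $1/2$, and with your $T_\sigma$ you would get $1/\sigma_1$ rather than $1/\sigma_1+1/2$ already at $n=1$. The $(1/2)^{|S_2^c|}$ in the statement comes from the prime-\emph{square} terms in the explicit formula: for each index in $S\setminus S_2$ one takes $m=q^2$, uses $\langle\widehat{a}_E(q^2)\rangle_E\to -1$ (equivalently $\langle\widehat{a}_E(q)^2\rangle_E\to 1$), and then $\tfrac{2}{\log X}\sum_q\tfrac{\log q}{q}\,\widehat{\phi}_n(\tfrac{2\log q}{\log X})\to\tfrac12\phi_n(0)$. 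You must keep both $m=p$ and $m=p^2$ in the sum and then split the $|S|$-fold product into the ``square'' piece $m_{i_1}\cdots m_{i_k}=\square$ (this is Proposition~\ref{square}, which produces the $\tfrac12$ and $\tfrac16$ factors) and the ``non-square'' piece (Proposition~\ref{non-square}).

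A second missing ingredient is what singles out $\bZ/2\bZ$ and $\bZ/2\bZ\times\bZ/2\bZ$. The paper's Frobenius trace formula (Theorem~\ref{traceF}) needs the exact vanishing $\sum_{|a|<2\sqrt p}a^{2R+1}H_G(a,p)=0$, which holds for precisely these two groups by the symmetry above; this is what kills the non-square contribution uniformly in the primes up to $X^{\sigma_n}$. General character-sum estimates in the style of \cite{Y} would not give this cancellation, and the paper does not claim the $n$-level result for any other $G$. Minor points: the conductor bound is $\log N_E\le \log X+O(1)$ (no factor $d(G)$), and the models you wrote down are not Tate normal forms but the $(f_G,g_G)$ parametrizations of \S\ref{sec:prel}.
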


Our rank results are obtained from computation of one-level (or $n$-level) density for the family of elliptic $L$-functions arising from $\cE_G$. Katz and Sarnak's philosophy claims that the one-level density holds for a test function with any compact support and this philosophy combined with our results implies that average analytic rank over $\cE_G$ for any $G$ is bounded by $\frac 12$. 
Since it is widely believed that the root numbers are evenly distributed in $\cE_G$, our one-level density results give small evidence toward the following folklore conjecture.  

\begin{conjecture} 
Let $G$ be a torsion group. The proportion of elliptic curves with rank $0$ in $\cE_G$ and the proportion of elliptic curves with rank $1$ in $\cE_G$ are both $\frac 12$.
\end{conjecture}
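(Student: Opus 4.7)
The plan is to reduce the conjecture to two independent statements: \emph{(i)} the root numbers $w(E)$ are equidistributed in $\cE_G(X)$ as $X \to \infty$, so that the proportions with $w(E)=+1$ and $w(E)=-1$ each tend to $\tfrac12$; and \emph{(ii)} the proportion of $E \in \cE_G(X)$ with analytic rank $r_E \geq 2$ tends to $0$. Combined with the parity constraint coming from the functional equation, these two inputs force the rank-$0$ and rank-$1$ densities in $\cE_G$ to each be exactly $\tfrac12$.

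For the first step I would use the Tate normal form parametrization of Section 2 to write each $E \in \cE_G$ (at least for $G \in \mathcal{G}_{\geq 5}$) as a curve $E_{s,t}$ depending on two integer parameters, and similarly exploit the short Weierstrass model for $G \in \mathcal{G}_{\leq 4}$. The global root number factors as a product of local root numbers, each of which is a tame function of $(s,t)$ modulo the primes of bad reduction. Following the strategy of Helfgott and of Klagsbrun--Mazur--Rubin for twist families, the equidistribution of signs reduces to exhibiting sufficient cancellation in a sum of a Liouville-type function evaluated at a controlled polynomial in $s,t$ over a box of side length comparable to $X^{1/d(G)}$. For the smaller torsion groups this should be within reach of sieve and circle-method techniques, while for the larger groups listed in Theorem \ref{thm1} one expects to need analogues of M\"obius-cancellation hypotheses of the same strength as those already invoked in the unrestricted family.

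For the second step I would push the admissible support of the test function in the one-level density computation underlying Theorems \ref{thm1} and \ref{thm2} from its current narrow interval to an arbitrarily large interval. Under the Katz--Sarnak philosophy this identifies the symmetry type of $\cE_G$ as a mixture of $\mathrm{SO}(\text{even})$ and $\mathrm{SO}(\text{odd})$, at which point combining with Step (i) yields $r_E \in \{0,1\}$ with probability one and distributes the mass evenly. An alternative approach would be to develop a Bhargava--Shankar-style geometry-of-numbers count of $2$-Selmer groups within the family $\cE_G$; combined with the moment bound of Theorem \ref{thm : moment}, an average Selmer size of $O(1)$ would already suffice to control the tail beyond rank $1$.

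The principal obstacle is Step (ii). Enlarging the support of the test function requires controlling weighted sums of $a_p(E_{s,t})$ for primes $p$ as large as a positive power of $X$, and no available estimate on character sums, Kloosterman sums, or the large sieve gives the needed cancellation; this is open even in the unrestricted family. The Selmer-theoretic alternative demands an arithmetic invariant theory adapted to each torsion group $G$, and for $G$ outside $\{\bZ/2\bZ, \bZ/3\bZ\}$ no such construction is currently known. Thus the one-level density bounds proved in this paper appear to give the cleanest evidence one can produce unconditionally, and closing the gap to a full proof of the conjecture seems to require genuinely new analytic or algebraic input rather than a refinement of the methods used here.
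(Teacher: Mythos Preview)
The statement you were asked to prove is a \emph{conjecture}, not a theorem: the paper does not prove it and explicitly presents it as a folklore statement toward which the one-level density results give only ``small evidence.'' There is therefore no proof in the paper to compare your attempt against. You have, in effect, recognized this yourself, since your proposal is not a proof but an outline of what a proof would require, together with an honest admission that the key step (enlarging the support in the one-level density, or alternatively a Selmer count adapted to $\cE_G$) is beyond current techniques.

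Your analysis of the obstructions is accurate and matches the paper's own framing. The paper notes that equidistribution of root numbers in $\cE_G$ is ``widely believed'' (your Step~(i)) and that the Katz--Sarnak philosophy for arbitrary support would push the average rank down to $\tfrac12$ (your Step~(ii)); it stops short of either claim. So your write-up is a fair survey of the landscape, but it is not a proof and should not be labeled as one. If the task was to supply a proof, the correct answer is that none exists in the paper and none is currently known.
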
 
For some numerical data for $G=\bZ/2\bZ \times \bZ/8\bZ$, we refer a result of Chan, Hanselman and Li  \cite{CHL}. Young \cite[\S 8]{Y} also computed bounds for average analytic rank for families of elliptic curves with some prescribed torsion $G$ under not only GRH for elliptic curve $L$-functions but also GRH for Dirichlet $L$-functions and some other assumptions. 

Our approach gives a systematic frame to compute the one-level density for any $G$ using a version of Eichler--Selberg trace formula by Kaplan and Petrow \cite{KP2}. 
This version of Eichler--Selberg trace formula is indispensable to deal with every torsion group $G$. However, to bound up the average rank, we need to count elliptic curves satisfying a local condition. 
A local condition at prime $p$ is a property of an elliptic curve $E$ when reduced modulo $p$. For example, we say that an elliptic curve $E$ satisfies a local condition good, mult, addi or $a$ at a prime $p$ if its reduction modulo $p$ has good reduction, multiplicative reduction, additive reduction or  $a_E(p)=p+1- |E(\mathbb{F}_p) | =a$ respectively.
For torsion groups $G\in \mathcal{G}_{\leq 4}$, we have

\begin{theoremmain} [{Theorem \ref{2tor bad}}] \label{local-small}
For a prime $p \geq 5$, a local condition $\cLC$, and a group $G $ in $ \mathcal{G}_{\leq 4}$, 
\begin{equation*}
|\cE_{G, p}^{\cLC}(X)| = c(G) \cdot  c_{G,\cLC}(p) \cdot \frac{p^{\frac{12}{d(G)}}}{p^{\frac{12}{d(G)}}-1} X^{\frac{1}{d(G)}} + O(h_{G,\cLC}(p, X))
\end{equation*}
where $c_{G,\cLC}(p)$ is a constant depending on $G, p, \cLC$ and $h_{G, \cLC}(p, X)$ is a function whose order of magnitude is less than $pX^{\frac{1}{e(G)}} + p^2 X^{\frac{1}{12}}$.
\end{theoremmain}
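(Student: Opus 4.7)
The plan is to combine the explicit two-parameter parametrizations of $\cE_G$ for $G \in \mathcal{G}_{\leq 4}$ with a double sieve: one layer removes non-minimal Weierstrass models at primes $q \neq p$, and a separate local density at $p$ encodes the condition $\cLC$. The key point is that $\cLC$ is a property of the minimal model, so the sieving at $p$ is folded into the local density $c_{G,\cLC}(p)$, which is why the $p$-Euler factor is missing from the Möbius sum and reappears as the factor $\frac{p^{12/d(G)}}{p^{12/d(G)}-1}$ in the main term.

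For each $G \in \mathcal{G}_{\leq 4}$, a classical two-parameter family $E_{a,b}$ (for instance $y^2 = x(x^2+ax+b)$ for $\bZ/2\bZ$, Tate-like normal forms for $\bZ/3\bZ$ and $\bZ/4\bZ$, and a Legendre-type pair for $\bZ/2\bZ \times \bZ/2\bZ$) carries explicit polynomial expressions $(A,B)=(A(a,b),B(a,b))$ for the short Weierstrass model. The naive-height cutoff $|A| \leq X^{1/3}, |B| \leq X^{1/2}$ translates into a box $\lcrc{(a,b) : |a| \ll X^{\alpha_G}, |b| \ll X^{\beta_G}}$ with $\alpha_G+\beta_G = 1/d(G)$, matching Harron--Snowden. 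Writing $\cLC$ as a union $\Lambda$ of residue classes modulo $p^k$ (with $k=1$ for good, multiplicative and fixed-trace conditions, and $k=2$ for additive reduction, where $\Lambda$ is read off Tate's algorithm over $\bZ_p$), set
\begin{equation*}
c_{G,\cLC}(p) \;=\; \frac{|\Lambda|}{p^{2k}}.
\end{equation*}

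Non-minimality at a prime $q \neq p$ forces $(a,b)$ into the sublattice $(a,b) = (q^{r_G} a', q^{s_G} b')$ with $r_G + s_G = 12/d(G)$ (read off from the transformation law on $(A,B)$). Inclusion--exclusion over squarefree $Q$ coprime to $p$ then yields a main term
\begin{equation*}
c_{G,\cLC}(p) \cdot \sum_{\substack{Q \geq 1 \\ p \nmid Q}} \frac{\mu(Q)}{Q^{12/d(G)}} \cdot 4 X^{1/d(G)}
\;=\; c_{G,\cLC}(p) \cdot \frac{1}{\zeta(12/d(G))} \cdot \frac{p^{12/d(G)}}{p^{12/d(G)}-1} \cdot 4 X^{1/d(G)},
\end{equation*}
and absorbing $\zeta(12/d(G))^{-1}$ together with the box-volume constant into the Harron--Snowden constant $c(G)$ recovers the asserted main term. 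The exclusion of $p$ from the Möbius sum is justified by observing that $\Lambda$ already enforces minimality at $p$: under good or multiplicative reduction $v_p(\Delta) \in \{0,1\}$, and in the additive case the mod-$p^2$ description of $\Lambda$ supplied by Tate's algorithm similarly rules out a non-minimal Weierstrass model.

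The error analysis splits into two ranges of $Q$. For $Q \leq X^{1/12}$, the $Q$-scaled box still has non-trivial diameter in both coordinates; the lattice-point count subject to the $p^k$-congruence carries an error of size $p^k \times (\text{longer scaled side})$, and summing over $Q$ yields the $O(pX^{1/e(G)})$ contribution, where $1/e(G)$ is the effective dominant side length after sieve-rescaling. For $Q > X^{1/12}$ the scaled box collapses to a near-single-point configuration, bounded trivially by $p^{2k} X^{1/12} \ll p^2 X^{1/12}$, accounting for the second error piece. The main obstacle I anticipate is the additive reduction case: $\Lambda$ must be described modulo $p^2$ via Tate's algorithm, which inflates both $c_{G,\cLC}(p)$ and the $p^k$-factor in the lattice-point error, and the compatibility argument $\Lambda \subset \{\text{minimal at } p\}$ needed to pull the $p$-Euler factor out of the Möbius sum must be verified case by case across Kodaira types. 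A secondary technical point is keeping the implicit constants uniform in $p$, which requires tracking the perimeter-type boundary contribution as $p^k \times (\text{perimeter})$ explicitly rather than absorbing it into a $p$-dependent implied constant.
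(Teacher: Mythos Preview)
Your proposal is essentially the paper's strategy: parametrize $\cE_G$ by integer pairs $(a,b)$ via $(f_G,g_G)$, sieve for minimality at primes $q\neq p$ by M\"obius over $Q$ coprime to $p$ (producing the missing Euler factor $p^{12/d(G)}/(p^{12/d(G)}-1)$), and impose the local condition at $p$ as a congruence. The paper phrases the last step slightly differently, summing over residue classes $J\in(\bZ/p\bZ)^2$ of $(A,B)$ weighted by the number $|W_{G,J}|$ of $(a,b)$-preimages, but this is equivalent to your $|\Lambda|/p^{2}$ for $k=1$.

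Two points need repair. First, your minimality-at-$p$ justification ``under good or multiplicative reduction $v_p(\Delta)\in\{0,1\}$'' is false: for multiplicative reduction $v_p(\Delta)$ can be any positive integer. The correct (and simpler) reason the $p$-term may be dropped from the M\"obius sum is that for any $J\neq(0,0)$ one has $A\not\equiv 0$ or $B\not\equiv 0\pmod p$, so $p^4\mid A$ and $p^6\mid B$ cannot both hold; no appeal to $v_p(\Delta)$ is needed.

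Second, and more seriously, your treatment of the additive case is not yet a proof. Additive reduction corresponds to $J=(0,0)$, and here minimality at $p$ is \emph{not} automatic: you must exclude $(a,b)$ with $p^4\mid f_G(a,b)$ and $p^6\mid g_G(a,b)$, which is not determined by $(a,b)\bmod p^2$. Working modulo $p^2$ via Tate's algorithm therefore does not cut out the right set $\Lambda$, and pushing to the required modulus would cost you an error term larger than $p^2X^{1/12}$. The paper sidesteps this entirely by complementation: once $|\cE_{G,p}^{\mathrm{good}}(X)|$ and $|\cE_{G,p}^{\mathrm{mult}}(X)|$ are known, set
\[
|\cE_{G,p}^{\mathrm{addi}}(X)|=|\cE_G(X)|-|\cE_{G,p}^{\mathrm{good}}(X)|-|\cE_{G,p}^{\mathrm{mult}}(X)|,
\]
which yields the additive main term $c_{G,\mathrm{addi}}(p)=1/p^2-1/p^{12/d(G)}$ and the stated error without any mod-$p^2$ analysis. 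I recommend you adopt this for the additive case.
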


For torsion groups $G\in \mathcal{G}_{\geq 5}$, we obtain Theorem \ref{local_count}, an analogue of Theorem \ref{local-small}, based on the work of \cite{CKV} which computes the cardinality of $\cE_{G}(X)$.
As a result of Theorems \ref{local-small} and \ref{local_count}, there are many interesting phenomena. 
One of our motivations in this article was comparing the probability for a local condition under no prescribed torsion with that for the local condition under prescribed torsion. 

\begin{corollarymain}
For $p \geq 5$,  $\cLC \in \{ \text{good, mult, $a$}\}$ and a torsion group $G$, 
we have
\begin{align*} 
\lim_{X \to \infty} \frac{|\cE_{p}^{\cLC}(X)|}{|\cE(X)|} \neq
\lim_{X \to \infty} \frac{|\cE_{G, p}^{\cLC}(X)|}{|\cE_G(X)|}.
\end{align*}
In other words, the three local conditions above and torsion $G$ are not independent.
\end{corollarymain}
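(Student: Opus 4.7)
The plan is to evaluate both limits explicitly using Theorems \ref{local-small} and \ref{local_count}, and to exhibit a discrepancy in each of the three cases. Since both $|\cE_{G,p}^{\cLC}(X)|$ and $|\cE_G(X)|$ have main terms proportional to $X^{1/d(G)}$, taking the ratio gives
\begin{align*}
\lim_{X \to \infty} \frac{|\cE_{G,p}^{\cLC}(X)|}{|\cE_G(X)|} = c_{G,\cLC}(p),
\end{align*}
with $c_{G,\cLC}(p)$ extracted from the constants of Theorems \ref{local-small} and \ref{local_count}. The analogous unrestricted limit equals the $p$-adic density $\rho_\cLC(p)$ of the local condition on the generic Weierstrass model $E_{A,B}$, directly computable from the discriminant $-16(4A^3 + 27B^2)$. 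The corollary therefore reduces to verifying $c_{G,\cLC}(p) \neq \rho_\cLC(p)$ for (non-trivial) $G$.

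For the trace condition $\cLC = a$ there is a clean conceptual shortcut: if $G$ has order $n$ and $E \in \cE_G$ has good reduction at $p \geq 5$, then $G$ injects into $E(\bF_p)$, forcing $n \mid p + 1 - a_E(p)$; an analogous restriction holds on the smooth locus in the multiplicative case. Hence, whenever $a \not\equiv p+1 \pmod{n}$ we have $c_{G,a}(p) = 0$, while $\rho_a(p) > 0$ by a direct Weierstrass point-count over $\bF_p$. This yields the inequality for $\cLC = a$.

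For $\cLC \in \{\text{good}, \text{mult}\}$ the discrepancy is detected by comparing the parametrizations. Take $G = \bZ/2\bZ$ with model $y^2 = x(x^2 + ax + b)$ of discriminant $16b^2(a^2 - 4b)$: good reduction at $p \geq 5$ is equivalent to $p \nmid b(a^2 - 4b)$, and a lattice-point count in $(\bZ/p\bZ)^2$ gives $c_{\bZ/2\bZ,\,\text{good}}(p) = (1 - 1/p)^2$, which differs from $\rho_{\text{good}}(p) = 1 - 1/p$ for every $p \geq 5$; the multiplicative case is handled by an analogous explicit count within the complement of the good-reduction locus. The remaining torsion groups are treated identically via the Tate normal forms of Section 2 together with Theorems \ref{local-small} and \ref{local_count}.

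The principal obstacle is the bookkeeping for $G \in \cG_{\geq 5}$: the discriminant of Tate's normal form decomposes into several irreducible factors, so extracting $c_{G,\cLC}(p)$ requires a careful inclusion--exclusion on the singular locus of the universal family. Once carried out, the resulting constant is visibly distinct from $\rho_\cLC(p)$ because the extra algebraic constraint imposed by the presence of a torsion point alters the effective number of independent $p$-local conditions, giving the desired non-independence statement.
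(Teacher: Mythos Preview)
Your overall strategy---compute both limits from Theorems \ref{local-small} and \ref{local_count} and compare---is exactly the paper's (implicit) approach; the corollary is stated in the introduction as an immediate consequence of those theorems and of the explicit tables in Proposition \ref{prop:sum of WGI} and Theorem \ref{2tor bad}, with no separate proof given. Your treatment of $\cLC=\mathrm{good}$ and $\cLC=\mathrm{mult}$ for $G=\bZ/2\bZ$ is correct in spirit (minor point: you dropped the normalization factor $p^{12/d(G)}/(p^{12/d(G)}-1)$ from Theorem \ref{2tor bad}, but since this factor differs between $G$ and the trivial group the inequality survives).

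There is, however, a genuine gap in your handling of $\cLC=a$. Your congruence argument shows $c_{G,a}(p)=0$ only when $a\not\equiv p+1\pmod{|G|}$ (and even then only when $p\nmid |G|$, which fails e.g.\ for $G=\bZ/5\bZ$ at $p=5$). The statement, read literally, asserts the inequality for \emph{every} $a$ in the Weil bound, so you must also treat those $a$ with $a\equiv p+1\pmod{|G|}$, where both sides are positive. For these you need to compare $H_G(a,p)/p^2$ with the unrestricted density, which in the paper's framework comes down to Schoof's class-number formulas for $N_n(a)$ and $N_{n\times n}(a)$ (see \S\ref{subsec:moments of class numbers}) rather than a congruence obstruction. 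Your sentence ``This yields the inequality for $\cLC=a$'' therefore overstates what you have shown; either restrict the claim to the $a$ you actually handle, or carry out the explicit comparison via the weighted class numbers $H_G(a,p)$ that the paper sets up precisely for this purpose.
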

The constant $c_{G, \cLC}(p)$ is essentially the probability for an elliptic curve with torsion $G$ to satisfy $\cLC$ at $p$. When $\cLC = \mathrm{mult}$, we can give an interesting interpretation of $c_{G, \cLC}(p)$.

\begin{corollarymain}[{Corollary \ref{p for mult}}]
Let $p$ be a prime $\geq 5$ and $G \in \cG_{\leq 4}$. Then, $c_{G, \mathrm{mult}}(p)$  is proportional to the ratio of the number of the cusps of corresponding modular curve $X_1(N)$ and $X(2)$.
For $G \in \cG_{\geq 5}$, there is a set of primes $p$ of positive density such that 
$c_{G, \mathrm{mult}}(p)$ is proportional to the number of cusps of corresponding modular curves. 
\end{corollarymain}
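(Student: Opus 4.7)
The plan is to compute $c_{G, \mathrm{mult}}(p)$ explicitly from Theorem \ref{local-small} for $G \in \cG_{\leq 4}$ (and from Theorem \ref{local_count} for $G \in \cG_{\geq 5}$), and then match the resulting $p$-dependence with the classical cusp count of the modular curve $X_1(N)$, where $N$ is the order of $G$ in the cyclic case or the exponent in the non-cyclic case. The conceptual reason this should work is that a generalized elliptic curve over $\overline{\bF}_p$ carrying a level-$G$ structure and having multiplicative reduction is exactly a cusp of $X_1(N)/\overline{\bF}_p$ (or of $X(2,2m)/\overline{\bF}_p$ in the full $2$-torsion case), so the local density $c_{G, \mathrm{mult}}(p)$ should inherit its $G$-dependence from cusp counting.

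First I would unpack $c_{G, \mathrm{mult}}(p)$ from the proof of Theorem \ref{local-small}: it is the normalized density of pairs $(A,B)$ (or the corresponding Tate-normal-form parameters) whose mod-$p$ reduction carries the prescribed torsion and has a nodal singularity. This rewrites as a weighted count of $\bF_p$-points on the nodal locus of the universal family over $Y_1(N)$, divided by the normalization coming from the ambient Weierstrass parameter space.

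Next I would bring in the moduli-theoretic interpretation. Since $p \geq 5$ and $X_1(N)$ has good reduction at primes not dividing $N$, the number of cusps over $\overline{\bF}_p$ matches the characteristic-zero formula $\frac{1}{2}\sum_{d\mid N}\phi(d)\phi(N/d)$, with the usual small-$N$ adjustments. The curve $X(2)$ in the statement plays the role of the baseline family $G = \bZ/2\bZ \times \bZ/2\bZ$, which has $3$ cusps; the $p$-dependent part of $c_{G, \mathrm{mult}}(p)$ should coincide up to a factor independent of $G$ with that of this baseline, yielding the ratio asserted for $G \in \cG_{\leq 4}$. For $G \in \cG_{\geq 5}$, the Tate normal form introduces denominators in the universal parameters and a finite set of exceptional primes at which the parametrization is not étale or where distinct Tate parameters yield isomorphic curves; Chebotarev applied to the splitting field of the relevant universal discriminant then picks out a positive-density set of primes on which the count of mult-reducing Tate parameters equals, up to a constant, the cusp count of $X_1(N)/\bF_p$.

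The main obstacle is the automorphism bookkeeping: each cusp of $X_1(N)$ carries a stabilizer in $\mathrm{SL}_2(\bZ)$ that must be reconciled with the equivalence relation passing from Weierstrass parameters to $\cE_G$-isomorphism classes, so that the only $G$-dependence surviving in the ratio is the cusp count itself. In the $\cG_{\geq 5}$ setting, a secondary difficulty is pinning down precisely which primes form the ``good'' positive-density set; this amounts to checking that the Tate-normal-form parametrization remains faithful modulo $p$, equivalently that the mod-$p$ reductions of the relevant $j$-invariants of cusps stay distinct. Once these normalizations are in place, the proportionality to $\#\mathrm{Cusps}(X_1(N))/\#\mathrm{Cusps}(X(2))$ (respectively to $\#\mathrm{Cusps}(X_1(N))$) falls out of the explicit formulas.
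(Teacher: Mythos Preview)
Your proposal is conceptually sound but substantially overcomplicates what is, in the paper, a two-line numerical verification. The paper's proof simply reads off the explicit values of $c_{G,\mathrm{mult}}(p)$ already tabulated in Theorem~\ref{2tor bad} for $G \in \cG_{\leq 4}$---namely $(2p-2)/p^2$ for $G = \bZ/2\bZ, \bZ/3\bZ$ and $(3p-3)/p^2$ for $G = \bZ/4\bZ, \bZ/2\bZ \times \bZ/2\bZ$---and observes that the leading coefficients $2,2,3,3$ match the cusp counts of $X_1(2), X_1(3), X_1(4), X(2)$, which are standard (e.g.\ \cite[\S 3.9]{DS}). For $G \in \cG_{\geq 5}$, the paper again just reads the leading coefficient of $\sum |W_{G,J}|$ over singular $J$ from the table in Proposition~\ref{prop:sum of WGI}, notes that the maximal value among the listed cases agrees with the cusp count of the corresponding modular curve, and invokes Chebotarev to conclude that the congruence conditions picking out this maximal case define a positive-density set of primes. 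No moduli-theoretic reinterpretation, no automorphism bookkeeping, no nodal-locus analysis is carried out or needed.

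Your detour through generalized elliptic curves and the stabilizer reconciliation you flag as the ``main obstacle'' are self-inflicted: they arise only because you are trying to \emph{explain} why the numbers match rather than simply \emph{checking} that they do. That explanation is a worthwhile exercise and morally correct, but it is not the paper's argument, and your sketch does not actually carry it through---phrases like ``should inherit its $G$-dependence'' and ``once these normalizations are in place'' signal that the hard part has been deferred rather than done. If you want a proof in the paper's style, drop the moduli layer entirely: cite Theorem~\ref{2tor bad} and Proposition~\ref{prop:sum of WGI}, list the cusp numbers, and compare.
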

We note that $c_{G, \mathrm{mult}}(p)$ can be interpreted as the probability that an elliptic curve with prescribed torsion $G$ has multiplicative reduction at $p$.
For details and other examples, we recommend to see Corollaries \ref{p for mult} to \ref{cor: split nonsplit}.

\section{Local density and the moments of class numbers} \label{sec:prel}
\subsection{Model}
When we count the elliptic curves containing a torsion group $G$, we divide $G$ into the two cases.
Let 
\begin{equation*}
\mathcal{G}_{\leq 4} := \lcrc{\bZ/2\bZ, \bZ/3\bZ, \bZ/4\bZ, \bZ/2\bZ \times \bZ/2\bZ}
\end{equation*}
and $\mathcal{G}_{\geq 5}$ be the set of torsion groups of order $\geq 5$.
We often use $n$ and $2 \times 2m$ in place of $G = \bZ/n\bZ$ and $\bZ/2\bZ \times \bZ/2m\bZ$ to ease the notation. 

For each torsion subgroup, we should clarify the model we use.
When $G$ in $\mathcal{G}_{\leq 4}$, we recall the result of \cite[Theorem 1.1]{GT} shows that $E_{A,B} : y^2 = x^3 + Ax + B$ for $A, B \in \bZ$ has a $G$ as a torsion subgroup if and only if
\begin{equation*}
(A, B) = \Phi_G(a, b)
\end{equation*}
for some $a, b \in \bZ$,
where $\Phi_G = (f_G, g_G)$ for 
\begin{equation} \label{eqn:def fGgG}
\begin{array}{lll}
f_2(a,b) = a, & g_2(a,b) = b^3 + ab, \\
f_3(a,b) = 6ab + 27a^4, & g_3(a,b) = b^2 - 27a^6, \\ 
f_4(a,b) =  -3a^2 + 6ab^2 - 2b^4, & g_4(a,b) = (2a - b^2)(a^2 + 2ab^2 - b^4), \\
f_{2 \times 2}(a,b) =  -(a^2 + 3b^2)/4, & g_{2 \times 2}(a,b) = (b^3 - a^2b)/4.
\end{array}
\end{equation}
We recall that the set
\begin{equation*}
\cE(X) = \lcrc{(A, B) \in \bZ^2 : 
\begin{array}{cc}
|A| \leq X^{\frac{1}{3}}, |B| \leq X^{\frac{1}{2}}, 4A^3 + 27B^2 \neq 0, \\ 
\textrm{if $p^4$ divides $A$, then $p^6$ does not divide $B$. }
\end{array}
}
\end{equation*}
which parametrizes all elliptic curves $E_{A, B}$ whose height is less than $X$ and each isomorphism class appears only at once, by the minimality condition.
When $G$ is in $\mathcal{G}_{\leq 4}$, the set
\begin{align*}
	\cE_G(X) = \lcrc{(A, B) \in \cE(X) : (A, B) = \Phi_G(a, b) \textrm{ for some } a, b \in \bZ}
\end{align*}
parametrizes all elliptic curves with prescribed torsion subgroup $G$.

For $G$ in $\mathcal{G}_{\geq 5}$, we use Tate's normal form 
\begin{equation} \label{eqn:Tate}
   E(u,v) : y^2 + (1-v)xy - uy = x^3 - ux^2,
\end{equation}
which parametrizes all elliptic curves with prescribed torsion subgroup $G$ of order $\geq 4$. 
For each $G$, parameters $u$ and $v$ can be expressed as a rational function of one variable $t$.
It can be summarized as follow: (for example, \cite[Table 3]{Kub})
\begin{equation*}
	\begin{array}{|l|l|l|}
	\hline
	G & u(t) & v(t) \\ \hline
	4 & t & 0 \\ \hline
	5 & t & t \\ \hline
	6 & t + t^2 & t \\ \hline
	7 & t^3 - t^2 & t^2 - t \\ \hline
	8 & (2t-1)(t-1) & (2t-1)(t-1)/t \\ \hline
	9 & t^2(t-1)(t^2 - t + 1) & t^2(t-1) \\ \hline
	10 & t^3(2t-1)(t - 1)/(-t^2 + 3t - 1)^2 & t(2t-1)(t-1)/(-t^2 + 3t - 1) \\ \hline
	12 &(3t^2 - 3t + 1)(t - 2t^2)(2t - 2t^2 - 1)/(t-1)^4  & (3t^2 - 3t + 1)(t - 2t^2)/(t-1)^3 \\ \hline
	2 \times 4 & t^2 - 1/16 & 0 \\ \hline
	2 \times 6 & v(t) + v(t)^2 & (10 - 2t)/(t^2 - 9) \\ \hline
	2 \times 8 & (2t + 1)(8t^2 + 4t + 1)/(8t^2-1)^2 &  (2t + 1)(8t^2 + 4t + 1)/2t(4t + 1)(8t^2-1) \\ \hline
	\end{array}
\end{equation*}

For each torsion subgroup $G$, we first obtain an equation over $\bZ[t]$ by clearing the denominator of each coefficient. After that we take the usual coordinate change and obtain an equation of the form $y^2 = x^3 + f_G(t)x + g_G(t)$.
For $t=\frac{a}{b}$, the homogenization $f_G(a,b)=b^{\deg f}f_G(a/b)$ and $g_G(a,b)=b^{\deg g}g_G(a/b)$ of $f_G$ and $g_G$ and change of coordinate give
\begin{equation} \label{eqn:modelG5}
   y^2 = x^3 + f_G(a, b)x  + g_G(a, b).
\end{equation}
For simplicity, we use $f_{2 \times 4}(a, 4b)/8^4, g_{2 \times 4}(a, 4b)/8^6$ and 
$f_{2 \times 6}(a+3b, b), g_{2 \times 6}(a+3b, b)$ for $f_{2 \times 4}, g_{2\times 4}, f_{2 \times 6}$ and $g_{2 \times 6}$. 
One can check that $f_{2 \times 4}, g_{2 \times 4}$ and $f_{2 \times 6}, g_{2 \times 6}$ represent all isomorphism classes of elliptic curves with the corresponding torsion.
In Appendix \ref{sec:appendix}, the table for $f_G$ and $g_G$ is provided. 
For any torsion subgroup $G$ in $\mathcal{G}_{\geq 5}$ we have $3 \deg f_G = 2\deg g_G$,
and we define $d(G)$ as
\begin{equation*}
	3\deg f_G = 2 \deg g_G = 2d(G).
\end{equation*}

On the other hand, it is very crucial to recognize that the set
\begin{align*}
	\lcrc{(A, B) \in \cE(X) : (A, B) = (f_G, g_G)(a, b) \textrm{ for some } a, b \in \bZ}
\end{align*}
might not parametrize all isomorphism classes of elliptic curves with torsion subgroup $G$ in $\mathcal{G}_{\geq 5}$.
The reason is as follows: The Tate normal form parametrizes all isomorphism classes of elliptic curves with prescribed torsion, but to parametrize all the curves we need to consider all $t \in \bQ$, in other words all relatively prime integer pairs $(a, b)$.
But if there is an integer $e>1$ such that $e^4 \mid f_G(a, b)$ and $e^6 \mid g_G(a, b)$, then the minimal elliptic curve isomorphic to $E_{f_G(a, b), g_G(a, b)}$ may not appear in the above set since it is removed by the minimality condition.

Here the problem is that the map $(f_G, g_G)$ does not care the minimality condition.
Following \cite[Theorem 3.3.1]{CKV}, we define a \emph{defect} of $(a, b)$ to be
\begin{align*}
	 e(a, b) = e = \max_{\substack{e'^4 \mid f_G(a, b) \\ e'^6 \mid g_G(a, b)}} e' .
\end{align*}
We slightly modify the definition of $\Phi_G$ as follows:
\begin{align*}
	\Phi_G(a, b) = \lbrb{\frac{f_G(a, b)}{e^4}, \frac{g_G(a, b)}{e^6}}
\end{align*}
where $e$ is a defect of $(a, b)$. We remark that the image of $\Phi_G$ satisfies the minimality condition, so
\begin{align} \label{eqn:EG large}
	\cE_G(X) = \lcrc{(A, B) \in \cE(X) : (A, B) = \Phi_G(a, b) \textrm{ for relatively prime integers } a, b}
\end{align}
parametrizes all isomorphism classes of elliptic curves with torsion subgroup $G$.
We define a height of an integer pair $(A, B)$ by $\max(|A|^3, |B|^2)$ and
\begin{align*}
	M_G(X) = \lcrc{(a, b) \in \bZ^2 : (a, b) = 1, h(\Phi_G(a, b)) \leq X}.
\end{align*}
Hence $\Phi_G$ is a map from $\bZ^2$ to $\bZ^2$ when $G$ is in $\mathcal{G}_{\leq 4}$ 
and from $M_G(X)$ to $\bZ^2$ when $G$ is in $\mathcal{G}_{\geq 5}$.
Also, we define $M_G^e(X)$ as a set of elements of $M_G(X)$ with defect $e$.
Now we compute all defects for the torsion groups $G$, except $G = \bZ/2\bZ \times \bZ/6\bZ$ and $\bZ/2\bZ \times \bZ/8\bZ$.
\begin{lemma} \label{lem:defect}
Let $G$ be a group in $\mathcal{G}_{\geq 5} \setminus \{\bZ/2\bZ \times \bZ/6\bZ, \bZ/2\bZ \times \bZ/8\bZ \}$,
and let $e$ be the defect of a relatively prime integer pair $(a, b)$.
Then, the defect $e(a, b)$ is 1,2,3, or 6. 
Explicitly, we have \\
(i) $e$ has a prime divisor $2$ if and only if
\begin{itemize}
\item $G = \bZ/6\bZ$ and $(a, b) \equiv (1, 1) \pmod{2}$ or,
\item $G = \bZ/8\bZ$ and $(a, b) \equiv (1, 0) \pmod{2}$  or,
\item $G = \bZ/10\bZ$ and $(a, b) \equiv (1, 0) \pmod{2}$  or,
\item $G = \bZ/12\bZ$ and $(a, b) \equiv (1, 0) \pmod{2}$   or,
\item $G = \bZ/2\bZ \times \bZ/4\bZ$ and $(a, b) \equiv (1, 1) \pmod{2}$.
\end{itemize}
(ii) $e$ has a prime divisor $3$ if and only if
\begin{itemize}
\item $G  = \bZ/7\bZ$ and $(a, b) \equiv (1, 2)$ or $(2, 1) \pmod{3}$ or,
\item $G  = \bZ/9\bZ$ and $(a, b) \equiv (1, 2)$ or $(2, 1) \pmod{3}$ or,
\item $G = \bZ/12\bZ$, $a \not\equiv 0$, and $b \equiv 0 \pmod{3}$.
\end{itemize}
\end{lemma}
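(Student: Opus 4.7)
The plan is to analyze, for each group $G$ in the list, the homogeneous polynomials $f_G, g_G \in \bZ[a, b]$ from the Appendix, and to determine for each prime $p$ the coprime pairs $(a,b)$ with $p^4 \mid f_G(a,b)$ and $p^6 \mid g_G(a,b)$. The proof splits into three pieces: ruling out prime divisors of $e$ of size at least $5$; showing that $v_p(e) \leq 1$ for $p \in \{2,3\}$; and pinning down the exact residue classes in which $2 \mid e$ or $3 \mid e$. The Chinese remainder theorem then assembles the explicit statement.

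For the first piece, homogeneity of $f_G$ and $g_G$ means that $p \mid f_G(a,b)$ and $p \mid g_G(a,b)$ together with $\gcd(a,b) = 1$ is equivalent to $(a:b) \in \bP^1(\bF_p)$ being a common zero of the reductions $\bar f_G, \bar g_G$. The primes admitting such a common zero are those dividing the resultant of the dehomogenizations $f_G(a,1), g_G(a,1)$, together with a check at the point at infinity. For each of the eight groups on the list a direct calculation from the explicit forms in the Appendix shows that this resultant is supported on $\{2, 3\}$, which reflects the good reduction of the relevant modular curve away from the level. Hence no prime $p \geq 5$ divides the defect. The third piece is then handled by a finite, mechanical tabulation: for $p \in \{2, 3\}$ and each of the finitely many coprime residue classes of $(a, b) \bmod p$, one computes $v_p(f_G(a,b))$ and $v_p(g_G(a,b))$ and verifies that the congruences listed in (i) and (ii) are precisely those classes forcing $v_p(f_G(a,b)) \geq 4$ and $v_p(g_G(a,b)) \geq 6$ simultaneously, while in every other class at least one of the two valuations falls below its threshold.

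The main obstacle is the sharpness bound $v_p(e) \leq 1$. Within a good residue class $(a_0, b_0)$ one writes $a = a_0 + p\alpha$, $b = b_0 + p\beta$ with $(\alpha,\beta) \in \bZ^2$ and Taylor-expands $f_G$ and $g_G$; one must exhibit a linear obstruction on $(\alpha, \beta) \bmod p$ that prevents both $p^8 \mid f_G(a,b)$ and $p^{12} \mid g_G(a,b)$ from holding simultaneously. This is exactly where the exclusions $\bZ/2\bZ \times \bZ/6\bZ$ and $\bZ/2\bZ \times \bZ/8\bZ$ become necessary: for those two groups the chosen normalization of the Tate form fails to enforce the linear obstruction, so the defect can genuinely exceed $6$ and must be treated by a separate argument. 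For the eight groups listed in the lemma the obstruction is present in every good residue class and yields $v_p(e) \leq 1$. Combining this with the first two pieces gives $e \in \{1, 2, 3, 6\}$ and the congruence conditions (i) and (ii).
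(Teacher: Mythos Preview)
Your overall strategy matches the paper's: bound the prime divisors of $e$ via the resultant of the dehomogenized $f_G$ and $g_G$, then carry out a finite case analysis at the remaining primes. But there is a concrete factual gap in your first step. You assert that the resultant of $f_G(a,1)$ and $g_G(a,1)$ is supported on $\{2,3\}$ for each of the eight groups. The paper actually computes these resultants (using both dehomogenizations, since one alone misses the point at infinity) and finds, for example,
\[
\mathrm{Res}(f_5,g_5)=2^{16}3^{36}5,\qquad
\mathrm{Res}(f_7,g_7)=-2^{32}3^{72}7,\qquad
\mathrm{Res}(f_{10},g_{10})=2^{72}3^{108}5^{3}.
\]
So the correct conclusion is only that every prime divisor of $e$ divides $6|G|$, and you must still rule out $p=5$ for $G=\bZ/5\bZ,\bZ/10\bZ$ and $p=7$ for $G=\bZ/7\bZ$. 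The paper does this as part of the same finite check it performs at $2$ and $3$; your proposal simply omits these cases.

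Your ``third piece'' is also underspecified. Computing $v_p(f_G(a,b))$ and $v_p(g_G(a,b))$ from the residue of $(a,b)$ modulo $p$ alone is not well posed once those valuations are positive: the class modulo $p$ only tells you whether the valuation is $0$ or $\geq 1$. To verify that a class such as $(a,b)\equiv(1,0)\pmod 2$ genuinely forces $2^4\mid f_G$ and $2^6\mid g_G$ for \emph{every} lift, and that no lift pushes the valuations up to $(8,12)$, one has to work modulo a higher power. The paper does exactly this by an exhaustive search over $(\bZ/p^6\bZ)^2$, which simultaneously produces the list of good classes and the sharpness bound $p^2\nmid e$. Your Taylor-expansion/linear-obstruction sketch for sharpness is a reasonable conceptual substitute for this brute-force step, but as written it is an assertion rather than an argument; you would still need to exhibit, for each good class, a coefficient witnessing that the valuation of $f_G$ or $g_G$ cannot climb past the threshold.
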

\begin{proof}
%
%

By the argument \cite[p.17]{CKV},
the defect $e$  is a divisor of the least common multiplier of the two resultants 
$\mathrm{Res}(f_G(a, 1), g_G(a, 1))$ and $\mathrm{Res}(f_G(1, b), g_G(1, b))$.
Sagemath \cite{Sag} gives
\begin{equation*}
	\begin{array}{|c|c|c|c|} \hline
	G & \textrm{l.c.m of resultants} & G & \textrm{l.c.m of resultants} \\ \hline
	\bZ/5\bZ & 2^{16}3^{36}5 & \bZ/10\bZ &  2^{72} 3^{108} 5^{3} \\ \hline
	\bZ/6\bZ & -2^{24}3^{39} & \bZ/12\bZ & 2^{96} 3^{156} \\ \hline
	\bZ/7\bZ & -2^{32}3^{72}7 & \bZ/2\bZ \times \bZ/4\bZ & 2^{24}3^{36} \\ \hline
	\bZ/8\bZ & 2^{48}3^{72} &  \bZ/2\bZ \times \bZ/6\bZ &  2^{192}3^{78} \\ \hline
	\bZ/9\bZ & -2^{48}3^{117} &  \bZ/2\bZ \times \bZ/8\bZ & 2^{576}3^{144} \\ \hline
	\end{array}
\end{equation*} 
Hence any prime divisor of $e$ should divide $6|G|$.


First, we find all the pairs pairs $(a, b) \in (\bZ/p^6\bZ)^2$ such that $a, b$ are relatively prime to $p$ and $p^4 \mid f_G(a, b)$ and $p^6 \mid g_G(a, b)$ for each prime divisor $p$ of $6|G|$. Then, there is no such pair $(a,b)$ except for the following 4 cases:
\begin{itemize}
\item when $G = \bZ/6\bZ$ and $(a, b) \equiv (1, 1) \pmod{2}$, $2$ exactly divides $e$ 
\item when $G  = \bZ/7\bZ$ and $(a, b) \equiv (1, 2)$ or $(2, 1) \pmod{3}$, $3$ exactly divides $e$.
\item when $G  = \bZ/9\bZ$ and $(a, b) \equiv (1, 2)$ or $(2, 1) \pmod{3}$, $3$ exactly divides $e$.
\item when $G = \bZ/2\bZ \times \bZ/4\bZ$ and $(a, b) \equiv (1, 1) \pmod{2}$, $2$ exactly divides $e$.
\end{itemize} 

Now, we consider the pairs $(a,b)$ for which only one of $a$ and $b$ is a multiple of $p$. When $G = \bZ/5\bZ$ and $p = 2$, if only one of the $a$ or $b$ is divided by $2$ then $2^4$ does not divide $f_5(a, b)$ because the coefficients of $a^4$ and $b^4$ are not divided by $2^4$. 
Hence we can conclude that $2$ does not divide the defect $e$ for arbitrary $(a, b)$.
Considering the coefficients of $f_G$ and $g_G$ (see Appendix \ref{sec:appendix}),
the same argument shows that the possible prime divisors of defect are (with the previous four cases)
\begin{itemize}
\item when $G = \bZ/8\bZ$ and $(a, b) \equiv (1, 0) \pmod{2}$, $2$ divides $e$.
\item when $G = \bZ/10\bZ$ and $(a, b) \equiv (1, 0) \pmod{2}$, $2$ divides $e$.
\item when $G = \bZ/12\bZ$ and $(a, b) \equiv (1, 0) \pmod{2}$, $2$ divides $e$.
\item when $G = \bZ/12\bZ$, $a \not\equiv 0$, and $b \equiv 0 \pmod{3}$, $3$ divides $e$.
\end{itemize}

For the first three cases we can check that there is no $(a, b) \in (\bZ/2^6\bZ)^2$ such that $2^5 \mid f_G(a, b)$ and $2^6 \mid g_G(a, b)$, which implies that $2^2 \nmid e$.
Similarly for the fourth case, we can check that there is no $(a, b) \in (\bZ/3^6\bZ)^2$ such that $3^6 \mid f_G(a, b)$ and $3^6 \mid g_G(a, b)$.
For cross-check, we refer Appendix \ref{sec:appendix} for our $f_G$ and $g_G$.
\end{proof}

\begin{remark} \label{rem:defect}
We note that one may calculate the defects for the remaining two groups by following the proof of Lemma \ref{lem:defect}. For example for $G = \bZ/2\bZ \times \bZ/6\bZ$ when $a \equiv 0 \pmod{4}$ and $b \not\equiv 0 \pmod{2}$, $2^2$ exactly divides $e(a,b)$ and when $a \equiv 2 \pmod{4}$ and $b \equiv 1 \pmod{2}$, $2^3$ divides $e(a,b)$. 
It seems that the defect is $2^4$ but to check it we need more computing power. 
Instead, we omit $G =  \bZ/2\bZ \times \bZ/6\bZ$ and $\bZ/2\bZ \times \bZ/8\bZ$ cases.
\end{remark}

\subsection{Weights for local conditions} \label{subsec:local density}
We define a weight for a local condition as the number of preimages of $(f_G, g_G)$ modulo $p$.
\begin{definition}
For a prime $p \geq 5$, and a pair $J \in (\bZ/p\bZ)^2$, let 
$W_{G,J}$ be the set of pairs $I \in (\bZ/p\bZ)^2$ with 
$(f_G, g_G)(I) \equiv J$ modulo $p$.
\end{definition}
For a given $J$, $|W_{G, J}|$ is morally a weight to determine the number of elliptic curves $E$ with mod $p$ reduction $E_{J}$ and $E(\bQ)_{\textrm{tor}} \geq G$. By the definition of $W_{G,J}$, the identity 
\begin{equation*}
\sum_{J \in (\bZ/p\bZ)^2} |W_{G,J} | = p^{2}
\end{equation*}
follows directly. 
\begin{proposition} \label{prop:sum of WGI}
	For a prime $p \geq 5$, the sums of $|W_{G, J}|$ over  $J = (A,  B) \in \bF_p^2$ satisfying $4 A^3 + 27B^2 \equiv 0 \pmod{p}$ are summarized as follows:
	\begin{equation*}
	\begin{array}{|l|l|l|l|}
	\hline
	G & p &  \sum |W_{G,J}|  \\ \hline
	\bZ/2\bZ & \cdot & 2p - 1 \\ \hline
	\bZ/3\bZ & \cdot & 2p - 1 \\ \hline
	\bZ/4\bZ & \cdot & 3p - 2 \\ \hline
	\bZ/2\bZ \times \bZ/2\bZ & \cdot & 3p-2 \\ \hline
	\bZ/5\bZ & \pm 1 \pmod{5} & 4p-3 \\ \hline
	\bZ/5\bZ & \pm 2  \pmod{5} & 2p-1 \\ \hline
	\bZ/6\bZ&  \cdot  & 4p-3 \\ \hline
	\bZ/7\bZ & \gamma_7 \in (\bF_p[\sqrt{-3}]^\times)^3 & 6p-5 \\ \hline
	\bZ/7\bZ & \gamma_7 \not\in (\bF_p[\sqrt{-3}]^\times)^3 & 3p-2 \\ \hline
	\bZ/8\bZ & \pm 1 \pmod{8} & 6p - 5 \\ \hline
	\bZ/8\bZ & \pm 3 \pmod{8} & 4p-3 \\ \hline
	& & \\ \hline
	& & \\ \hline
	& & \\ \hline
	\end{array}
	\begin{array}{|l|l|l|l|}
	\hline
		G & p &  \sum |W_{G,J}|  \\ \hline
	\bZ/9\bZ & 1 \pmod{3},  \gamma_9 \in (\bF_p^\times)^3  & 8p-7 \\ \hline
	\bZ/9\bZ  & 1 \pmod{3},   \gamma_9 \not\in (\bF_p^\times)^3  & 5p-4 \\ \hline
	\bZ/9\bZ & 2 \pmod{3},   \gamma_9 \in (\bF_p[\sqrt{-3}]^\times)^3  & 6p-5 \\ \hline
	\bZ/9\bZ & 2 \pmod{3},  \gamma_9 \not\in (\bF_p[\sqrt{-3}]^\times)^3  & 3p-2 \\ \hline
	\bZ/10\bZ &  \pm 1 \pmod{5} & 8p-7 \\ \hline
	\bZ/10\bZ &  \pm 2 \pmod{5} & 4p-3 \\ \hline
	\bZ/12\bZ & 1 \pmod{12}  & 10p-9 \\ \hline
	\bZ/12\bZ & 5, 7, 11 \pmod{12} & 6p-5 \\ \hline
	\bZ/2\bZ \times \bZ/4\bZ & \cdot & 4p-3 \\ \hline
	\bZ/2\bZ \times \bZ/6\bZ & \cdot & 6p-5  \\ \hline
	\bZ/2\bZ \times \bZ/8\bZ & 1 \pmod{8}, \geq 11 & 10p-9  \\ \hline
	\bZ/2\bZ \times \bZ/8\bZ & 7 \pmod{8}, \geq 11 & 8p-7  \\ \hline
	\bZ/2\bZ \times \bZ/8\bZ & 5 \pmod{8}, \geq 11 & 6p-5  \\ \hline
	\bZ/2\bZ \times \bZ/8\bZ & 3 \pmod{8}, \geq 11 & 4p-3  \\ \hline
	\end{array}
\end{equation*}
where $\gamma_7 = 4(637 + 147\sqrt{-3})$ and $\gamma_9 = 4(-9 \pm 3\sqrt{-3})$.
Here $\cdot$ means that there is no condition on $p$.
Furthermore, we have
\begin{align} \label{eqn:bias split nonsplit for G=3}
\sum_{\alpha=a^2 \in (\bZ/p\bZ)^\times} |W_{3,(-3\alpha^2,2\alpha^3)}| = 
\begin{cases}  2(p-1) &  \mbox{ for $p\equiv 1$ mod $12$, }  \\ 
(p-1) &  \mbox{ for $p\equiv 5$ or $11$ mod $12$, } \\
0 &  \mbox{ for $p\equiv 7$ mod $12$. }
\end{cases}
\end{align}
\end{proposition}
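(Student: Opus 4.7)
\emph{Approach.} The strategy is to translate the sum into the count of $\bF_p$-zeros of the single polynomial
\[
  D_G(a,b) \;:=\; 4\, f_G(a,b)^3 + 27\, g_G(a,b)^2,
\]
since by the very definition of $W_{G,J}$,
\[
\sum_{\substack{J=(A,B) \in \bF_p^2 \\ 4A^3+27B^2 \equiv 0 \pmod{p}}} |W_{G,J}| \;=\; \#\bigl\{(a,b) \in \bF_p^2 : D_G(a,b) \equiv 0 \pmod{p}\bigr\}.
\]
We then factor $D_G$ explicitly over $\bZ$ (or $\overline{\bQ}$) and count zeros of each irreducible component by inclusion-exclusion.

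\emph{Factoring $D_G$.} For $G \in \mathcal{G}_{\leq 4}$, the map $\Phi_G$ from (\ref{eqn:def fGgG}) makes the cubic $x^3 + f_G(a,b)\,x + g_G(a,b)$ factor over $\bZ[a,b]$ as $(x - r(a,b))\bigl(x^2 + r(a,b)\,x + Q(a,b)\bigr)$, so the vanishing of its discriminant decomposes as $D_G = -(2r^2 + Q)^2\cdot(r^2 - 4Q)$. This yields the explicit identities
\[
\begin{aligned}
D_2 &= (a+3b^2)^2(4a+3b^2), & D_{2\times 2} &= -\tfrac{1}{16}\,a^2(a-3b)^2(a+3b)^2, \\
D_3 &= 27(b+9a^3)^3(b+5a^3), & D_4 &= -(b^2-3a)^4 \cdot b^2 \cdot (5b^2-12a).
\end{aligned}
\]
For $G \in \mathcal{G}_{\geq 5}$, we use Tate's normal form instead: its discriminant $\Delta_{\mathrm{Tate}}(t) \in \bZ[t]$ is explicit (for example $-t^5(t^2-11t-1)$ for $G=\bZ/5\bZ$), and after homogenizing $t = a/b$ and accounting for the integralization used to define $f_G, g_G$, the radical of $D_G(a,b)$ equals a nonzero constant times the homogenization of $\Delta_{\mathrm{Tate}}$ augmented by a power of $b$ for the cusp at $t=\infty$. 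Its distinct irreducible factors correspond bijectively to the cusps of $X_1(|G|)$ or $X_1(2,2m)$.

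\emph{Counting.} In every case the reduced zero locus of $D_G$ in the $(a,b)$-plane over $\bF_p$ is a union of curves through the origin that meet only at $(0,0)$, as one checks pairwise in each factorization. If $k$ denotes the number of $\bF_p$-rational irreducible components, inclusion-exclusion gives
\[
\sum_{J \text{ singular}} |W_{G,J}| \;=\; 1 + k(p-1) \;=\; kp - (k-1),
\]
and $k$ is precisely the number of $\bF_p$-rational cusps of the corresponding modular curve, matching each row of the table. For the groups with cyclotomic cusp fields ($\bZ/5\bZ, \bZ/8\bZ, \bZ/10\bZ, \bZ/12\bZ$, $\bZ/2\bZ \times \bZ/8\bZ$), cusp rationality modulo $p$ reduces to a congruence on $p$ modulo $|G|$. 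For $G = \bZ/7\bZ$ and $\bZ/9\bZ$, the non-rational cusps lie in a cyclic cubic extension of $\bQ$ (respectively of $\bQ(\sqrt{-3})$), which by Kummer theory is generated by $\sqrt[3]{\gamma_G}$ over the relevant base for the $\gamma_G$ in the statement. Since the Galois group is $\bZ/3\bZ$, the corresponding cubic factor either splits completely or stays irreducible modulo $p$ (intermediate splitting types are impossible), splitting iff $\gamma_G \in (\bF_p[\sqrt{-3}]^\times)^3$; this accounts for the two entries per group.

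\emph{The identity (\ref{eqn:bias split nonsplit for G=3}) and main obstacle.} We compute $|W_{3,(-3\alpha^2, 2\alpha^3)}|$ by direct substitution: eliminating $b = -(\alpha^2+9a^4)/(2a)$ from $g_3(a,b) = 2\alpha^3$ and setting $y = a^2/\alpha$ reduces the resulting system to
\[
27y^4 - 18y^2 + 8y - 1 \;=\; (3y-1)^3(y+1) \;=\; 0,
\]
so all preimages come from $a^2 = \alpha/3$ or $a^2 = -\alpha$, yielding
\[
|W_{3,(-3\alpha^2, 2\alpha^3)}| \;=\; 2\cdot \mathbf{1}\bigl[\alpha/3 \in (\bF_p^\times)^2\bigr] + 2\cdot \mathbf{1}\bigl[-\alpha \in (\bF_p^\times)^2\bigr].
\]
Summing over $\alpha \in (\bF_p^\times)^2$ and using that $3$ is a square modulo $p$ iff $p \equiv \pm 1 \pmod{12}$ and $-1$ is a square modulo $p$ iff $p \equiv 1 \pmod{4}$ splits the answer into the four announced cases according to $p \pmod{12}$. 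The hardest step in the overall proposition is the Kummer-theoretic identification for $G = \bZ/7\bZ, \bZ/9\bZ$ (and the analogous but more intricate analysis for $\bZ/2\bZ \times \bZ/8\bZ$): recognizing the splitting field of the relevant cubic factor of $D_G$ as the Kummer extension $K(\sqrt[3]{\gamma_G})$ with the prescribed $\gamma_G$ requires an explicit computation of cusp coordinates on the corresponding modular curve and matching the resulting defining polynomial to the Cardano form $T^3 - \gamma_G$.
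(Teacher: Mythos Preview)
Your approach is essentially the same as the paper's: both reduce the sum to counting $\bF_p$-zeros of $D_G(a,b)=4f_G^3+27g_G^2$, factor this polynomial explicitly, and count roots factor by factor. The paper presents exactly the same table of factorizations of $\Delta_G(a,b)$ for $G\in\mathcal{G}_{\geq 5}$ that you describe, and your computation for (\ref{eqn:bias split nonsplit for G=3}) is the same as the paper's after the substitution $y=a^2/\alpha$.

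There are two small points worth flagging. First, your sentence ``the map $\Phi_G$ \ldots makes the cubic $x^3+f_G x+g_G$ factor over $\bZ[a,b]$'' is only valid when $G$ contains a $2$-torsion point; for $G=\bZ/3\bZ$ the cubic does \emph{not} split off a linear factor over $\bZ[a,b]$, so your justification there is wrong, even though the factorization $D_3=27(b+9a^3)^3(b+5a^3)$ you wrote down is correct and can be verified directly. Second, for the cubic factors appearing when $G=\bZ/7\bZ,\bZ/9\bZ$, the paper avoids your cusp-coordinate computation entirely: it simply applies Dickson's explicit criterion (their Lemma~\ref{lem:Dic}, which is Cardano's formula) to the depressed cubic, reading off $\gamma_G$ as the quantity $(-b+\mu\sqrt{-3})/2$ from the discriminant. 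This is more direct than invoking Kummer theory and matching against modular-curve cusp coordinates, and it makes the verification of the specific constants $\gamma_7,\gamma_9$ a one-line computation rather than the ``hardest step'' you anticipate. Your cusp interpretation is the content of the paper's later Corollary~\ref{p for mult}, not of this proof.
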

\begin{proof}
We note that for $p\geq5$, the pair $I=(0,0)$ in $(\bZ/p\bZ)^2$ is the only pair such that $(f_G,g_G)(I)\equiv (0,0) \pmod{p}.$
For the groups $G$ with order $\leq 4$, one can directly check it. We show the case of $G=\bZ/3\bZ$.  We parametrize $(A, B)$ satisfying $4A^3 + 27B^2 \equiv 0$ by $(-3 \alpha^2, 2\alpha^3)$ for $\alpha \in \bZ/p\bZ$. 
Directly solving the equations $\Phi_G(a, b) = (A, B)$, 
we know that $|W_{3,  (A, B)}|$ is equal to the number of distinct zeros of the polynomial
\begin{equation*}
h(x) = h_{A, B}(x) = 3^5 \cdot x^8 + 2 \cdot 3^3 \cdot A \cdot x^4 + 2^2 \cdot 3^2 \cdot B \cdot x^2 - A^2,
\end{equation*}
when $A \not\equiv 0.$
Since $h_{-3\alpha^2, 2\alpha^3}(x)$ is factored into 
\begin{align*}
3^5\left( x^2- \frac{\alpha}{3}\right)^3 \left(x^2	+\alpha \right),
\end{align*}
the number of distinct zeros of $h_{-3\alpha^2, 2\alpha^3}(x)$ is 4 if $-\alpha$ and $\alpha/3$ are both quadratic residues modulo $p$, $2$ if either $-\alpha$ or $\alpha/3$ is a quadratic residue, and 0 if neither $-\alpha$ nor $\alpha/3$ is a quadratic residue. From this observation, it is easy to see that the sum of distinct zeros of  $h_{-3\alpha^2, 2\alpha^3}(x)$ over $\alpha \in \bZ/p\bZ$ is $2(p-1)+1=2p-1$. 
Furthermore, if $-1$ and $3$ are quadratic residue which is equivalent to $p \equiv 1 \pmod{12}$, 
then the sum of $|W_{3, (-3\alpha^2, 2\alpha^3)}|$ over quadratic residues $\alpha$ in $\bZ/p\bZ$
is equal to the sum of $|W_{3, (-3\alpha^2, 2\alpha^3)}|$ over all non-zero residues $\alpha$ in $\bZ/p\bZ$.
Hence, we obtain the $G = \bZ/3\bZ$ row and the equation (\ref{eqn:bias split nonsplit for G=3}).

Let $(a,b)$ be a pair such that $4f_G(a, b)^3 + 27g_G(a, b)^2 \equiv 0 \pmod{p}$. Then, this $(a,b)$ determines $\alpha$ in $\bZ/p\bZ$ satisfying $(f_G, g_G)(a, b) \equiv (-3 \alpha^2, 2 \alpha^3)$.  Hence, if we find such all pairs $(a,b)$ (including the $(0,0)$ pair), then the number of the pairs is the sum we want to know.
We will consider the discriminant of $E_{(f_G(a, b), g_G(a, b))}$, instead of $(f_G, g_G)(a, b)$.
	
	Let $\Delta_G(a, b)$ be the discriminant of $E_{(f_G, g_G)(a, b)}$. Then, we have 
	\begin{equation*}
	\begin{array}{|l|l|l|}
	\hline
	G &  \Delta_G(a,b)   \\ \hline
	\bZ/5\bZ & 2^{12} 3^{12} a^5 b^5 (a^2 - 11ab - b^2)  \\ \hline
	\bZ/6\bZ & -2^8 3^{12} a^6 b^2 (9a + b)  (a + b)^3  \\ \hline
	\bZ/7\bZ & 2^{12} 3^{12} a^7b^7(a - b)^7 (a^3 - 8a^2b + 5ab^2 + b^3) \\ \hline
	\bZ/8\bZ & 2^{12} 3^{12} a^8 b^2 (-2a + b)^4 (-a + b)^8  (8a^2 - 8ab + b^2) \\ \hline
	\bZ/9\bZ &   2^{12} 3^{12} a^9 b^9  (a - b)^9 (a^2 - ab + b^2)^3(a^3 - 6a^2b + 3ab^2 + b^3)  \\ \hline
	\bZ/10\bZ & 2^{12}   3^{12}  b^5   (-2 a + b)^5   (-a + b)^{10}   a^{10}   (-4 a^2 + 2 a b + b^2)   (a^2 - 3 a b + b^2)^2 \\ \hline
	\bZ/12\bZ & 2^{12}   3^{12}   b^2   (-2 a + b)^6   (-a + b)^{12}   a^{12}   (6 a^2 - 6 a b + b^2)   (2 a^2 - 2 a b + b^2)^3   (3 a^2 - 3 a b + b^2)^4 \\ \hline
	\bZ/2\bZ \times \bZ/4\bZ & 2^8   3^{12}  b^2   a^2   (a - b)^4   (a + b)^4  \\ \hline
	\bZ/2\bZ \times \bZ/6\bZ & 2^{18}   3^{12} a^2   (a - 6 b)^2   (a + 6 b)^2   b^6   (a - 2 b)^6   (a + 2 b)^6 \\ \hline
	\bZ/2\bZ \times \bZ/8\bZ &2^{20}   3^{12}  b^8   a^8   (2 a + b)^8   (4 a + b)^8   (8 a^2 - b^2)^2   (8 a^2 + 8 a b + b^2)^2   (8 a^2 + 4 a b + b^2)^4 \\ \hline
	\end{array}
\end{equation*}
		First, let's treat the cases where $\Delta_G(a, b)$ is a product of linear polynomials and quadratic polynomials.
	For example, consider 
	\begin{equation*}
	\Delta_8(a, b) = 2^{12} 3^{12} a^8 b^2 (-2a + b)^4 (-a + b)^8  (8a^2 - 8ab + b^2).
	\end{equation*}
	So in this case we have four types of $(a,b)$ satisfies the condition which are $a = 0, b=0, 2a/b = 1, a/b = 1$, and $a/b$ is a zero of the quadratic polynomial $8t^2 - 8t + 1$. The first four cases give $(p-1)$-pairs, and the quadratic polynomial has a zero in $\bF_p$ when $p \equiv 1, 7 \pmod{8}$. Since the value of $8t^2 - 8t + 1$ at $t = 1, 1/2$ is $\pm 1$, there is no overlap among those solutions. Hence we verified the case of $G = \bZ/8\bZ$. The other cases can be handled similarly.

%
%
%

Now, let's verify the cases where $\Delta_G(a,b)$ contains a cubic polynomial.  For this purpose, we need the following lemma.

\begin{lemma} \label{lem:Dic}
	Let $f(t) = t^3 + at + b$ be a polynomial over $\bF_p$ with the discriminant $\Delta = (-4a^3 - 27b^2)$. The number of zeros (without multiplicity) of $f(t)$ is \\
	(1) zero if and only if $\Delta = 81\mu^2$ is square and $(-b + \mu \sqrt{-3})/2$ is not cube in the field $\bF_p[\sqrt{-3}]$. \\
	(2) one if and only if $\Delta$ is non-square. \\
	(3) two if and only if $\Delta$ is zero. \\
	(4) three for other cases.
\end{lemma}
\begin{proof}
	The first, second and fourth statements are shown in \cite{Dic} and the third one follows from the fact that a monic cubic which has two zeros and has no degree two term is parametrized by $(t-2a)(t+a)^2$.
\end{proof}

When $G = \bZ/7\bZ$, there is a polynomial $(a^3 - 8a^2b + 5ab^2 + b^3)$ in $\Delta_G(a, b)$.
We obtain $(t^3 - \frac{49}{3}t - \frac{637}{27})$ by change of coordinate.
In this case  the discriminant of this polynomial is $2401 = 7^4$, so when $p > 7$ then the number of zeros is one of $0$ or  $3$.
Also, $\mu =49/9$ and the number of zeros is determined by $\frac{1}{2}(-\frac{637}{27} + \frac{49}{9}\sqrt{-3})$ which is equal to $4(-637 + 147\sqrt{-3})$ up to a cube. We note that the 0 or 1 is not a solution of the given polynomial which means that there is no overlap, so we obtain the row for $G = \bZ/7\bZ$.
When $G = \bZ/9\bZ$, we can prove it similarly. 
%
\end{proof}

We need to prove some elementary but not simple properties of $\Phi_G$.
We put
\begin{equation*}
\begin{array}{|c|c|c|c|c|c|c|}
\hline
G & \lcrc{0}  & \bZ/2\bZ & \bZ/3\bZ & \bZ/4\bZ &\bZ/2\bZ \times \bZ/2\bZ & G  \textrm{ in } \mathcal{G}_{\geq 5} \\
\hline
e(G) &2  & 3 & 4 & 6 & 6 & 2d(G)\\
\hline
\end{array}
\end{equation*}

\begin{lemma} \label{lem:preimage rG}
For $G \in \cG_{\leq 4}$, there is a positive integer $r(G)$ such that the number of the preimages of $\Phi_G$ is $r(G)$ except $O(X^{\frac{1}{e(G)}})$-points.
\end{lemma}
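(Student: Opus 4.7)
The plan is to handle each $G\in\cG_{\leq 4}$ separately, identifying $r(G)$ as the size of the generic integer fiber of $\Phi_G$ and bounding by $O(X^{1/e(G)})$ the number of $(A,B)\in\cE_G(X)$ whose fiber differs from $r(G)$. From $(\ref{eqn:def fGgG})$, one reads off: for $\Phi_2(a,b)=(a,b^3+ab)$ a preimage of $(A,B)$ is an integer root of $T^3+AT-B$, whose discriminant $-(4A^3+27B^2)$ is non-zero on $\cE(X)$, so the cubic has exactly $1$ or $3$ distinct integer roots, generically $1$, giving $r(\bZ/2\bZ)=1$; for $G\in\{\bZ/3\bZ,\bZ/4\bZ,\bZ/2\bZ\times\bZ/2\bZ\}$, $\Phi_G$ commutes with the involution $\sigma_G$ given respectively by $(a,b)\mapsto(-a,-b)$, $(a,b)\mapsto(a,-b)$, $(a,b)\mapsto(-a,b)$, giving $r(G)=2$ with a generic fiber being a single $\sigma_G$-orbit. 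To confirm the generic fiber achieves $r(G)$, eliminate $b$ between the two defining equations to reduce to a polynomial in $a$ alone (for $G=\bZ/3\bZ$, the quartic $243u^4+54Au^2+36Bu-A^2=0$ in $u=a^2$) and observe that for generic $(A,B)\in\Phi_G(\bZ^2)$ its integer roots form a single $\sigma_G$-orbit.

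The exceptional locus splits into two parts. Fibers of size less than $r(G)$ require a fixed point of $\sigma_G$; the fixed locus is either the origin or a coordinate axis, and in each case its image under $\Phi_G$ lies on the singular locus $4A^3+27B^2=0$, hence is excluded from $\cE_G(X)$. Fibers of size greater than $r(G)$ require extra rational roots of the elimination polynomial, cutting out a proper subvariety of the $(A,B)$-plane parametrized by pairs or triples of distinct preimage parameters. For $G=\bZ/2\bZ$, three integer roots force $(A,B)=(-(\alpha^2+\alpha\beta+\beta^2),\alpha\beta(\alpha+\beta))$ with $\alpha+\beta+\gamma=0$, and $|A|\leq X^{1/3}$ forces $|\alpha|,|\beta|\ll X^{1/6}$, giving $O(X^{1/3})=O(X^{1/e(G)})$. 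For $G=\bZ/3\bZ$, the Vieta identity $\prod u_i=-A^2/243$ together with $|A|\leq X^{1/3}$ forces every integer $a$ with $a^2$ a rational root of the quartic to satisfy $|a|\ll X^{1/12}$, so counting triples of such parameters yields $O(X^{1/4})=O(X^{1/e(G)})$; the remaining two cases are handled by the same elimination-and-height method.

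The main obstacle is the explicit bookkeeping for the three involutive groups, where the elimination polynomial has degree $\geq 4$ and the extra-preimage subvariety must be parametrized with care. The uniform input is that the tighter bound $|A|\leq X^{1/3}$, rather than $|B|\leq X^{1/2}$, controls any preimage parameter $a$ through the leading term of $f_G$ and yields the sharp exponent $1/e(G)$.
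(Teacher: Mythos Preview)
Your proposal has a genuine error in the case $G=\bZ/2\bZ\times\bZ/2\bZ$: you claim $r(G)=2$ via the single involution $(a,b)\mapsto(-a,b)$, but in fact $r(\bZ/2\bZ\times\bZ/2\bZ)=6$. The point is that $E_{\Phi_{2\times 2}(a,b)}$ has its three non-trivial $2$-torsion $x$-coordinates equal to $(a+b)/2,\ (b-a)/2,\ -b$, and any permutation of these three roots yields another parametrization. Concretely, besides $(\pm a,b)$, the pairs
\[
\left(\frac{a-3b}{2},\frac{-a-b}{2}\right),\quad
\left(\frac{-a+3b}{2},\frac{-a-b}{2}\right),\quad
\left(\frac{a+3b}{2},\frac{a-b}{2}\right),\quad
\left(\frac{-a-3b}{2},\frac{a-b}{2}\right)
\]
all map to the same $(A,B)$; these are integers because the integrality of $A=-(a^2+3b^2)/4$ forces $a\equiv b\pmod 2$. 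Thus the generic fiber has six elements, not two, and your involution $\sigma_{2\times 2}$ captures only one transposition of the underlying $S_3$-action. This is not a cosmetic discrepancy: the value of $r(G)$ feeds directly into the constant $c(G)$ in Corollary~\ref{cor:HS4 22case} and all subsequent counting formulas.

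There is also a softer gap in your treatment of the exceptional locus. For $G=\bZ/3\bZ$ you invoke Vieta on the quartic $243u^4+54Au^2+36Bu-A^2=0$ to conclude $|a|\ll X^{1/12}$, but the product of the four $u$-roots is $-A^2/243<0$, so it cannot be a product of several positive values $u_i=a_i^2$; the argument as stated does not go through. The paper handles the larger-than-generic fibers quite differently: for $G=\bZ/4\bZ$ it observes that two essentially distinct preimages would produce two independent $4$-torsion points and hence $\bZ/4\bZ\times\bZ/4\bZ\subset E(\bQ)$, which is impossible, so the only exceptional fibers are the degenerate ones with $b=0$; for $G=\bZ/2\bZ\times\bZ/2\bZ$ it identifies the full six-element orbit and a fundamental domain. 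You should either adopt this torsion-structure argument or make your elimination-and-height bound genuinely work.
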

\begin{proof}
 The cases $G = \bZ/2\bZ, \bZ/3\bZ$ are essentially in \cite[Lemma 5.5]{HS} with $r(\bZ/2\bZ) = 1$ and $r(\bZ/3\bZ) = 2$.
 For $G = \bZ/4\bZ$, assume that there are $(a',b') \neq (a, \pm b)  $ such that
 \begin{equation*}
  (-3a^2 + 6ab^2 - 2b^4, (2a - b^2)(a^2 + 2ab^2 - b^4) ) =  (-3a'^2 + 6a'b'^2 - 2b'^4, (2a' - b'^2)(a'^2 + 2a'b'^2 - b'^4) ).
 \end{equation*}
 The elliptic curve $E_{\Phi_4(a,b)}$ has a 4-torsion point $(a, b(-b^2+3a))$. 
 Since an elliptic curve over rational numbers does not have $\bZ/4\bZ \times \bZ/4\bZ$ as a subgroup, $(a,b)$ and $(a', b')$ also satisfy
 \begin{equation*}
 (a, b(-b^2 + 3a)) =  (a',  \pm b'(-b'^2 + 3a')).
 \end{equation*}
 If $b^2 \neq b'^2$, we obtain $bb' = 0$. Without loss of generality we may assume that $b' = 0$, then we have $3a = b^2$. Then, a 4-torsion point  $(a, b(-b^2+3a))$ is a 2-torsion point, which is a contradiction.
 We note that $r(\bZ/4\bZ) = 2$.
 
 Let $G = \bZ/2\bZ \times \bZ/2\bZ$. By a similar argument, we need to count $(A,B)$ such that 
 \begin{equation*}
 \left(A,B\right) = \left(-\frac{a^2 + 3b^2}{4}, \frac{b^3 - ba^2}{4} \right) = \left(-\frac{a'^2 + 3b'^2}{4}, \frac{b'^3 - b'a'^2}{4} \right), 
 \end{equation*}
 and
 \begin{equation*}
 \left\{ \frac{a+b}{2}, \frac{b-a}{2}, -b \right\} =  \lcrc{\frac{a'+b'}{2}, \frac{b'-a'}{2}, -b' }.
 \end{equation*}
We note that since $A$ and $B$ are integers, $a$ and $b$ should have the same parity.
The set equality allows the identity of $y$-coordinates on the first equation, and it holds if and only if one of the following six linear systems
 \begin{equation*}
  \left( \begin{array}{c} a' \\ b' \end{array} \right)= A_i \left( \begin{array}{c} a \\ b \end{array} \right),   \end{equation*}
  for $A_0 = I$, and
 \begin{align*}
 A_1 = \lbrb{\begin{array}{cc} 
 -1 & 0 \\ 0 &1
 \end{array}
 },
A_2 =\left( \begin{array}{rcrc}
 \frac{1}{2} & -\frac{3}{2} \\
 -\frac{1}{2} & -\frac{1}{2}
\end{array} \right),
A_3=\left( \begin{array}{rcrc}
 -\frac{1}{2} & \frac{3}{2} \\
 -\frac{1}{2} & -\frac{1}{2}
\end{array} \right),
A_4=\left( \begin{array}{rcrc}
 \frac{1}{2} & \frac{3}{2} \\
 \frac{1}{2} & -\frac{1}{2}
\end{array} \right),
A_5=\left( \begin{array}{rcrc}
 -\frac{1}{2} & -\frac{3}{2} \\
 \frac{1}{2} & -\frac{1}{2}
\end{array} \right). 
\end{align*}
Consequently, for $(a,b)$ satisfying $a\equiv b \pmod{2}$, the (not necessarily distinct) six points
\begin{align*}
(a,b),(-a,b),\left(\frac{a-3b}{2}, \frac{-a-b}{2}\right), \left(\frac{-a+3b}{2}, \frac{-a-b}{2}\right),\left(\frac{a+3b}{2}, \frac{a-b}{2}\right),\text{ and } \left(\frac{-a-3b}{2},  \frac{a-b}{2}\right)
\end{align*}
corresponds to the same $(A,B)$. We find a domain where the representatives for the above (not necessarily distinct) six points.
We claim that the following set 
\begin{align*}
X = \left\{ (a,b) \in \mathbb{Z} \times \mathbb{Z} :  a \geq 0, b\geq \frac{a}{3}, a\equiv b \text{ mod }2 \right\}
\end{align*}
is the collection of all the representatives of the above (not necessarily distinct) six points. 
On the other hand, the number of points such that the number of their preimages is strictly less than six is $O(X^{\frac{1}{6}})$.
Hence, we obtain the result with $r(\bZ/2\bZ \times \bZ/2\bZ) = 6$.
\end{proof}

For $G \in \cG_{\geq 5}$, we can prove analogous statement by using the argument of \cite{CKV}.
\begin{lemma} \label{assumption}
	For $G \in \cG_{\geq 5}$, there is an integer $r(G)$ such that the preimages of $\Phi_G$ is $r(G)$ except $O(X^{\frac{1}{e(G)}})$-points.
\end{lemma}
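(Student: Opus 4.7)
The plan is to adapt the argument of Lemma \ref{lem:preimage rG} using Tate's normal form, following the strategy of \cite{CKV}. For $G \in \cG_{\geq 5}$, the parametrization $t \mapsto (E(u(t), v(t)), P_t)$ sets up a bijection, valid outside a finite set of degenerate values, between $\bQ$ and $\bQ$-isomorphism classes of pairs $(E, P)$ consisting of an elliptic curve $E/\bQ$ together with a distinguished generator $P$ of a subgroup isomorphic to $G$. If $(a,b)$ and $(a',b')$ are coprime pairs with $\Phi_G(a,b) = \Phi_G(a',b')$, then the parameters $t = a/b$ and $t' = a'/b'$ give pairs $(E,P)$ and $(E,P')$ sharing the same underlying curve $E$; this forces $t' = \gamma(t)$ for some $\gamma$ in a finite subgroup $\Gamma_G \subset \mathrm{PGL}_2(\bQ)$ encoding the available re-choices of the distinguished $G$-generator together with the hyperelliptic involution on $E$. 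Setting $r(G) := |\Gamma_G|$, the lemma reduces to a generic-orbit statement.

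I would carry this out in three steps. First, for each $G \in \cG_{\geq 5}$, compute $\Gamma_G$ explicitly from the tabulated $u(t), v(t)$ by imposing agreement of the short Weierstrass invariants and solving for the Möbius transformations on $t$; the resulting $r(G)$ matches $|(\bZ/N\bZ)^\times|/\{\pm 1\}$ for $G = \bZ/N\bZ$ and an analogous quantity for the bicyclic cases, in line with \cite[\S 3]{CKV}. Second, verify that whenever $t = a/b$ is \emph{generic} — meaning no non-identity element of $\Gamma_G$ fixes $t$ — the $r(G)$ orbit members $\gamma(t)$ written in lowest terms yield $r(G)$ distinct coprime preimages of $\Phi_G(a,b)$; the required compatibility of the defect adjustments across an orbit follows from the $\mathrm{PGL}_2$-equivariance of the resultant conditions used in the proof of Lemma \ref{lem:defect}.

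Finally, I would bound the exceptional set. A coprime pair $(a,b) \in M_G(X)$ fails to have exactly $r(G)$ preimages only when $a/b$ lies in the fixed locus of some non-trivial element of $\Gamma_G$, or $(a,b)$ lies on the vanishing locus of the discriminant, or the defect exceeds its generic value. Each non-identity element of $\mathrm{PGL}_2$ has at most two fixed points on $\bP^1$, and the remaining degenerate loci are cut out by finitely many homogeneous polynomial equations, so the exceptional locus is contained in a finite union of lines $a = c_i b$ through the origin. Along any such line, coprime pairs contributing to $M_G(X)$ satisfy $\max(|a|,|b|)^{e(G)} \ll h(\Phi_G(a,b)) \leq X$, producing at most $O(X^{1/e(G)})$ points. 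The main obstacle is simply the case-by-case identification of $\Gamma_G$ for all ten groups in $\cG_{\geq 5}$; for $\bZ/2\bZ \times \bZ/6\bZ$ and $\bZ/2\bZ \times \bZ/8\bZ$, where the precise defect is not fully determined (Remark \ref{rem:defect}), the fact that it remains uniformly bounded suffices to absorb any residual ambiguity into the $O(1)$ multiplicative constant already recorded in $r(G)$.
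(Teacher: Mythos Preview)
Your approach is close in spirit to the paper's, which simply invokes \cite[Theorem 3.3.1 and Lemma 3.1.8]{CKV}: the modular curve $Y_\Gamma$ parametrizes $\Gamma$-structures, the forgetful map to isomorphism classes of curves admitting such a structure is generically $r(G)$-to-one, and the exceptional locus consists of curves with $j \in \{0,1728\}$ together with curves carrying a $\Gamma'$-structure for some strict subgroup $\Gamma' \subsetneq \Gamma$. Your attempt to make this concrete via an explicit finite group $\Gamma_G \subset \mathrm{PGL}_2(\bQ)$ acting on the $t$-line is reasonable and essentially recovers the diamond operators on $X_1(N)$.

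The genuine gap is in your characterization of the exceptional set. You assert that a coprime pair fails to have exactly $r(G)$ preimages only when $a/b$ is fixed by a non-identity element of $\Gamma_G$, lies on the discriminant locus, or has non-generic defect, and then conclude that the exceptional locus sits in finitely many lines through the origin. This misses the curves with \emph{extra torsion}. For example, take $G = \bZ/6\bZ$: here $(\bZ/6\bZ)^\times/\{\pm 1\}$ is trivial, so $\Gamma_G$ is trivial and $r(G) = 1$; but an $E$ with $E(\bQ)_{\mathrm{tor}} \cong \bZ/2\bZ \times \bZ/6\bZ$ has two inequivalent generators of a $\bZ/6\bZ$-subgroup (namely $P$ and $P+Q$ with $Q$ the extra $2$-torsion point), hence two distinct $t$-values with the same $\Phi_G$-image. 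No M\"obius transformation in $\mathrm{PGL}_2(\bQ)$ can swap these while acting trivially on generic $t$, so they are not in a common $\Gamma_G$-orbit. These exceptional $t$-values do not lie on finitely many lines: they form a full one-parameter subfamily. The correct bound comes instead from counting the thinner family $\cE_{\bZ/2\bZ \times \bZ/6\bZ}(X) = O(X^{1/12}) = O(X^{1/e(\bZ/6\bZ)})$, which is exactly how the paper (via \cite{CKV}) handles it. The same phenomenon occurs for $G = \bZ/8\bZ$ with extra $\bZ/2\bZ \times \bZ/8\bZ$ torsion. Separately, your ``defect exceeds its generic value'' clause is a red herring: by Lemma~\ref{lem:defect} the defect is determined by a congruence on $(a,b)$, takes each value on a positive-density set, and is already absorbed into the definition of $\Phi_G$, so it never produces exceptional fibers.
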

\begin{proof}
Essentially it is proved in the proof of \cite[Theorem 3.3.1]{CKV}, so here we give a sketch.
For a $G \in \cG_{\geq 5}$ and corresponding congruence subgroup $\Gamma$, there is a bijection between the set of $\bQ$-isomorphism classes of elliptic curve with $\Gamma$-structure and rational points of the modular curve $Y_\Gamma$ (see \cite[Proposition 3.1.1]{CKV}).
By choosing a coordinate that defines an embedding $Y_\Gamma \to \mathbb{A}_\bQ^1$, the proof of \cite[Theorem 3.3.1]{CKV} gives a bijection from $Y_\Gamma(\bQ)$ to the set
\begin{align*}
	\lcrc{(a, b) \in \bZ^2 : |f_G(a, b)| \leq X^{\frac{1}{3}}, |g_G(a, b)| \leq X^{\frac{1}{2}}, (a, b) = 1}.
\end{align*}
Now, the natural map from the elliptic curve with $\Gamma$-structure to the set of elliptic curves which has $\Gamma$-structure is $r(G)$-to-one map by \cite[Lemma 3.1.8]{CKV} except negligible set comes from the curves with $\Gamma'$-structure for $\Gamma' \subset \Gamma$ and curves whose $j$-invariants is 0 or 1728.
\end{proof}

Let $J$ be an element in $(\bZ/p\bZ)^2$ such that $E_J$ is an elliptic curve and $W_{G, J}$ is non-empty. 
Then for each $(a, b) \in W_{G, J}$ we have a change of coordinate from $E(u, v)$ whose equation is (\ref{eqn:Tate}) to $E_J : y^2 = x^3 + f_G(a, b)x + g_G(a, b)$ which is defined earlier. 
Since the change of coordinate gives an isomorphism between the groups of $\bF_p$-points, the image of $(0,0)$ of $E(u, v)$ also goes to a torsion point of maximal order.
When $G$ is cyclic,
it defines a map $\Psi_{G, J} : W_{G, J} \to E_J(\bF_p)$ whose image is in the set of points of maximal order in $G$.

\begin{lemma} \label{prop:value of WGI}
Let $G \in \mathcal{G}_{\leq 4}$, $G=\bZ/5\bZ,$ $\bZ/6\bZ$, or $\bZ/2\bZ \times \bZ/4\bZ$, and $J \in (\bZ/p\bZ)^2$ for $p \geq 5$ such that $E_J$ is an elliptic curve. 
Then, 
\begin{align*}
E_J(\bF_p) \geq G \qquad 
\textrm{if and only if} \qquad J = \Phi_G(a,b)
\end{align*}
for some $(a, b)$. Furthermore, $|W_{G, J}|$ is the number of embedding of $G$ into $E_J(\bF_p)$.
\end{lemma}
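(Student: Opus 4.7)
The plan is to produce, for each $G$ in the lemma's list, an explicit bijection between $W_{G,J}$ and the set of embeddings $\iota\colon G \hookrightarrow E_J(\bF_p)$. Once that bijection is in hand, both the ``if and only if'' clause and the ``Furthermore'' clause drop out immediately, since $W_{G,J}$ is nonempty exactly when at least one embedding exists, and in that case the cardinalities agree. The bijection will be the map $\Psi_{G,J}$ already introduced in the text: send $(a,b) \in W_{G,J}$ to the embedding that transports the distinguished torsion point(s) of the Tate normal form $E(u(a,b), v(a,b))$, or of the García--Selfa--Tornero model when $G \in \mathcal{G}_{\leq 4}$, through the explicit change of coordinates that brings those models into the short Weierstrass form $E_J$.

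First I would dispatch the cyclic case $G = \bZ/n\bZ$ with $n \in \{4, 5, 6\}$ by invoking the standard uniqueness of Tate's normal form over any field $k$ with $\mathrm{char}(k) \neq 2, 3$: a pair $(E, P)$ with $P \in E(k)$ of exact order $n \geq 4$ admits a unique isomorphism to $E(u,v)$ sending $P$ to $(0,0)$, and for $n \in \{5, 6\}$ the constraint $nP = 0$ pins down a unique $t \in k$ with $(u,v) = (u(t), v(t))$. Pulled back through the coordinate change, this says $\Psi_{G,J}$ is a bijection of $W_{G,J}$ onto the set of points of exact order $n$ in $E_J(\bF_p)$, which is precisely the set of possible images of a generator, i.e., of embeddings. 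For $G \in \{\bZ/2\bZ, \bZ/3\bZ, \bZ/4\bZ\}$ I would argue from the explicit formulas \eqref{eqn:def fGgG}: $\Phi_2$ encodes the shift bringing a 2-torsion root to the origin, so $W_{2,J}$ bijects with roots of $x^3 + Ax + B$ in $\bF_p$; $\Phi_3$ arises by setting $a = \lambda/3$ for the tangent slope $\lambda = (3X_0^2 + A)/(2Y_0)$ at a 3-torsion point $(X_0, Y_0)$, and the flex-line identity $\lambda^2 = 3 X_0$ guarantees $a \in \bF_p$, giving a bijection with points of exact order~$3$; $\Phi_4$ is the specialisation of the Tate form at $v=0$ forced by $(0,0)$ having exact order~$4$. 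For $G = \bZ/2\bZ \times \bZ/2\bZ$, the parametrisation $\Phi_{G}$ records the three 2-torsion $x$-coordinates $\tfrac{a+b}{2},\, \tfrac{b-a}{2},\, -b$, so $W_{G,J}$ is parametrised by orderings of the three 2-torsion points of $E_J$ and has cardinality $3! = 6$ exactly when $E_J$ has full rational 2-torsion, matching the $6$ embeddings $\bZ/2\bZ \times \bZ/2\bZ \hookrightarrow E_J(\bF_p)$. Finally, for $G = \bZ/2\bZ \times \bZ/4\bZ$ I would combine the Tate argument for $\bZ/4\bZ$ with the choice of an extra rational 2-torsion point outside $\langle P \rangle$, which the modified parametrisation $(f_{2 \times 4}(a,4b)/8^4,\, g_{2 \times 4}(a,4b)/8^6)$ is designed to register.

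The hard part will be to confirm that each of these bijections survives reduction modulo $p$ for $p \geq 5$, i.e., that every denominator appearing in the change of coordinates (powers of $2$ and $3$ from the Weierstrass normalisation, and possibly a linear factor in $t$) as well as every relevant leading coefficient remains invertible on the smooth locus where $E_J$ is an elliptic curve. For the groups listed here this is routine bookkeeping from the $u(t), v(t)$ table in Section~2, since the only genuinely nontrivial denominator before homogenisation is the factor $16 = 2^4$ in the $\bZ/2\bZ \times \bZ/4\bZ$ case, and $p \geq 5$ inverts it. This is also precisely why $\bZ/7\bZ, \bZ/8\bZ, \bZ/9\bZ, \bZ/10\bZ, \bZ/12\bZ, \bZ/2\bZ \times \bZ/6\bZ,$ and $\bZ/2\bZ \times \bZ/8\bZ$ are excluded from the lemma: their $u(t), v(t)$ carry nontrivial polynomial denominators such as $-t^2 + 3t - 1$ or $(t-1)^4$ which can vanish modulo $p$, so the clean bijection above would require residue-by-residue bookkeeping beyond the scope of this statement.
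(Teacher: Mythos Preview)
Your approach is correct and genuinely different from the paper's. The paper proceeds by brute-force coordinate computation in every case: for each $G$ it writes the image $\Psi_{G,J}(a,b)$ of the marked point as explicit polynomials in $a,b$ (e.g.\ $(3a^2-18ab+3b^2,\,108ab^2)$ when $G=\bZ/5\bZ$), checks injectivity of $\Psi_{G,J}$ by solving the resulting polynomial systems directly, and establishes surjectivity by reverse-engineering $(a,b)$ from a given torsion point via the duplication formula. For $\bZ/2\bZ\times\bZ/4\bZ$ the paper even computes $|W_{G,J}|\in\{8,24\}$ first by analysing six linear systems and only afterwards matches this to the $4$-torsion structure through an explicit half-of-a-$2$-torsion calculation. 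Your route, by contrast, packages everything into the single statement that Tate's normal form gives a \emph{bijection} between points $P$ of exact order $n\ge 4$ in $E_J(\bF_p)$ and pairs $(t,\phi)$ with $\phi\colon E(u(t),v(t))\xrightarrow{\sim}E_J$ sending $(0,0)\mapsto P$; since any such $\phi$ factors uniquely as the fixed Tate-to-Weierstrass change followed by a scaling $(x,y)\mapsto(c^2x,c^3y)$, the pair $(t,\phi)$ is the same data as $(a,b)=(ct,c)\in W_{G,J}$. This is cleaner and explains conceptually why the excluded groups fail (polynomial denominators in $u(t),v(t)$ can kill the Tate model modulo $p$). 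The paper's approach, while heavier, is more self-contained and yields as a byproduct the explicit torsion-point coordinates used elsewhere. Two places where your sketch would benefit from another sentence: the $\bZ/2\bZ\times\bZ/4\bZ$ case deserves an explicit check that the modified parameter $t$ really records the independent $2$-torsion point, so that $W_{G,J}$ bijects with pairs $(P,Q)$ with $\mathrm{ord}(P)=4$, $\mathrm{ord}(Q)=2$, $Q\notin\langle P\rangle$; and for $\bZ/3\bZ$ your flex-line identity $\lambda^2=3X_0$ is exactly right but you should also note that $b$ is then forced by $A=6ab+27a^4$, so that the map $P\mapsto(a,b)$ is genuinely well defined.
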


\begin{proof}
When $G \in \cG_{\leq 4}$, for the if and only if part we will use the computation of \cite{GT}.
For example when $G = \bZ/4\bZ$, 
 assume that $(x_1,w_1)$ is a point of order 4 of an elliptic curve $E_{A,B}/\bF_p$. 
From the computation of the first coordinate of $[3](x_1, w_1) = (x_1, -w_1)$ for $w_1 \neq 0$, we have
\begin{equation*}
B = \frac{1}{4}\lbrb{5x_1^3 - Ax_1 \pm \sqrt{(3x_1^2 - 2A)(A+3x_1^2)^2}  }.
\end{equation*}
Hence, $B$ is in $\bF_p$ if and only if there exists $x_2 \in \bF_p$ such that $3x_1^2 - 2A = x_2^2$.
The computation of the second coordinate gives that $w_1^2 = (3x_1 - x_2)(x_2 + 3x_1)^2/8$,
so we have $x_2 \neq 3x_1$ and there exists $x_3 \in \bF_p^\times$ such that $x_3^2 = (3x_1-x_2)/2$.
By the change of variables $a = x_1$ and $b = x_3$, we have $(A,B) = \Phi_4(a,b)$ with
points of order 4, $(x_1, w_1) = (a,\pm b(-b^2 + 3a))$.
For the converse, we know that $(a,\pm b(-b^2 + 3a))$ are points of order $4$ of $E_{\Phi_4(a,b)}$.
The other cases with order $\leq 4$ can be proved similarly, but we remark that the first equation of \cite[p. 92]{GT} should be
\begin{equation*}
(-z_2,0), \quad  \lbrb{\frac{1}{2}(z_2 \pm \sqrt{z_2^2 - 4z_1}), 0};.
\end{equation*}

Now we prove the second statement when $G \in G_{\leq 4}$.
When $G$ is cyclic, it suffices to prove that $\Psi_{G, J}$ is bijective.
For example $G = \bZ/4\bZ$, we note that for $J = \Phi_4(a,b)$,  the $2$-tuple $(a, -b)$ also corresponds to the same $J$ but they induce
the two points of order 4, $(a,\pm b(-b^2 + 3a))$.
Therefore, for each points of order $4$ in $E_{A,B}(\bF_p)$ there is an $(a, b) \in W_{4, (A,B)}$.
For the converse, let $\Phi_4(a', b') = (A,B)$ and $(a', b'( - b'^2 + 3a')) = (a, b(-b^2 + 3a)).$
If $b \neq b'$ then we have $a = a'$ and $bb'=0$ which implies that the one of the points $(a', b'( - b'^2 + 3a'))$ and $(a, b(-b^2 + 3a))$ is of order 2. The cases $G = \bZ/2\bZ$ and $\bZ/3\bZ$ can be dealt similarly.

We treat the case $G = \bZ/2\bZ \times \bZ/2\bZ$ separately.
We recall that $E_J$ has a $\bZ/2\bZ \times \bZ/2\bZ$ if and only if $J = (f_{2\times 2}(a, b), g_{2\times 2}(a, b))$. Hence if $E_J$ does not have full 2-torsion, then $|W_{2 \times 2, J}|$ should be zero.
	It is easily deduced that  if $E_J$ does not have full 2-torsion, then $W_{2 \times 2, J}$ should be empty. If $E_{J}$ has the full 2-torsions,  then $b^3 + Ab + B \equiv 0 \pmod{p}$ has three zeros and $A = f_{2\times 2}(a, b) =  -(a^2 + 3b^2)/4$ for some $a$.
   This $a$ is not zero, since if so then $f_{2 \times 2}(0, b) = -3b^2/4$ and $g_{2 \times 2}(0, b) = b^3/4$ so $4f_{2 \times 2}(0, b)^3 + 27 g_{2 \times 2}(0, b)^2 \equiv 0 \pmod{p}$.
   Hence, there are exactly six $(a, b)$ such that 
   \begin{align*}
   b^3 + Ab + B \equiv 0, \qquad 4A \equiv -(a^2 + 3b^2) \pmod{p}.
   \end{align*}
	Since this equation is equivalent to a system of equation $J = (f_{2 \times 2}(a, b), g_{2 \times 2}(a, b))$, we can conclude that if $E_J(\bF_p) \geq \bZ/2\bZ \times \bZ/2\bZ$ then $|W_{2 \times 2, J}| = 6$.

When $G = \bZ/5\bZ$ by taking homogenization and computing the multiples of the points, we know that the points
\begin{equation*}
	(3a^2- 18ab + 3b^2, \pm 108ab^2), \qquad (3a^2 + 18ab + 3b^2, \pm 108a^2b)
\end{equation*}
are points with order 5 of $E_{J}$ where $J  = \Phi_G(a,b)$. 
When $(a, b)$ gives one of above four points, then other three come from $(-b, a), (b, -a)$ and $(-a, -b)$.


We claim that the four pairs are all the pairs $(c,d)$ such that $\Phi_5(c,d)=J$ and 
\begin{equation*}
	3a^2 - 18ab + 3b^2 = 3c^2 -18cd + 3d^2, 
	\qquad 3a^2 + 18ab + 3b^2 = 3c^2 + 18cd + 3d^2,
\end{equation*} or
\begin{equation*}
	3a^2 - 18ab + 3b^2 = 3c^2 +18cd + 3d^2, 
	\qquad 3a^2 + 18ab + 3b^2 = 3c^2 - 18cd + 3d^2.
\end{equation*}
Both systems do not generate new pairs.
Therefore, $\Psi_5$ is injective and $|W_{G, J}|$ is less than or equal to the number of points of order 5 in $E_{J}$.


Let $P$ be a point of order $5$ in $E_{J}(\mathbb{F}_p)$ and $E_{J} : y^2=x^3+Ax+B$. 
Let $x_1$ and $x_2$ be the $x$-coordinates of $P$ and $2P$. Then by the duplication formula, we have
\begin{align*}
\frac{x_1^4-2Ax_1^2-8Bx_1+A^2}{4(x_1^3+Ax_1+B)}=x_2, \quad \frac{x_2^4-2Ax_2^2-8Bx_2+A^2}{4(x_2^3+Ax_2+B)}=x_1.
\end{align*}
From the identity above, we can see that $2x_1+x_2$ and $x_1+2x_2$ are squares in $\mathbb{F}_p$.
Let  $\sqrt{2x_1+x_2}$ and $\sqrt{x_1+2x_2}$ be one of the square roots of $2x_1+x_2$ and $x_1+2x_2$ respectively. Then, by putting 
\begin{align*}
a=\frac{\sqrt{2x_1+x_2} + \sqrt{x_1+2x_2}}{6}, \quad b=\frac{\sqrt{2x_1+x_2} - \sqrt{x_1+2x_2}}{6},
\end{align*}
we have
\begin{align*}
x_1=3a^2+18ab+3b^2, \quad x_2=3a^2-18ab+3b^2,
\end{align*}
and one can check easily that $A=f_G(a,b)$ and $B=g_G(a,b)$. Hence for the point  $P$ of order 5, we found $(a,b) \in W_{J}$ such that $P = (3a^2 - 18ab + 3b^2, 108ab^2)$ which shows the surjectivity of $\Psi_5$.
%

	As we did in the $\bZ/5\bZ$-case, we can show that
	\begin{equation*}
	(-9a^2 -18ab + 3b^2,  \pm(108a^2b + 108ab^2))
	\end{equation*}	 
	are points of order 6 of elliptic curve $E_J$ where $J = \Phi_6(a, b)$ for some $a, b \in \bF_p$.
	Since $(-a, -b)$ also gives a same points, we have a map $\Psi_6$ from $W_{6, J}$ to the points of order 6 of $E_J(\bF_p)$.
	
	We claim that $(c,d)=\pm(a,b)$ are all the pair such that $\Phi_6(a,b)=\Phi_6(c, d)$ and $\Psi_6(a, b) = \Psi_6(c, d)$. Considering the $x$-coordinates of multiplies of the point $\Psi_6(a,b)$ we have
	\begin{align*}
	 -9a^2 - 18ab + 3b^2 &= -9c^2 - 18cd + 3d^2, \\
	 27a^2+18ab + 3b^2 &= 27c^2 + 18cd + 3d^2, \\  
	 -9a^2+18ab+3b^2 &= -9c^2 + 18cd + 3d^2,
	\end{align*}
	since the $x$-coordinate of $2P$ and $3P$ is $27a^2+18ab + 3b^2, -9a^2+18ab+3b^2$, respectively. This system does not generate a new pair.
Therefore $\Psi_6$ is injective.

	Let $P := (x_1, y_1)$ be a point of order 6 of $E_J(\bF_p)$ and let $(x_2, y_2) := 2P$, and $(x_3, 0):= 3P$.
	By the duplication formula, we know that $2x_1+x_2$ is square. 
	Since $2P$ is a point of order 3, then $J = \Phi_3(a_3, b_3)$ for some $a_3, b_3 \in \bF_p$ and $(3a_3^2,  \pm(9a_3^3 + b_3))$ are 3-torsion point of $E_J$.
	Especially, we note that $x_2/3$ is square in $\bF_p$.
	Now, we define
	\begin{align*}
		a := \frac{3\sqrt{x_2/3} + \sqrt{2x_1 + x_2} }{12}, \qquad
		b := \frac{\sqrt{x_2/3} - \sqrt{2x_1 + x_2}}{4}.
	\end{align*}
	Both are in $\bF_p$ and  we have $x_2 = 27a^2 + 18ab + 3b^2$, $x_1 = -9a^2 - 18ab + 3b^2$.
	Using the result on 3-torsion case, one can easily check that $(f_6(a, b), g_6(a, b)) = (A, B)$.
	
Let $G = \bZ/2\bZ \times \bZ/4\bZ$ and assume that $J = \Phi_{2 \times 4}(a, b)$ for some $a,b$. The claim is 
\begin{equation*}
|W_{ 2 \times 4, J}|  = \left\{
\begin{array}{cl}
24 & \textrm{if } E_J(\bF_p)[4] \cong \bZ/4\bZ \times \bZ/4\bZ, \\
8 & \textrm{if } E_J(\bF_p)[4] \cong \bZ/2\bZ \times \bZ/4\bZ. \\
\end{array}
\right.
\end{equation*}
Considering 6 systems deduced by $\Phi_{2\times 4}(a, b) = \Phi_{2\times4}(c, d)$, 
 we can see that $\left| W_{2 \times 4, J} \right|$ is at least $8$ and it should be exactly $8$ if either $p\equiv 3$ (mod $4$) or $p\equiv 1$ (mod $4$) and $ab$ is  non-square in $\bF_p$.  If $p\equiv 1$ (mod $4$) and $ab$ is a square in $\bF_p$, the 6 systems are all consistent and have 24 solutions and by direct computation, we can conclude that they are the preimages of $\Phi_{2 \times 4}(a,b)$. 
Therefore,
\begin{align*}
|W_{ 2 \times 4, J}|  = \left\{
\begin{array}{cl}
24 & \textrm{if }  \sqrt{ab} \in \bF_p  \textrm{ and } p \equiv 1 \pmod{4},\\
8 & \textrm{otherwise} . \\
\end{array}
\right.
\end{align*}

We recall that $E_{J}$ has three non-trivial 2-torsion points whose $x$-coordinates are $-(3a^2 - 18ab + 3b^2),-(3a^2 +18ab + 3b^2),  (6a^2 + 6b^2)$ respectively. The points $P$ with $2P=(6a^2+6b^2,0)$  are already included in the $E_J(\bF_p)$, and one can check that the two points
\begin{align*}
(3a^2 + 18ab + 3b^2 \pm 18\sqrt{ab}(a+b),  \sqrt{-1} \cdot 2 \cdot 3^3 \sqrt{ab}(\sqrt{a} \pm \sqrt{b})^2(a+b))
\end{align*}
and their inverses defined in $\bF_p[\sqrt{ab}, \sqrt{-1}]$ are the 4 points $Q$ with $2Q=(-(3a^2 +18ab + 3b^2), 0)$. 
Therefore, $p \equiv 1 \pmod{4}$ and $ab$ is square if and only if $E_J$ includes $\bZ/4\bZ \times \bZ/4\bZ$ which is equivalent to $|W_{2 \times 4, J}| = 24$.

At last, we need to show that when $E_{J}$ has $\bZ/2\bZ \times \bZ/4\bZ$ as a subgroup, then there exists $a, b \in \bF_p$ such that $J = \Phi_{2 \times 4}(a, b)$.
Since we already showed the analogue for $G = \bZ/2\bZ \times \bZ/2\bZ$ and $\bZ/4\bZ$, there are $u, v, s, t \in \bF_p$ such that
\begin{align*}
	(A, B) = (-(u^2+3v^2)/4, (v^3 - u^2v)/4) = (-3s^2 + 6st^2 -2t^4, (2s - t^2)(s^2 + 2st^2 - t^4)).
\end{align*} 
One can check that $5t^2-12s$ should be square, say $r^2$. Then, $(A, B) = \Phi_{2 \times 4}(6^{-1}r, 6^{-1}t)$.
\end{proof}

Hence for example, 
\begin{equation*}
|W_{6, J}|  = \left\{
\begin{array}{cl}
24 & \textrm{if } E_J(\bF_p)[6] \cong \bZ/6\bZ \times \bZ/6\bZ, \\
8 & \textrm{if } E_J(\bF_p)[6] \cong \bZ/6\bZ \times \bZ/3\bZ, \\
6 & \textrm{if } E_J(\bF_p)[6] \cong \bZ/6\bZ \times \bZ/2\bZ, \\
2 & \textrm{if }  E_J(\bF_p)[6] \cong \bZ/6\bZ, \\
0 & \textrm{if }  E_J(\bF_p)[6] \not\geq \bZ/6\bZ.
\end{array}
\right.
\end{equation*}

Comparing to Lemma \ref{assumption}, 
we should remark that the analogues result does not hold for all torsion groups.
\begin{example}
Let $E_1 : y^2 = x^3 + 2x+1$ and $E_2 : y^2 = x^3 + 2x + 4$ be elliptic curves over $\bF_5$.
Then, $E_1$ and $E_2$ are isomorphic, and $E_1(\bF_5)\cong E_2(\bF_5) \cong \bZ/7\bZ$.
However, one can compute that $|W_{7, (2, 1)}| = 0$ and $|W_{7, (2, 4)}| = 12$.
\end{example}


\subsection{Moments of traces of the Frobenious} \label{subsec:moments of class numbers}
Now, we define a class number weighted by $|W_{G, J}|$.
\begin{definition}
We define
\begin{equation} \label{eqn:defCap}
H_G(a,p) := \sum_{\substack{ J = (A, B) \in (\bZ/p\bZ)^2 \\  a_p(E_{J }) = a\\ 4 A^3 + 27B^2 \not\equiv 0 \pmod{p}} } |W_{G, J}|,
\end{equation}
where $a_p(E)$ is a trace of the Frobenius of an elliptic curve $E$ at $p$.
\end{definition}

The goal of this section is to show
\begin{align} \label{moment_torsion 1}
\sum_{|a|< 2\sqrt{p}}H_G(a,p)&=p^2+O_G(p),\\
\sum_{|a|< 2\sqrt{p}}aH_G(a,p) &=O_G(p^{\frac{3}{2}}), \label{moment_torsion 2} \\
\sum_{|a|< 2\sqrt{p}}a^2 H_G(a,p)& =p^3+O_G(p^{\frac{5}{2}}). \label{moment_torsion 3}
\end{align}

The main tool is the Eichler--Selberg trace formula \cite{KP2}.
We recall some notations first.
The Chebyshev polynomials of the second kind are defined as
\begin{align*}
U_0(t)=1, \quad U_1(t)=2t, \quad U_{j+1}(t)=2tU_j(t)-U_{j-1}(t).
\end{align*}
We define normalized Chebyshev polynomials to be
\begin{align*}
U_{k-2}(t,q):=q^{k/2-1}U_{k-2}\left( \frac{t}{2\sqrt{q}}\right)=\frac{\alpha^{k-1}-\overline{\alpha}^{k-1}}{\alpha-\overline{\alpha}} \in \mathbb{Z}[q,t],
\end{align*}
where $\alpha, \overline{\alpha}$ are the two roots in $\mathbb{C}$ of $X^2-tX+q=0$. Let
\begin{align*}
C_{R,j} :=\begin{cases}
a_{\frac{R}{2},j} & \text{ if $R$ is even} \\
a_{\frac{R-1}{2},j} +a_{\frac{R-1}{2},j-1}  & \text{ if $R$ is odd} 
\end{cases}
\qquad \textrm{for } a_{R,j}:={2R \choose j} -{2R \choose j-1}
\end{align*}
be the Chebyshev coefficients. We have
$$
t^R=\sum_{j=0}^{\lfloor R/2 \rfloor}C_{R,j}q^j U_{R-2j}(t,q)
$$
which is \cite[(1.3)]{KP2}. In particular we have
\begin{align} \label{moment}
t^0=U_0(t,q),\quad t=U_1(t,q), \quad t^2= U_2(t,q) + qU_0(t,q).
\end{align}

Let $E$ be an elliptic curve defined over a finite field $\mathbb{F}_q$ with $q$ elements, $\frak{C}$ be the set of all the isomorphism classes of elliptic curves over  $\mathbb{F}_q$ . 
Let $A$ denote a finite abelian group and let $\Phi_A$ to be 
\begin{align*}
\Phi_A(E)=\begin{cases}
1  & \text{ if there exists an injective homomorphism $A \hookrightarrow E(\mathbb{F}_p)$ }\\
0 & \text{otherwise}.
\end{cases}
\end{align*}
We define
\begin{align*}
\mathbb{E}_q(a^R\Phi_A) :=\frac{1}{q} \sum_{\substack{E \in \frak{C} \\ A \hookrightarrow E(\mathbb{F}_q ) }} \frac{a_q(E)^R}{|\text{Aut}_{\mathbb{F}_q}(E)|}.
\end{align*}


From now on, we assume that $q=p$. 
For a finite abelian group $A$, let $n_1=n_1(A)$ and $n_2=n_2(A)$ be its first and second invariant factors, respectively.
Also, we denote $\psi(n) = n \prod_{p \mid n} (1 + 1/p)$, $\varphi(n) = n\prod_{p \mid n} (1 - 1/p)$ and $\phi(n) = n \prod_{p \mid n}( - \varphi(p))$.

For $\lambda \mid (p-1, n_1)$, let
\begin{align*}
T_{n_1,\lambda}(p,1) := \frac{\psi(n_1^2/\lambda^2) \varphi(n_1/\lambda)}{\psi(n_1^2)}(-T_{\trace} -T_{\hyp} + T_{\dual}),
\end{align*}
with 
\begin{align*}
T_{\trace}& :=\frac{1}{\varphi(n_1)} \Tr(T_p|S_k(\Gamma(n_1,\lambda))),\\
T_{\hyp}& : =\frac{1}{4}\sum_{i=0}^1\sum_{\substack{\tau |n_1 \lambda \\ g |p-1}}\frac{\varphi(g)\varphi(n_1(n_1(\lambda,g)/g)}{\varphi(n_1)}\left(\delta_{n_1(\lambda,g)/g}(y_i,1)+(-1)^k \delta_{n_1(\lambda,g)/g}(y_i,-1)\right),\\
T_{\dual}&:=\frac{p+1}{\varphi(n_1)}\delta(k,2),
\end{align*}
where $g=(\tau, n_1\lambda/\tau)$, $y_i$ is the unique element of $(\bZ/(n_1\lambda/g)\bZ)^\times$ such that $y_i \equiv p^i$ (mod $\tau$) and $y_i \equiv p^{1-i}$ (mod $n_1\lambda /\tau)$,
$\delta(a, b)$ is the indicator function of $a = b$, and $\delta_c(a,b)$ is the indicator function of the congruence $a \equiv b \pmod{c}$.

\begin{theorem}\cite[Theorem 3, when $q=p$]{KP2} \label{thm:KP}
Let $A$ be a finite abelian group of rank at most 2. Suppose $(p,|A|)=1$ and $k\geq 2$. If $p \equiv 1$ (mod $n_2(A)$) we have
\begin{align}
\mathbb{E}_p(U_{k-2}(t,p)\Phi_A)=\frac{1}{\varphi(n_1/n_2)}\sum_{\nu | \frac{(p-1,n_1)}{n_2} } \phi(\nu) T_{n_1, n_2\nu}(p,1)
\end{align}
and if $p \not\equiv 1$ (mod $n_2(A)$), then $\mathbb{E}_p(U_{k-2}(t,p)\Phi_A)=0$. 
\end{theorem}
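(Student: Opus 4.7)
The plan is to realize the left hand side as a trace of the Hecke operator $T_p$ on a space of cusp forms for a congruence subgroup determined by $A$, and then read off the right hand side from the geometric side of the Eichler--Selberg trace formula, suitably normalized.

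First, I would interpret $\mathbb{E}_p(U_{k-2}(t,p)\Phi_A)$ moduli-theoretically. The weighted sum counts isomorphism classes $E/\mathbb{F}_p$ equipped with an embedding $A \hookrightarrow E(\mathbb{F}_p)$, weighted by $U_{k-2}(a_p(E),p)/|\Aut_{\mathbb{F}_p}(E)|$. Using the classification of finite abelian subgroups of $E[n_1]$, an embedding $A = \bZ/n_2\bZ \oplus \bZ/n_1\bZ \hookrightarrow E(\mathbb{F}_p)$ is equivalent to a Frobenius-fixed $\Gamma(n_1, n_2)$-level structure on $E$, where $\Gamma(n_1, \lambda)$ is the congruence subgroup whose moduli problem parametrizes elliptic curves with a specified basis of a subgroup of type $(n_1, \lambda)$.

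The vanishing statement is the easy part. If $A \hookrightarrow E(\mathbb{F}_p)$ with second invariant factor $n_2$, then $E[n_2] \subset E(\mathbb{F}_p)$. The Galois-equivariant Weil pairing $e_{n_2} : E[n_2] \times E[n_2] \to \mu_{n_2}$ is surjective, and Galois-invariance of $E[n_2]$ forces $\mu_{n_2} \subset \mathbb{F}_p^\times$, giving $n_2 \mid p-1$. Thus $p \not\equiv 1 \pmod{n_2}$ forces the sum to be empty.

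For the nonzero case, I would write the contribution from $A$-level structures as a sum of contributions from $\Gamma(n_1, n_2 \nu)$-level with parameter $\nu \mid (p-1, n_1)/n_2$ via Möbius-type inversion on the set of subgroups of $E[n_1]$ isomorphic to $A$; the factor $\phi(\nu)/\varphi(n_1/n_2)$ arises from counting ordered bases of $A$ modulo the ambiguity in how Frobenius acts on $E[n_1]$. The Kaplan--Petrow version of the Eichler--Selberg trace formula then expresses the Hecke trace at level $\Gamma(n_1, \lambda)$ as $T_{n_1,\lambda}(p,1) = \frac{\psi(n_1^2/\lambda^2)\varphi(n_1/\lambda)}{\psi(n_1^2)}(-T_{\trace} - T_{\hyp} + T_{\dual})$, where the three terms come respectively from the elliptic conjugacy classes in $\SL_2(\bZ)$ (the classical Eichler--Selberg elliptic contribution, which is exactly the generating function matched by $U_{k-2}(t,p)$), from the parabolic contributions at $\Gamma(n_1, \lambda)$-cusps (the $T_{\hyp}$ double sum over divisors $\tau$ of $n_1\lambda$ and $g$ of $p-1$), and from the dualizing contribution present only in weight $2$.

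The main obstacle would be the hyperbolic piece $T_{\hyp}$: one has to track how the cusps of $\Gamma(n_1, \lambda)$ split under the action of Frobenius, identify the canonical elements $y_0, y_1 \in (\bZ/(n_1\lambda/g)\bZ)^\times$ coming from the pairing of $p^i$ and $p^{1-i}$ modulo the two factors of $n_1\lambda/g$, and verify that the resulting indicator sums reproduce exactly the parabolic part of the trace formula at this level. In parallel, the normalizing constant $\psi(n_1^2/\lambda^2)\varphi(n_1/\lambda)/\psi(n_1^2)$---essentially reflecting the index $[\SL_2(\bZ) : \Gamma(n_1, \lambda)]$ adjusted by the number of $A$-structures per elliptic curve---requires careful bookkeeping; this is where most of the technical labor in \cite{KP2} is concentrated.
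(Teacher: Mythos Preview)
The paper does not prove this theorem at all: it is stated with the attribution \cite[Theorem 3, when $q=p$]{KP2} and then immediately used as a black box. There is therefore no ``paper's own proof'' to compare your proposal against.

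Your sketch is a plausible outline of how the argument in \cite{KP2} actually proceeds---moduli-theoretic interpretation of $\Phi_A$ in terms of $\Gamma(n_1,\lambda)$-level structures, the Weil pairing for the vanishing when $p\not\equiv 1\pmod{n_2}$, M\"obius-type inversion over $\nu$, and the Eichler--Selberg trace formula separating elliptic, hyperbolic, and dualizing contributions. As a description of the strategy this is fine, and you correctly identify the hyperbolic term and the index bookkeeping as the technical core. But since the present paper simply quotes the result, the appropriate ``proof'' here is just the citation; if you intend to reproduce the argument you should work directly from \cite{KP2} rather than this paper.
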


\begin{proposition}
Let $G$ be one of the groups $\bZ/n\bZ$ for $2 \leq n \leq 6$ or $\bZ/2\bZ \times \bZ/2\bZ$.
Then, (\ref{moment_torsion 1}), (\ref{moment_torsion 2}) and (\ref{moment_torsion 3}) hold.
\end{proposition}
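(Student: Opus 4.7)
The plan has two parts: the zeroth moment is immediate from the definitions, while the first and second moments are obtained from the Eichler--Selberg trace formula in Theorem \ref{thm:KP}. For the zeroth moment, the identity $\sum_{J \in (\bZ/p\bZ)^2} |W_{G,J}| = p^2$ holds by the definition of $W_{G,J}$ as a preimage count, and discarding the singular pairs $J$ with $4A^3 + 27B^2 \equiv 0 \pmod{p}$ removes only $O_G(p)$ by Proposition \ref{prop:sum of WGI}, giving $(\ref{moment_torsion 1})$ at once.

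For the higher moments, I first use Lemma \ref{prop:value of WGI} to replace $|W_{G,J}|$ by the number $m_G(E_J)$ of injective homomorphisms $G \hookrightarrow E_J(\bF_p)$, which depends only on $[E_J]$. Since $p \geq 5$, each $\bF_p$-isomorphism class $[E]$ is represented by exactly $(p-1)/|\Aut_{\bF_p}(E)|$ pairs $J$, so
\begin{equation*}
    \sum_{|a|<2\sqrt p} a^R\, H_G(a,p) = (p-1) \sum_{[E]} \frac{a_p(E)^R\, m_G(E)}{|\Aut_{\bF_p}(E)|}.
\end{equation*}
Next, $m_G$ decomposes as a $\bZ$-linear combination of the indicators $\Phi_A$: for instance $m_{\bZ/n\bZ} = (n-1)\Phi_{\bZ/n\bZ} + n(n-1)\Phi_{\bZ/n\bZ \times \bZ/n\bZ}$ when $n$ is prime, $m_{\bZ/2\bZ \times \bZ/2\bZ} = 6\,\Phi_{\bZ/2\bZ \times \bZ/2\bZ}$, and similar explicit expansions hold for $G = \bZ/4\bZ,\bZ/6\bZ$ obtained by enumerating elements of maximal order in each possible $E(\bF_p)[n]$. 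Applying the Chebyshev identities $t = U_1(t,p)$ and $t^2 = U_2(t,p) + p\,U_0(t,p)$ from $(\ref{moment})$, each moment becomes a finite linear combination of the averages $\mathbb{E}_p(U_{k-2}(t,p)\Phi_A)$ with $k \in \{2,3,4\}$, to be evaluated by Theorem \ref{thm:KP}.

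For $k = 3$ the dual term $T_{\dual}$ vanishes, while for the groups under consideration the spaces $S_3(\Gamma(n_1,\lambda))$ are of small (often zero) dimension and $T_{\hyp}$ contributes $O_G(1)$, so $\sum_{[E]} a_p(E)\Phi_A(E)/|\Aut_{\bF_p}(E)| = O_G(p^{1/2})$ and multiplication by $(p-1)$ yields $(\ref{moment_torsion 2})$. For the second moment the $p\,U_0$ piece ($k=2$) generates the main term via $T_{\dual} = (p+1)/\varphi(n_1)$, producing $(p-1) \cdot p \cdot p \cdot \mathbb{E}_p(\Phi_A) = p^3 + O_G(p^2)$ in the aggregate over the expansion of $m_G$, while the $U_2$ piece ($k=4$) contributes only $O_G(p^{5/2})$, yielding $(\ref{moment_torsion 3})$. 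The hard part will be the combinatorial bookkeeping: one must check that, after summing the $k=2$ contributions from Theorem \ref{thm:KP} against the coefficients of $m_G = \sum c_A \Phi_A$, the leading constant in the second moment is exactly $1$ times $p^3$, and one must handle the congruence restriction $p \equiv 1 \pmod{n_2(A)}$, the boundary classes with $j \in \{0,1728\}$, and the singular locus $\Delta \equiv 0$, all of which should contribute only to the error terms. A convenient consistency check for the coefficients $c_A$ is to verify that the $R=0$ specialization of the same calculation recovers $(\ref{moment_torsion 1})$.
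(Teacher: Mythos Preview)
Your outline is correct and follows the same route as the paper: express $H_G(a,p)$ through Lemma \ref{prop:value of WGI} as a $\bZ$-linear combination of the indicators $\Phi_A$ summed over isomorphism classes, then feed this into Theorem \ref{thm:KP} via the Chebyshev identities $(\ref{moment})$ and bound the trace terms by Deligne. The zeroth moment, the use of $t=U_1$ for the first moment, and the split $t^2=U_2+pU_0$ for the second moment all match the paper.

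The one place where the paper is slicker than your plan is exactly the step you flag as the hard part. You propose to extract the main term $p^3$ of the second moment by computing the $k=2$ ($T_{\dual}$) contribution against the explicit coefficients in the expansion $m_G=\sum_A c_A\Phi_A$ and checking that the leading constant comes out to $1$. The paper sidesteps this entirely: since the zeroth moment $(\ref{moment_torsion 1})$ is established elementarily, dividing it by $p(p-1)$ gives directly
\[
\sum_i \omega_{G,i}\,\mathbb{E}_p(\Phi_{A_{G,i}})=1+O\!\left(\tfrac{1}{p}\right),
\]
and this is precisely the $U_0$-piece appearing in $t^2=U_2+pU_0$. Multiplying back by $p\cdot p(p-1)$ yields $p^3+O(p^2)$ with no bookkeeping at all, and the $U_2$-piece is then dispatched by Theorem \ref{thm:KP} and Deligne as you say. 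In other words, what you call a ``consistency check'' for the coefficients $c_A$ is in the paper the actual argument, not a check. Adopting this shortcut removes the need to compute any $c_A$ explicitly and also makes the congruence condition $p\equiv 1\pmod{n_2(A)}$ irrelevant for the main term.
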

\begin{proof}
For each group $G$, we denote $n_1$ be its first invariant factor.
We define $A_{G, i}$ be abelian groups satisfying $G \leq A_{G, i} \leq \bZ/n_1\bZ \times \bZ/n_1\bZ$,
and $j < i$ if and only if $A_{G, j } < A_{G, i}$.
We define $\widetilde{\omega}_{G, i}$ to be  $\left|W_{G, I} \right|$ if $E_I[n_1](\bF_p) \cong A_{G, i}$. This is well defined by Lemma \ref{prop:value of WGI}.
Let
\begin{equation*}
\omega_{G,i} := \widetilde{\omega}_{G, i} - \sum_{j < i} \omega_{G,j}.
\end{equation*}
Then, one can obtain that 
\begin{align*}
\sum_{|a| <2 \sqrt{p}}a^R H_G(a,p) 
= p(p-1)\sum_i \omega_{G,i} \mathbb{E}_p(a^R\Phi_{A_{G,i}})
\end{align*}
For arbitrary $G$, we can show that 
\begin{equation*}
	\sum_{|a| < 2 \sqrt{p}}H_G(a,p)=p^2+O(p),
\end{equation*}
by Proposition \ref{prop:sum of WGI}.
Hence, 
\begin{equation} \label{eqn:U0_sum}
	\sum_i \omega_{G,i} \mathbb{E}_p(\Phi_{A_{G,i}}) = 1 + O\lbrb{\frac{1}{p}}.
\end{equation}
Since $t^2= U_2(t,p) + pU_0(t,p)$, we have the identity
\begin{align*}
\mathbb{E}_p(t^2\Phi_{A}) = \mathbb{E}_p(U_2(t,p)\Phi_{A}) + p\mathbb{E}_p(U_0(t,p)\Phi_{A}) 
\end{align*}
and this together with (\ref{eqn:U0_sum}) implies
\begin{align*}
\sum_{|a| <2 \sqrt{p}}a^2H_G(a,p)=p(p-1)(p+O(1)) + O(p^{2.5})=p^3+O(p^{2.5})
\end{align*}
because $\mathbb{E}_p(U_2(t,p)\Phi_{A}) \ll_G \frac{p^{1.5}}{p}\ll_G p^{0.5}$
by Theorem \ref{thm:KP} and Deligne bound.

Using the identity $t=U_1(t,p)$ and $\mathbb{E}_p(U_1(t,p)\Phi_{A}) \ll_G p^{-0.5}$, it is easy to see that
$$
\sum_{|a| <2 \sqrt{p}}aH_G(a,p)=O_G(p^{1.5}),
$$
by Theorem \ref{thm:KP} and Deligne bound.
\end{proof}
When $G = \bZ/2\bZ$ or $\bZ/2\bZ \times \bZ/2\bZ$, we can obtain the $2R+1$-th moments.

\begin{proposition}
When $G = \bZ/2\bZ$ or $\bZ/2\bZ \times \bZ/2\bZ$, we have
\begin{equation*}
	\sum_{|a| < 2\sqrt{p}}a^{2R + 1}H_G(a, p) = 0
\end{equation*}
for $R \geq 0$.
\end{proposition}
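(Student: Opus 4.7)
The plan is to exploit a quadratic twist symmetry: one produces an involution on the index set of the sum that flips the sign of $a_p$ while preserving the weight $|W_{G, J}|$. Rewriting
\[
\sum_{|a| < 2\sqrt{p}} a^{2R+1} H_G(a, p) = \sum_{\substack{J = (A, B) \in (\bZ/p\bZ)^2 \\ 4A^3 + 27B^2 \not\equiv 0 \pmod{p}}} a_p(E_J)^{2R+1}\, |W_{G, J}|,
\]
the vanishing follows from pairing each $J$ with its quadratic twist partner.

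Fix a quadratic nonresidue $d \in (\bZ/p\bZ)^\times$ (possible since $p \geq 5$), and let $\tau_d : (A, B) \mapsto (d^2 A, d^3 B)$, which is the quadratic twist by $d$ on short Weierstrass models. Then $\tau_d$ is a bijection on $(\bZ/p\bZ)^2$ preserving the condition $4A^3 + 27B^2 \not\equiv 0$ because $4(d^2 A)^3 + 27(d^3 B)^2 = d^6(4A^3 + 27B^2)$, and the standard quadratic twist identity gives $a_p(E_{\tau_d(J)}) = -a_p(E_J)$ because $d$ is a nonresidue.

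The core step is to exhibit a compatible scaling $\sigma_d$ on $(\bZ/p\bZ)^2$ satisfying $\Phi_G \circ \sigma_d \equiv \tau_d \circ \Phi_G \pmod{p}$; since $d$ is a unit mod $p$, $\sigma_d$ will be a bijection on $(\bZ/p\bZ)^2$ and hence restricts to a bijection $W_{G, J} \to W_{G, \tau_d(J)}$, proving $|W_{G, \tau_d(J)}| = |W_{G, J}|$. From the explicit formulas in (\ref{eqn:def fGgG}), the correct choices are $\sigma_d(a, b) = (d^2 a, d b)$ for $G = \bZ/2\bZ$ (since $f_2(d^2 a, d b) = d^2 a$ and $g_2(d^2 a, d b) = d^3 b^3 + d^3 a b = d^3 g_2(a, b)$) and $\sigma_d(a, b) = (d a, d b)$ for $G = \bZ/2\bZ \times \bZ/2\bZ$ (using that $f_{2 \times 2}$ and $g_{2 \times 2}$ are homogeneous of degrees $2$ and $3$ respectively).

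Reindexing by $\tau_d$ then gives
\[
\sum_J a_p(E_J)^{2R+1} |W_{G, J}| = \sum_J a_p(E_{\tau_d(J)})^{2R+1} |W_{G, \tau_d(J)}| = -\sum_J a_p(E_J)^{2R+1} |W_{G, J}|,
\]
so the sum equals its own negative and vanishes. The only conceptual point is spotting the right scaling $\sigma_d$, which is forced by the weighted-homogeneity of $(f_G, g_G)$ under $\tau_d$; once that is identified the argument is purely formal, and no serious obstacle arises.
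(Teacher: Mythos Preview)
Your proof is correct and takes a genuinely different route from the paper's. The paper invokes Schoof's formulas expressing the weighted counts $N_2(a)$ and $N_{2\times 2}(a)$ in terms of Hurwitz class numbers $H(a^2-4p)$ and $H((a^2-4p)/4)$, then reads off the evenness $H_G(a,p)=H_G(-a,p)$ from the fact that these expressions depend only on $a^2$ (together with the parity of the congruence conditions $a\equiv p+1$). Your argument instead builds the symmetry directly at the level of the parameter map: the weighted homogeneity of $(f_G,g_G)$ under the scalings $\sigma_d$ you exhibit intertwines the quadratic twist $\tau_d$ on the target with a bijection on the source, forcing $|W_{G,J}|=|W_{G,\tau_d(J)}|$ and hence $H_G(a,p)=H_G(-a,p)$.

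The trade-offs are as follows. Your approach is more self-contained (no appeal to \cite{Sch}) and makes transparent \emph{why} the result is specific to $\bZ/2\bZ$ and $\bZ/2\bZ\times\bZ/2\bZ$: for the other $G\in\cG_{\leq 4}$ the required scaling exponents are half-integers, so no $\sigma_d$ with $d$ a nonresidue exists. The paper's approach, on the other hand, yields more: it identifies $H_G(a,p)$ exactly with class-number expressions, which is structurally informative beyond the vanishing of odd moments.
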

\begin{proof}
	Let $N_n(a)$ (resp. $N_{n \times n}(a)$) be the number of isomorphism classes of elliptic curves over $\bF_p$ such that $E(\bF_p)[n] \geq \bZ/n\bZ$ (resp. $E(\bF_p)[n] = \bZ/n\bZ \times \bZ/n\bZ$) with weights $2/|\Aut_{\bF_p}(E)|$. Then, \cite[Theorem 4.6, 4.9]{Sch} shows that for a prime $p \geq 5$, an $a$ in the Weil bound, and a positive integer $n \geq 2$,
	\begin{equation*}
N_n(a) = \left\{ \begin{array}{ccc}
H(a^2 - 4p) & \textrm{if $a \equiv p+1 \pmod{n}$,} \\
0 & \textrm{otherwise,}
\end{array} \right.
\end{equation*}
and
\begin{equation*}
N_{n\times n}(a) = \left\{ \begin{array}{ccc}
H\lbrb{\frac{a^2 - 4p}{n^2}} & \textrm{if $p \equiv 1 \pmod{n}$ and $a \equiv p+1 \pmod{n^2}$,} \\
0 & \textrm{otherwise.}
\end{array} \right.
\end{equation*}
By Lemma \ref{prop:value of WGI},
\begin{equation*}
	H_2(a, p) = \frac{p-1}{2}(N_2(a) + 2N_{2\times 2}(a)), \qquad
	H_{2 \times 2}(a, p) = 6 \cdot \frac{p-1}{2} N_{2\times 2}(a).
\end{equation*}
Since $N_2(a) = N_2(-a)$ and $N_{2 \times 2}(a) = N_{2 \times 2}(-a)$, the result follows.
\end{proof}

This will be used for the Frobenius trace formula for elliptic curves.

\section{Counting elliptic curves with torsion points and local conditions} \label{sec:main}
We introduce some notations first. Let
\begin{equation*}
	R_G(X) = \lcrc{(a,b) \in \bR^2 : | f_G(a,b) | \leq X^{\frac{1}{3}}, |g_G(a,b)| \leq X^{\frac{1}{2}} }.
\end{equation*}

For $G$ in $\mathcal{G}_{\leq 4}$ we define
\begin{align*}
	\cD_G(X) & = \lcrc{(A,B) \in \bZ^2 : (A,B) = \Phi_G(a,b) \textrm{ for some } (a, b) \in R_G(X) \cap \bZ^2}, \\
	\cM_G(X) & = \lcrc{(A,B) \in \cD_G(X) : \textrm{ if $p^4 \mid A$, then  $p^6 \nmid B$} },
\end{align*}
and
\begin{equation*}
	\cE_G(X) = \lcrc{(A, B) \in \cM_G(X) : 4A^3 + 27B^2 \neq 0}, \qquad
	\cS_G(X) = \lcrc{(A, B) \in \cM_G(X) : 4A^3 + 27B^2 = 0},
\end{equation*}
where $\cE_G(X)$ represents elliptic curves with $G$ torsion and $\cS_G(X)$ takes up singular curves. 
We note that $\cE_G(X)$ coincide with the previous definition.

For $G \in \cG_{\geq 5}$, we recall that $M_G(X)$ is the set of relatively prime pairs  $(a,b)$ with $h(\Phi_G(a,b))\leq X$. 
We define 
\begin{align*}
	\widetilde{M}_G^e(X) = \lcrc{(a, b) \in \bZ^2 : (a, b) = 1, e = e(a, b), |f_G(a, b)| \leq X^{\frac{1}{3}}, |g_G(a, b)| \leq X^{\frac{1}{2}}},
\end{align*}
and $\widetilde{M}_G(X)$ as the union of $\widetilde{M}_G^e(X)$ for all $e \geq 1$. We define $\cE_G(X)$ as (\ref{eqn:EG large}) and 
\begin{align*}
	\cS_G(X) = \lcrc{(A, B) \in \cS(X) : \Phi_G(a, b) \textrm{ for relatively prime } (a, b)}
\end{align*}
where 
\begin{equation*}
\cS(X) = \lcrc{(A, B) \in \bZ^2 : 
\begin{array}{cc}
|A| \leq X^{\frac{1}{3}}, |B| \leq X^{\frac{1}{2}}, 4A^3 + 27B^2 = 0, \\ 
\textrm{if $p^4$ divides $A$, then $p^6$ does not divide $B$. }
\end{array}
}.
\end{equation*}

For the reader's convenience we remark that $(a, b)$ denotes an element in the domain of $\Phi_G$ and $R_G$ (resp.  $M_G$  for $G$ in $\mathcal{G}_{\geq 5}$) and $(A,B)$ does in the range of $\Phi_G$. Also, $\cD_G, \cM_G, \cE_G,$ and $\cS_G$ are sets on the range side. 
For pairs $I, J \in (\bZ/p\bZ)^2$, the subscripts $-_{G, I}(X)$ or $-_{G, J}(X)$ means that this is the subset
of the original set  consisting of elements $(a,b)\equiv I$ $\pmod{p}$ or $(A,B)\equiv J$ $\pmod{p}$ respectively. We often drop the subscript $G$ to ease the notation.


\begin{lemma} \label{lem:pointsinRGX}
For a torsion subgroup $G$,  the number of integer points in $R_G(X)$ is
\begin{equation*}
\Area(R_G(1)) X^{\frac{1}{d(G)}} + O(X^{\frac{1}{e(G)}}).
\end{equation*}
\end{lemma}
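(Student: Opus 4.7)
The plan is to realise $R_G(X)$ as a diagonal linear dilation of the fixed bounded region $R_G(1)$, compute the area exactly, and control the lattice-point error by the length of the boundary using the classical Lipschitz-boundary counting principle.

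For each torsion group $G$ I would identify positive weights $(w_1,w_2)$ so that the diagonal dilation $T_X:(a,b)\mapsto(X^{w_1}a,X^{w_2}b)$ satisfies
\begin{align*}
f_G(X^{w_1}a, X^{w_2}b)=X^{1/3}f_G(a,b), \qquad g_G(X^{w_1}a, X^{w_2}b)=X^{1/2}g_G(a,b).
\end{align*}
For $G\in\mathcal{G}_{\geq 5}$ the polynomials $f_G,g_G$ are homogeneous of degrees $2d(G)/3$ and $d(G)$ by the convention $3\deg f_G=2\deg g_G=2d(G)$, so the uniform choice $w_1=w_2=1/(2d(G))$ works. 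For $G\in\mathcal{G}_{\leq 4}$ one reads the weights off (\ref{eqn:def fGgG}): $(w_1,w_2)=(1/3,1/6),(1/12,1/4),(1/6,1/12),(1/6,1/6)$ for $G=\bZ/2\bZ,\bZ/3\bZ,\bZ/4\bZ,\bZ/2\bZ\times\bZ/2\bZ$ respectively. A direct check on each monomial verifies the quasi-homogeneity, and yields the two key identities
\begin{align*}
w_1+w_2=\frac{1}{d(G)}, \qquad \max(w_1,w_2)=\frac{1}{e(G)},
\end{align*}
matching the tabulated values of $d(G)$ and the definition of $e(G)$.

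With the dilation in hand, $T_X$ is a bijection $R_G(1)\to R_G(X)$ with Jacobian $X^{1/d(G)}$, so $\Area(R_G(X))=\Area(R_G(1))\,X^{1/d(G)}$. The region $R_G(1)$ is bounded (for $G\in\cG_{\geq 5}$ by homogeneity, for $G\in\cG_{\leq 4}$ by inspection of the dominant monomial in $g_G$), hence $R_G(X)\subseteq[-CX^{w_1},CX^{w_1}]\times[-CX^{w_2},CX^{w_2}]$ for a constant $C=C(G)$. The boundary $\partial R_G(X)$ lies in the union of the four real algebraic curves $\{f_G=\pm X^{1/3}\}$ and $\{g_G=\pm X^{1/2}\}$, each of degree bounded independently of $X$; intersected with the box above, each has arclength $O(X^{\max(w_1,w_2)})=O(X^{1/e(G)})$.

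Finally I would invoke the standard lattice-point counting principle (cf.\ Davenport) that for a bounded planar region $R$ whose boundary is a finite union of rectifiable arcs, $|R\cap\bZ^2|=\Area(R)+O(\mathrm{length}(\partial R)+1)$. Combining the area and boundary-length bounds gives the claim. The only genuine subtlety is confirming the quasi-homogeneity of $f_G,g_G$ and the boundedness of $R_G(1)$ in the non-homogeneous cases $G\in\cG_{\leq 4}$; both are routine once the weights are in hand and can be checked monomial by monomial.
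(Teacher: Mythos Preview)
Your proposal is correct and follows essentially the same strategy as the paper's proof: identify the weighted dilation $(a,b)\mapsto(X^{w_1}a,X^{w_2}b)$ making $f_G,g_G$ quasi-homogeneous, read off $\Area(R_G(X))=X^{1/d(G)}\Area(R_G(1))$ from the Jacobian, and invoke the Principle of Lipschitz (Davenport) for the $O(X^{1/e(G)})$ error. The paper simply cites \cite{HS} for the $\bZ/2\bZ,\bZ/3\bZ$ cases and is terser elsewhere, but the content is identical.
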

\begin{proof}
We note that \cite[Lemma 5.2]{HS} proves this lemma for $G = \bZ/2\bZ, \bZ/3\bZ$.
Since $f_4(a,b) = X^{\frac{1}{3}}, g_4(a,b) = X^{\frac{1}{2}}$ are equivalent to 
$f_4(a/X^{\frac{1}{6}}, b/X^{\frac{1}{12}}) = 1, g_4(a/X^{\frac{1}{6}}, b/X^{\frac{1}{12}})=1$, by change of variables we have
\begin{equation*}
\textrm{Area}(R_4(X)) = X^{\frac{1}{4}} \textrm{Area}(R_4(1)).
\end{equation*}
Then, the claim follows from the Principle of Lipschitz, \cite[(5.3)]{HS}.
We can do the same thing for $G = \bZ/2\bZ\times \bZ/2\bZ$.
Also, we obtain the result for the groups $G$ in $\mathcal{G}_{\geq 5}$ since $3 \deg f_G(a,b) = 2 \deg g_G(a, b) = 2d(G)$.
\end{proof}

By the Principle of Lipschitz, we have
\begin{corollary} \label{Dtildeest}
For a prime $p \geq 5$, $I$ an element in $(\bZ/p\bZ)^2$, and a torsion subgroup $G$, we have
\begin{equation*} 
|R_{G, I}(X)| =
\Area(R_G(1))  p^{-2}X^{\frac{1}{d(G)}} + O( 1+ p^{-1}X^{\frac{1}{e(G)} }).
\end{equation*} 
\end{corollary}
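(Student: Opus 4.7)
The plan is to reduce the proof to a direct application of the Principle of Lipschitz on the affinely-rescaled region defined by the congruence $(a,b) \equiv I \pmod{p}$.

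First I would set up the change of variables. Write $I = (i_1, i_2) \in \bZ^2$ with representatives in $\{0, 1, \dots, p-1\}^2$. Then every integer point $(a,b) \equiv I \pmod{p}$ can be written uniquely as $(a,b) = I + p(a', b')$ for some $(a', b') \in \bZ^2$. Under this bijection, $|R_{G, I}(X)|$ equals the number of integer lattice points $(a', b')$ in the region
\begin{equation*}
R_G'(X) := \tfrac{1}{p}\bigl(R_G(X) - I\bigr) = \lcrc{(a', b') \in \bR^2 : (i_1 + pa', i_2 + pb') \in R_G(X)}.
\end{equation*}
This $R_G'(X)$ is just an affine rescaling of $R_G(X)$.

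Next, I would compute the area and estimate the boundary. By the scaling property used in the proof of Lemma \ref{lem:pointsinRGX}, $R_G(X)$ is obtained from $R_G(1)$ by dilating the coordinates by $X^{1/(2d(G))}$ and $X^{1/(3d(G))}$ (or similar with $e(G)$ controlling the slowest scaling of the boundary length), so $\Area(R_G(X)) = \Area(R_G(1)) X^{1/d(G)}$. Dividing by $p$ in each coordinate gives
\begin{equation*}
\Area(R_G'(X)) = p^{-2}\,\Area(R_G(1)) X^{1/d(G)}.
\end{equation*}
For the boundary, the scaling implicit in the proof of Lemma \ref{lem:pointsinRGX} shows that the length of $\partial R_G(X)$ is $O(X^{1/e(G)})$, so the length of $\partial R_G'(X)$ is $O(p^{-1} X^{1/e(G)})$.

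Then I would invoke the Principle of Lipschitz (as cited in \cite[(5.3)]{HS} and used in the proof of Lemma \ref{lem:pointsinRGX}), which asserts that the number of integer lattice points in a bounded region differs from its area by $O(1 + \text{length of boundary})$. This yields
\begin{equation*}
|R_{G, I}(X)| = \Area(R_G'(X)) + O\bigl(1 + p^{-1} X^{1/e(G)}\bigr) = \Area(R_G(1))\, p^{-2} X^{1/d(G)} + O\bigl(1 + p^{-1} X^{1/e(G)}\bigr),
\end{equation*}
as desired.

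The content here is essentially routine, and I don't expect any significant obstacle: the only minor subtlety is that the boundary of $R_G(X)$ is defined by the curves $|f_G(a,b)| = X^{1/3}$ and $|g_G(a,b)| = X^{1/2}$, which is a union of finitely many smooth algebraic arcs whose lengths admit the claimed scaling from $3 \deg f_G = 2 \deg g_G = 2d(G)$. The additive constant $O(1)$ in the error term absorbs the case $p^{-1} X^{1/e(G)} < 1$, where the naive Lipschitz bound could otherwise degenerate.
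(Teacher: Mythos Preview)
Your proposal is correct and is essentially the same approach as the paper's, which simply states that the corollary follows ``By the Principle of Lipschitz'' without further elaboration. You have spelled out the standard affine change of variables and area/boundary bookkeeping that the paper leaves implicit.
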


For $G=\bZ/2\bZ \times \bZ/2\bZ$, we consider only the pairs $(a,b)$ with $a\equiv b$ $\pmod{2}$. 
By Lemma \ref{lem:pointsinRGX} and M\"obious inversion argument gives the following corollary, which is a complement of \cite[Theorem 5.6]{HS}. For details, we refer to the proof of Proposition  \ref{prop:tor cE}.
\begin{corollary} \label{cor:HS4 22case}
For $G$ in $\mathcal{G}_{\leq 4}$, let
\begin{align*}
	c(G) := \frac{\Area(R_G(1))}{2^{\delta_{G=2\times 2}}r(G)\zeta(\frac{12}{d(G)})}.
\end{align*}
Then,
\begin{align*}
	|\cE_G(X)| = c(G)X^{\frac{1}{d(G)}} + O(X^{\frac{1}{e(G)}}).
\end{align*}
\end{corollary}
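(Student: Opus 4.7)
The proof proceeds in three stages: first estimate $|\cD_G(X)|$, then pass to $|\cM_G(X)|$ via M\"obius inversion on the minimality condition, and finally remove the singular locus. Starting from Lemma \ref{lem:pointsinRGX}, we have $|R_G(X)\cap\bZ^2|=\Area(R_G(1))X^{1/d(G)}+O(X^{1/e(G)})$. For $G=\bZ/2\bZ\times\bZ/2\bZ$, integrality of $\Phi_G(a,b)$ forces $a\equiv b\pmod{2}$, so only an index-two sublattice contributes, which accounts for the factor $2^{\delta_{G=2\times 2}}$. Combining this with Lemma \ref{lem:preimage rG}, which asserts that $\Phi_G$ is $r(G)$-to-one outside $O(X^{1/e(G)})$ exceptional points, yields
\begin{equation*}
|\cD_G(X)|=\frac{\Area(R_G(1))}{r(G)\cdot 2^{\delta_{G=2\times 2}}}\,X^{1/d(G)}+O\!\lbrb{X^{1/e(G)}}.
\end{equation*}

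For each squarefree $d\geq 1$, put $\cD_G^{(d)}(X):=\{(A,B)\in\cD_G(X):d^4\mid A,\,d^6\mid B\}$. The change of variables $(x,y)\mapsto(d^2x,d^3y)$ is a $\bQ$-isomorphism $E_{A,B}\xrightarrow{\sim}E_{A/d^4,B/d^6}$, hence preserves torsion; combined with \cite[Theorem 1.1]{GT}, which tells us every elliptic curve with torsion $\geq G$ lies in the image of $\Phi_G$, we obtain a bijection $\cD_G^{(d)}(X)\xrightarrow{\sim}\cD_G(X/d^{12})$ via $(A,B)\mapsto(A/d^4,B/d^6)$. M\"obius inversion on the minimality condition then gives
\begin{equation*}
|\cM_G(X)|=\sum_{d\geq 1}\mu(d)\,|\cD_G^{(d)}(X)|=\sum_{d\leq X^{1/12}}\mu(d)\,|\cD_G(X/d^{12})|,
\end{equation*}
and substituting the asymptotic above produces a main term
\begin{equation*}
\frac{\Area(R_G(1))}{r(G)\cdot 2^{\delta_{G=2\times 2}}}\,X^{1/d(G)}\sum_{d\geq 1}\mu(d)d^{-12/d(G)}=c(G)\,X^{1/d(G)},
\end{equation*}
via $\sum_{d\geq 1}\mu(d)d^{-s}=\zeta(s)^{-1}$ with $s=12/d(G)>1$. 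The accumulated error is $O(X^{1/e(G)})$, because the exponent $12/e(G)$ is strictly larger than $1$ for every $G\in\cG_{\leq 4}$, so both the tail of the M\"obius sum and the summed Lipschitz errors over $d\leq X^{1/12}$ remain of order $X^{1/e(G)}$.

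Finally, $\cS_G(X)$ is contained in the one-parameter family $(A,B)=(-3\alpha^2,2\alpha^3)$ with $|\alpha|\ll X^{1/6}$, whence $|\cS_G(X)|=O(X^{1/6})=O(X^{1/e(G)})$, absorbable into the error. Subtracting from $|\cM_G(X)|$ yields the claimed asymptotic for $|\cE_G(X)|$. The main technical subtlety lies in the M\"obius step: one must verify that the torsion structure is genuinely preserved under the rescaling $(A,B)\mapsto(A/d^4,B/d^6)$ and that every rescaled pair already sits inside the image of $\Phi_G$. Both facts follow from the $\bQ$-isomorphism argument together with \cite[Theorem 1.1]{GT}, after which the remaining manipulations are routine and parallel the proof of \cite[Theorem 5.6]{HS}.
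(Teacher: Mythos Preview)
Your proof is correct and follows essentially the same strategy as the paper: count lattice points in $R_G(X)$ via Lemma~\ref{lem:pointsinRGX}, pass to $|\cD_G(X)|$ using the $r(G)$-to-one property of Lemma~\ref{lem:preimage rG} (with the index-two sublattice correction for $G=\bZ/2\bZ\times\bZ/2\bZ$), then apply M\"obius inversion for minimality and discard the singular locus. The only minor variation is that the paper (cf.\ Proposition~\ref{cMcD}) justifies the bijection $\cD_G^{(d)}(X)\cong\cD_G(X/d^{12})$ directly from the weighted homogeneity $f_G(d^{m}a,d^{n}b)=d^4f_G(a,b)$, $g_G(d^{m}a,d^{n}b)=d^6g_G(a,b)$, whereas you obtain it from the $\bQ$-isomorphism $E_{A,B}\cong E_{A/d^4,B/d^6}$ combined with \cite{GT}; your route is a touch more conceptual but requires the (harmless) observation that the singular pairs, where \cite{GT} does not apply, number only $O(X^{1/6})$.
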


\begin{lemma} \label{DDtilde}
For a prime $p \geq 5$, a non-zero $J$  in $(\bZ/p\bZ)^2$ and a group G in $\mathcal{G}_{\leq 4}$,
\begin{equation*}
|\cD_{G, J}(X)| 
 = \frac{ | W_{G, J} | }{2^{\delta_{G=2\times 2}}r(G)}|R_{G, I}(X)| + O(1+ p^{-1}X^{\frac{1}{ e(G)   }} )
\end{equation*}
where $I \in W_{G,J}$.
For $G \in \cG_{\geq 5}$, we have
\begin{align*}
	|\cE_{G, J}(X)| = \frac{|W_{G, J}|}{r(G)} \sum_{I \in W_{G,J}}\sum_{e} |M_{I}^e(X)| + O(1 + p^{-1}X^{\frac{1}{e(G)}}).
\end{align*}
\end{lemma}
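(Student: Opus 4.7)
The strategy is to count preimages of $\Phi_G$ in each residue class modulo $p$ and divide by the generic fibre size $r(G)$.

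For $G \in \cG_{\leq 4}$, I begin with a point $(A,B) \in \cD_{G,J}(X)$. By Lemma \ref{lem:preimage rG}, such a point has exactly $r(G)$ preimages in $R_G(X) \cap \bZ^2$ under $\Phi_G = (f_G,g_G)$, except for an exceptional set of cardinality $O(X^{1/e(G)})$. The preimages of points congruent to $J$ modulo $p$ are exactly the pairs $(a,b) \in R_G(X) \cap \bZ^2$ with $(f_G,g_G)(a,b) \equiv J \pmod p$, and these decompose into $|W_{G,J}|$ residue classes modulo $p$, one for each $I \in W_{G,J}$. Hence
\[
r(G) \cdot |\cD_{G,J}(X)| \;=\; \sum_{I \in W_{G,J}} |R_{G,I}(X)| + O(1+p^{-1}X^{1/e(G)}).
\]
For $G = \bZ/2\bZ \times \bZ/2\bZ$, I additionally impose $a \equiv b \pmod 2$ to ensure an integral image, and by the Chinese Remainder Theorem (since $p \geq 5$) this halves the count, yielding the factor $2^{\delta_{G=2\times 2}}$. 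By Corollary \ref{Dtildeest}, $|R_{G,I}(X)|$ has a leading term independent of $I$ with error $O(1 + p^{-1}X^{1/e(G)})$; since $|W_{G,J}|$ is uniformly bounded in $p$ for $G \in \cG_{\leq 4}$ (by $6$, using Proposition \ref{prop:sum of WGI}), I may replace the sum by $|W_{G,J}| \cdot |R_{G,I}(X)|$ for any fixed $I \in W_{G,J}$ with an error of the same order, then divide by $r(G)$.

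For $G \in \cG_{\geq 5}$, the argument is parallel but each preimage $(a,b)$ now carries a defect $e = e(a,b) \in \{1,2,3,6\}$ by Lemma \ref{lem:defect}. Since $\gcd(e,p) = 1$ for $p \geq 5$, the congruence $\Phi_G(a,b) = (f_G/e^4, g_G/e^6)(a,b) \equiv J \pmod p$ translates cleanly into a mod $p$ condition on $(a,b)$ supported in a suitable twist of $W_{G,J}$. I partition $\widetilde{M}_G(X)$ by defect, apply Lemma \ref{assumption} in place of Lemma \ref{lem:preimage rG} to obtain the $r(G)$-to-one count away from an exceptional set of size $O(X^{1/e(G)})$, and then sum over $e$ and residue classes to arrive at the stated identity.

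The principal difficulty is extracting an error of shape $O(1 + p^{-1} X^{1/e(G)})$ rather than just $O(X^{1/e(G)})$. The exceptional locus in Lemma \ref{lem:preimage rG} consists of integer points on a finite union of explicit plane curves (where $\Phi_G$ has a nongeneric fibre), so the contribution from a single residue class mod $p$ is $O(1 + X^{1/e(G)}/p)$ by standard bounds on integer points of bounded-degree curves restricted to fixed residue classes; summing over the boundedly many residues in $W_{G,J}$ preserves this shape. In the $\cG_{\geq 5}$ case, a further bookkeeping task is to match the several defect-twisted versions of $W_{G,J}$ against the single factor $|W_{G,J}|$ on the right-hand side, which again uses coprimality of the defect to $p$ to identify these sets up to a linear change of representative.
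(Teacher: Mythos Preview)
Your approach is essentially the same as the paper's: fiber $\Phi_G$ over the residue classes $I \in W_{G,J}$, use the generic $r(G)$-to-one property from Lemma~\ref{lem:preimage rG} (resp.\ Lemma~\ref{assumption}), and bound the exceptional locus restricted to a fixed residue class by $O(1 + p^{-1}X^{1/e(G)})$. One small correction: the uniform bound on $|W_{G,J}|$ should come from B\'ezout's theorem ($|W_{G,J}| \leq \deg f_G \cdot \deg g_G$, as the paper does) or from Lemma~\ref{prop:value of WGI}, not from Proposition~\ref{prop:sum of WGI}, which only records sums over the singular locus; and your explicit handling of the defect-twisted sets $W_{G,e\ast J}$ for $\cG_{\geq 5}$ is actually more careful than the paper's proof, which simply asserts the surjection and appeals to ``the above argument and an estimate of $\cS_J(X)$''.
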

\begin{proof}
We fix a group $G$ and omit it from subscription. 
 For $G \in \cG_{\leq 4}$, $\Phi$ induces a surjective map 
 \begin{align*}
 \bigsqcup_{I \in W_J} R_{I}(X) \to  \cD_{J}(X), \qquad (a,b) \rightarrow (A,B)= \Phi(a,b).
 \end{align*}
 Let $h_{J}(X)$ be the number of the $2$-tuples $(A,B)$ for which its pre-image is not equal to $r(G)$. Then, $h_J(X)$ is bounded by $O(p^{-1}X^{\frac{1}{e(G)}})$ by the proof of Lemma \ref{lem:preimage rG}.
We note that  the number of solutions of a system of equations
 \begin{equation*}
 f(a,b) =A \qquad \textrm{and } \qquad g(a,b) = B
 \end{equation*}
 is less than or equal to $\deg f \cdot \deg g$ by Bezout's theorem and $|R_I(X)|$ does not depend on $I$ by Corollary \ref{Dtildeest}. 
 Therefore, we have
 \begin{equation*}
 |\cD_{J}(X)|   = \frac{|W_{J}|}{2^{\delta_{G=2\times 2}}r(G)}|R_{I}(X)| + O\lbrb{h_{J}(X) }.
 \end{equation*}

For $G \in \cG_{\geq 5}$, $\Phi$ induces a surjective map
\begin{align*}
	\bigcup_{I \in W_J}\bigcup_{e}M_{I}^{e}(X) \to \cE_{J}(X) \bigcup \cS_J(X).
\end{align*}
Hence, the above argument and an estimate of $\cS_J(X)$ give a similar result.
\end{proof}

For a pair $(A,B)$ of integers or elements of $\bZ/p\bZ$ and an integer $d$, we define an operation $*$ by $d*(A,B) = (d^4A, d^6B)$.

\begin{proposition} \label{cMcD}
For a prime $p \geq 5$, non-zero $J \in (\bZ/p\bZ)^2$, and a group $G$ in $\mathcal{G}_{\leq 4}$,
\begin{equation*}
|\cM_{G, J}(X)| = \sum_{\substack{d \leq X^{\frac{1}{12}} \\ p \nmid d}}
\mu(d) |\cD_{G,  d^{-1}*J}(d^{-12}X)|,
\end{equation*}
and $|\cS_{G, J}(X)| = O(X^{\frac{1}{6}}/p)$.
\end{proposition}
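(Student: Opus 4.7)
The plan is a Möbius inversion to reduce the minimality condition defining $\cM_G$ to divisibility conditions, then use a quasi-homogeneity property of $\Phi_G$ to identify the resulting divisibility-restricted counts with dilated copies of $\cD_G$; the bound on $\cS_{G,J}(X)$ is a separate elementary parametric count.

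For the main identity, the minimality condition is equivalent to $\delta(A,B) := \max\{d \geq 1 : d^4 \mid A,\ d^6 \mid B\} = 1$, so
\begin{equation*}
  \mathbf{1}_{\cM_G}(A,B) \;=\; \mathbf{1}_{\cD_G}(A,B) \sum_{d \,:\, d^4 \mid A,\, d^6 \mid B} \mu(d).
\end{equation*}
Summing over $(A,B) \in \cD_{G,J}(X)$ and exchanging the order of summation yields a sum over $d$ of $|\{(A,B) \in \cD_{G,J}(X) : d^4 \mid A,\ d^6 \mid B\}|$. Terms with $p \mid d$ vanish, since they would force $(A,B) \equiv (0,0) \bmod p$, contradicting $J \neq 0$; the truncation $d \leq X^{1/12}$ comes from $d^4 \leq |A| \leq X^{1/3}$ in the typical case, or $d^6 \leq |B| \leq X^{1/2}$ in the boundary case $A=0$ (where $B \neq 0$ since $J \neq 0$). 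It then remains to exhibit, for $p \nmid d$, a bijection
\[
  \{(A,B) \in \cD_{G,J}(X) : d^4 \mid A,\ d^6 \mid B\} \;\longleftrightarrow\; \cD_{G,\, d^{-1} * J}(d^{-12}X)
\]
given by $(A,B) \mapsto d^{-1}*(A,B) = (A/d^4, B/d^6)$.

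The bijection rests on a quasi-homogeneity property of $\Phi_G$: for each $G \in \cG_{\leq 4}$ there are weights $(\alpha_G, \beta_G) = (4,2),(1,3),(2,1),(2,2)$ on $(a,b)$ for $G = \bZ/2\bZ, \bZ/3\bZ, \bZ/4\bZ, \bZ/2\bZ \times \bZ/2\bZ$ respectively, such that $\Phi_G(d^{\alpha_G} a, d^{\beta_G} b) = d * \Phi_G(a,b)$; this is a direct verification from (\ref{eqn:def fGgG}). The backward direction of the bijection is then immediate by scaling, using that $(a', b') \in R_G(d^{-12} X)$ if and only if $(d^{\alpha_G} a', d^{\beta_G} b') \in R_G(X)$. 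The forward direction demands that if $(A,B) = \Phi_G(a,b)$ with $d^4 \mid A$ and $d^6 \mid B$, then $d^{\alpha_G} \mid a$ and $d^{\beta_G} \mid b$, so that one can exhibit a preimage of $(A/d^4, B/d^6)$ under $\Phi_G$ by dividing $(a,b)$. This reduces, prime by prime in $d$, to a $p'$-adic valuation analysis on the defining polynomials: the cases $G = \bZ/2\bZ, \bZ/3\bZ$ are in \cite[Lemma 5.5]{HS}, and $G = \bZ/4\bZ, \bZ/2\bZ \times \bZ/2\bZ$ go through by the same valuation-matching, exploiting that the leading monomials of $f_G, g_G$ in $a$ or $b$ control the valuation and let one peel off the required powers of $d$ term by term. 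This case-by-case $p'$-adic bookkeeping is the main technical obstacle; none of the individual cases is hard, but each has to be matched against the specific monomial structure of $\Phi_G$.

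For the bound on $|\cS_{G,J}(X)|$, every integer solution of $4A^3 + 27B^2 = 0$ has the form $(A,B) = (-3t^2, 2t^3)$ for some $t \in \bZ$, and the height bounds force $|t| = O(X^{1/6})$. The congruence $(-3t^2, 2t^3) \equiv J \pmod p$ determines $t^2 \bmod p$, hence $t \bmod p$ up to a sign which is further pinned down by the cube (using $p \geq 5$), so $t$ lies in at most $O(1)$ residue classes modulo $p$. Consequently the number of admissible $t$ in $[-O(X^{1/6}), O(X^{1/6})]$ is $O(X^{1/6}/p)$, and since $\cS_{G,J}(X)$ injects into this parameter set the stated bound follows.
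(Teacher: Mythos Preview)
Your argument is correct and follows the same route as the paper: a M\"obius inversion on the minimality defect $d$, the observation that $p\nmid d$ since $J\neq 0$, and the quasi-homogeneity $\Phi_G(d^{\alpha_G}a,d^{\beta_G}b)=d*\Phi_G(a,b)$ (with the check that $a/d^{\alpha_G},b/d^{\beta_G}$ are integers) to set up the bijection with $\cD_{G,d^{-1}*J}(d^{-12}X)$. Your treatment of $|\cS_{G,J}(X)|$ via the parametrization $(A,B)=(-3t^2,2t^3)$ and the congruence pinning $t\bmod p$ is exactly the ``easy'' estimate the paper alludes to; the only cosmetic point is that the count of such $t$ is really $O(X^{1/6}/p+1)$, the extra $O(1)$ being harmless since it is absorbed into the $X^{1/12}$ error in the subsequent proposition.
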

\begin{proof}
Let $(A, B) \in \cD_{G, J}(X)$ and let $d$ be the maximum of $d'$ satisfying $d'^4 \mid A$ and $d'^6 \mid B$. 
Since $J$ is non-zero, $p \nmid d$.
By (\ref{eqn:def fGgG}), the definition of $f_G$ and $g_G$, one can easily check that there are positive integers $m$ and $n$ depending $G$ such that 
\begin{equation*}
\frac{1}{d^4} f_G(a,b) = f_G\left(\frac{a}{d^{m}}, \frac{b}{d^{n}} \right), \qquad
\frac{1}{d^6} g_G(a,b) = g_G\left(\frac{a}{d^{m}}, \frac{b}{d^{n}} \right),
\end{equation*}
for given $G$. 
Also, we can check that  $a/d^{m}$ and $b/d^{n}$ are integers.
Hence if $(A, B) = (f_G(a,b), g_G(a,b))$ for some  $a, b\in\bZ$, then  
\begin{equation*}
d^{-1}*(A, B) = \left(f_G \left(\frac{a}{d^{m}}, \frac{b}{d^{n}} \right), g_G\left(\frac{a}{d^{m}}, \frac{b}{d^{n}}\right)\right)
\end{equation*}
is an element of $\cM_{G, d^{-1}*J}(d^{-12}X)$ and 
there is a bijection
$$(A,B) \to d^{-1}*(A,B), \qquad \cD_{G, J}(X) \to \bigsqcup_{\substack{d \leq X^{\frac{1}{12}} \\ p \nmid d}} \cM_{G, d^{-1}*J}(d^{-12}X).$$
By M\"obius inversion argument, the first equality follows.
The error term is easy to establish.
\end{proof}

\begin{proposition} \label{prop:tor cE}
For a non-zero $2$-tuple $J \in (\bZ/p\bZ)^2$ where $p \geq 5$, $G$ in $\mathcal{G}_{\leq 4}$  we have
\begin{align*}
|\cE_{G, J}(X)| = c(G)\frac{|W_{G,J}| }{p^{2}}\frac{p^{\frac{12}{d(G)}}}{p^{\frac{12}{d(G)}}-1} X^{\frac{1}{d(G)}} +
O(p^{-1}X^{\frac{1}{e(G)}}   + X^{\frac{1}{12}}). 
\end{align*}
For $J = (0,0)$, we have
\begin{align*}
|\cE_{G, J}(X)| = c(G) \lbrb{ \frac{1}{p^2} - \frac{1}{p^{\frac{12}{d(G)}}}} 
\frac{p^{\frac{12}{d(G)}}}{p^{\frac{12}{d(G)}}-1} X^{\frac{1}{d(G)}} + O(pX^{\frac{1}{e(G)}}   + p^2 X^{\frac{1}{12} }). 
\end{align*}

\end{proposition}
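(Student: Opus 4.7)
The plan is to treat the two cases with a division: for nonzero $J$ we apply a Möbius inversion/sieve directly, and for $J = (0,0)$ we subtract from the global count $|\cE_G(X)|$.

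For nonzero $J$, start from $|\cE_{G,J}(X)| = |\cM_{G,J}(X)| - |\cS_{G,J}(X)|$. Proposition \ref{cMcD} already gives $|\cS_{G,J}(X)| = O(X^{1/6}/p)$, which is absorbed into the claimed error since $e(G)\leq 6$. For the main term, Proposition \ref{cMcD} reduces $|\cM_{G,J}(X)|$ to a Möbius sum of $|\cD_{G, d^{-1}*J}(d^{-12}X)|$, and each summand is estimated by Lemma \ref{DDtilde} combined with Corollary \ref{Dtildeest}. The key observation is the \emph{scaling invariance}
\begin{align*}
  |W_{G,\,d^{-1}*J}| = |W_{G,J}| \qquad \text{for all } d \in (\bZ/p\bZ)^\times.
\end{align*}
This follows because, for each $G \in \cG_{\leq 4}$, inspection of (\ref{eqn:def fGgG}) produces integers $m_G,n_G$ with $\Phi_G(d^{m_G}a, d^{n_G}b) = d * \Phi_G(a,b)$; one may take $(m_G,n_G) = (4,2), (1,3), (2,1), (2,2)$ for $G=\bZ/2\bZ, \bZ/3\bZ, \bZ/4\bZ, \bZ/2\bZ\times\bZ/2\bZ$ respectively. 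So $(a,b)\mapsto (d^{m_G}a, d^{n_G}b)$ is a bijection $W_{G,d^{-1}*J}\to W_{G,J}$, letting us factor $|W_{G,J}|$ out of the Möbius sum.

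Pulling out $|W_{G,J}|$ and evaluating the Euler product
\begin{align*}
  \sum_{\substack{d\geq 1 \\ p\nmid d}} \mu(d)\, d^{-12/d(G)} = \prod_{\ell \neq p}\lbrb{1 - \ell^{-12/d(G)}} = \frac{1}{\zeta(12/d(G))}\cdot \frac{p^{12/d(G)}}{p^{12/d(G)} - 1}
\end{align*}
and using the definition of $c(G)$ from Corollary \ref{cor:HS4 22case} recovers the claimed main term. Truncating at $d\leq X^{1/12}$ leaves a tail bounded by $O(X^{1/12})$ since $12/d(G) \geq 2$, and the accumulated error from the DDtilde/Dtildeest approximations contributes $O(X^{1/12} + p^{-1}X^{1/e(G)})$ because $\sum_d d^{-12/e(G)}$ converges ($12/e(G) > 1$ for $G\in\cG_{\leq 4}$).

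For $J=(0,0)$, one cannot directly invoke Proposition \ref{cMcD} since the proof there requires $J$ nonzero. Instead, use the global identity
\begin{align*}
  |\cE_{G,(0,0)}(X)| = |\cE_G(X)| - \sum_{J\neq (0,0)} |\cE_{G,J}(X)|.
\end{align*}
Corollary \ref{cor:HS4 22case} gives $|\cE_G(X)| = c(G)X^{1/d(G)} + O(X^{1/e(G)})$. The nonzero-fiber formula just proved, combined with $|W_{G,(0,0)}| = 1$ (from the first paragraph of the proof of Proposition \ref{prop:sum of WGI}) and the total $\sum_J|W_{G,J}| = p^2$, yields $\sum_{J\neq 0}|W_{G,J}| = p^2 - 1$. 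Substituting and simplifying the algebraic factor
\begin{align*}
  1 - \frac{p^2-1}{p^2}\cdot \frac{p^{12/d(G)}}{p^{12/d(G)}-1} = \lbrb{\frac{1}{p^2} - \frac{1}{p^{12/d(G)}}}\frac{p^{12/d(G)}}{p^{12/d(G)}-1}
\end{align*}
produces the asserted main term. The error is at most $p^2$ copies of the nonzero-fiber error, giving $O(pX^{1/e(G)} + p^2 X^{1/12})$.

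The main obstacle is the scaling identity $|W_{G,d^{-1}*J}| = |W_{G,J}|$, without which the Möbius sum cannot be factored cleanly; fortunately the quasi-homogeneity of $(f_G,g_G)$ is visible directly from (\ref{eqn:def fGgG}) in each of the four cases, so this reduces to a short bookkeeping check.
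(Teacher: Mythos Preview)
Your proposal is correct and follows essentially the same route as the paper's proof: for nonzero $J$ you combine Proposition~\ref{cMcD}, Lemma~\ref{DDtilde}, and Corollary~\ref{Dtildeest} via M\"obius inversion, using the quasi-homogeneity of $(f_G,g_G)$ to identify $|W_{G,d^{-1}*J}|=|W_{G,J}|$, and for $J=(0,0)$ you subtract the nonzero fibers from the global count of Corollary~\ref{cor:HS4 22case}. Your explicit verification of the weights $(m_G,n_G)$ and the tail/error bookkeeping are a touch more detailed than the paper's writeup, but the argument is the same.
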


\begin{proof}
For $d$ not divisible by $p$,
we note that $|W_{G, d^{-1}*J}| = |W_{G, J}|$ for all $p \nmid d$. Then, 
\begin{align*}
|\cE_{G, J}(X)|&=  |\cM_{G,  J}(X)| +O(\cS_{G, J}(X) )
= \sum_{\substack{d \leq X^{\frac{1}{12}} \\ p \nmid d}} \mu(d)|\cD_{G, d^{-1}*J}(\frac{X}{d^{12}})| 
+ O\lbrb{ \frac{X^{\frac{1}{6}}}{p} }
\\
&= \sum_{\substack{d \leq X^{\frac{1}{12}} \\ p \nmid d}} \mu(d) 
\left( \frac{|W_{G, d^{-1}*J} | }{2^{\delta_{G=2\times 2}}r(G)}  | R_{G, I}(\frac{X}{d^{12}})|
+ O\lbrb{ 1+ p^{-1 } \frac{X^\frac{1}{e(G)}}{d^{\frac{12}{e(G)}}} }\right) + O\lbrb{ \frac{X^{\frac{1}{6}}}{p} } \\
&= \frac{ | W_{G,J} |}{2^{\delta_{G=2\times 2}}r(G)} \sum_{\substack{d \leq X^{\frac{1}{12}} \\ p \nmid d}} \mu(d)  |R_{G, I}(\frac{X}{d^{12}})|
+ O\lbrb{ X^{\frac{1}{12}} + \frac{X^{\frac{1}{e(G)}}}{p} },
\end{align*}
by Lemma \ref{DDtilde}.
Here we also used that $|R_{G, I}(X)|$ does not depend on $I \in W_J$.
Using Corollary \ref{Dtildeest}, the sum is
\begin{align*}
&= \frac{ |W_{G,J} | \Area(R_G(1))}{2^{\delta_{G=2\times 2}}r(G) p^{2}} \sum_{\substack{d \leq 
X^{\frac{1}{12}} \\ p \nmid d}} \mu(d) \lbrb{  \frac{X^{\frac{1}{d(G)}} }{ d^{\frac{12}{d(G)}  } }+ O\left(\frac{p X^{\frac{1}{e(G)}}}{  d^{\frac{12}{e(G)}} } +p^2 \right)  } 
  + O\lbrb{ X^{\frac{1}{12}} +  \frac{X^{\frac{1}{e(G)}}}{p} } \\
 &= c(G) \frac{| W_{G, J} |}{p^{2}  }\frac{p^{\frac{12}{d(G)}}}{p^{\frac{12}{d(G)}}-1} X^{\frac{1}{d(G)}} + O\left( \frac{X^{\frac{1}{e(G)}  }}{p} +  X^{\frac{1}{12}}  \right) .
\end{align*}

By \cite[Theorem 5.6]{HS} and Corollary \ref{cor:HS4 22case}, the main term of 
\begin{align*}
|\cE_{G, (0,0)}(X)| &= |\cE_G(X)| - \sum_{J \neq (0,0)} |\cE_{G,  J}(X)| 
\end{align*}
is
\begin{align*}
c(G)X^{\frac{1}{d(G)}} -\sum_{J \neq (0,0)}c(G) \frac{ | W_{G,J} | }{p^{2}}\frac{p^{\frac{12}{d(G)}}}{p^{\frac{12}{d(G)}}-1} X^{\frac{1}{d(G)}}  &=
c(G)X^{\frac{1}{d(G)}} \lbrb{\frac{p^{\frac{12}{d(G)}}-1}{p^{\frac{12}{d(G)}}} -  \frac{\sum_{ J \neq (0,0)}| W_{G,J}| }{p^{2}} }
\frac{p^{\frac{12}{d(G)}}}{p^{\frac{12}{d(G)}}-1} \\
&= c(G)X^{\frac{1}{d(G)}} \lbrb{ \frac{1}{p^2} - \frac{1}{p^{\frac{12}{d(G)}}}}
\frac{p^{\frac{12}{d(G)}}}{p^{\frac{12}{d(G)}}-1}.
\end{align*}
This gives the main term, and the error term is easily checked. 
\end{proof}

 For torsion $G$, we define
\begin{align*}
	c_{G, \cLC}(p) = \sum_{E_I \textrm{ satisfies } \cLC} \frac{|W_{G, I}|}{p^2}.
\end{align*}

\begin{theorem} \label{2tor bad}
For a prime $p \geq 5$, a local condition $\cLC$, and a group $G $ in $ \mathcal{G}_{\leq 4}$, 
\begin{equation*}
|\cE_{G, p}^{\cLC}(X)| = c(G) \cdot  c_{G,\cLC}(p) \cdot \frac{p^{\frac{12}{d(G)}}}{p^{\frac{12}{d(G)}}-1} X^{\frac{1}{d(G)}} + O(h_{G,\cLC}(p, X))
\end{equation*}
where $c_{G,\cLC}(p)$ is 
\begin{equation*}
\begin{array}{|c|c|c|c|c|}
\hline
	& 2 & 3 & 4 &  2\times 2  \\
	\hline
\mathrm{good}	& (p-1)^2/p^2 & (p-1)^2/p^2 & (p-1)(p-2)/p^2 & (p-1)(p-2)/p^2 \\
	\hline
\mathrm{mult}	& (2p - 2)/p^2 & (2p - 2)/p^2 & (3p - 3)/p^2 & (3p - 3)/p^2 \\
	\hline
\mathrm{addi}	&  1/p^2 - 1/p^{6}   &  1/p^2 - 1/p^{4}  &  1/p^2 - 1/p^{3}  &  1/p^2 - 1/p^{4}  \\
\hline
a & H_2(a,p)/p^2 & H_3(a,p)/p^2 & H_4(a,p)/p^2 & H_{2\times 2}(a,p)/p^2 \\
\hline
\end{array}
\end{equation*}
and
\begin{equation*}
c_{3, \mathrm{split}}(p) =  
\begin{cases}  2(p-1)/p^2 &  \mbox{ for $p\equiv 1$ mod $12$, }  \\ 
(p-1)/p^2 &  \mbox{ for $p\equiv 5$ or $11$ mod $12$, } \\
0 &  \mbox{ for $p\equiv 7$ mod $12$.}
\end{cases}
\end{equation*}
Finally for $\epsilon > 0$, the function $h_{G,\cLC}(p,X)$ is 
\begin{equation*}
\begin{array}{|c|c|}
\hline
	& h_{G,\cLC}(p,X)\\
	\hline
\mathrm{good/bad}	& pX^{\frac{1}{e(G)}} + p^2X^{\frac{1}{12} }  \\
	\hline
\mathrm{mult}	& X^{\frac{1}{e(G)}} + pX^{\frac{1}{12}}  \\
	\hline
\mathrm{split}	& X^{\frac{1}{e(G)}} + pX^{\frac{1}{12} }  \\
	\hline
\mathrm{addi}	& pX^{\frac{1}{e(G)}} + p^2X^{\frac{1}{12} }  \\
\hline
a & H_G(a,p)(p^{-1}X^{\frac{1}{e(G)}} + X^{\frac{1}{12}} ) \\
\hline
\end{array}
\end{equation*}

\end{theorem}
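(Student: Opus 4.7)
The strategy is to sum the pointwise formula in Proposition \ref{prop:tor cE} over the set $S_{\cLC} \subset (\bZ/p\bZ)^2$ of residues $J$ for which $E_J$ satisfies the local condition $\cLC$, using Proposition \ref{prop:sum of WGI} to evaluate the resulting combinatorial sums. For $p \geq 5$ the reduction types partition $(\bZ/p\bZ)^2$ as follows: $J$ gives good reduction when $4A^3 + 27B^2 \not\equiv 0 \pmod p$; multiplicative when $4A^3 + 27B^2 \equiv 0$ and $J \neq (0,0)$; and additive precisely when $J = (0,0)$, since in characteristic $\geq 5$ a nodal singularity of $y^2 = x^3 + Ax + B$ degenerates to a cusp only when $A \equiv B \equiv 0$. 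Splitting
\[
|\cE_{G,p}^{\cLC}(X)| = \sum_{J \in S_\cLC} |\cE_{G,J}(X)|,
\]
applying the first formula of Proposition \ref{prop:tor cE} for each non-zero $J$ and the second formula when $J = (0,0)$, and using the definition $c_{G,\cLC}(p) = p^{-2}\sum_{J \in S_\cLC} |W_{G,J}|$ directly yields the claimed main term.

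To convert this into the explicit constants in the table, I would invoke Proposition \ref{prop:sum of WGI}: for good reduction, $\sum_{J \in S_{\mathrm{good}}} |W_{G,J}| = p^2 - \sum_{4A^3+27B^2\equiv 0} |W_{G,J}|$, which evaluates to $(p-1)^2$ for $G \in \{\bZ/2\bZ, \bZ/3\bZ\}$ and $(p-1)(p-2)$ for $G \in \{\bZ/4\bZ, \bZ/2\bZ \times \bZ/2\bZ\}$; for multiplicative reduction one subtracts $|W_{G,(0,0)}|=1$ (since $\Phi_G(0,0) = (0,0)$ is the unique preimage for $p \geq 5$) from the corresponding row of Proposition \ref{prop:sum of WGI}; for additive reduction the single contribution $J = (0,0)$ combined with the second formula of Proposition \ref{prop:tor cE} produces the $1/p^2 - 1/p^{12/d(G)}$ factor directly; and for the trace-weight condition $a_p(E_J) = a$ the sum is $H_G(a,p)$ by definition \eqref{eqn:defCap}. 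The split-multiplicative case for $G = \bZ/3\bZ$ is immediate from \eqref{eqn:bias split nonsplit for G=3}, since that sum is precisely taken over the parametrizations $J = (-3\alpha^2, 2\alpha^3)$ for which the node has rational tangent directions.

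The error term $h_{G, \cLC}(p, X)$ arises by multiplying the per-$J$ error of Proposition \ref{prop:tor cE} by the number of $J$ in $S_\cLC$ with $|W_{G,J}| > 0$. For good or bad reduction, the count is $O(p^2)$ and the per-$J$ error $O(p^{-1}X^{1/e(G)} + X^{1/12})$ totals $O(p X^{1/e(G)} + p^2 X^{1/12})$; for multiplicative or split reduction the count is $O(p)$, giving $O(X^{1/e(G)} + p X^{1/12})$; the additive case involves only $J = (0,0)$ but comes with the already-inflated error $O(p X^{1/e(G)} + p^2 X^{1/12})$ from the second formula of Proposition \ref{prop:tor cE}; and for the fixed-trace condition the count of nonzero $|W_{G,J}|$'s is bounded above by $H_G(a, p)$. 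The main obstacle, largely isolated from the bookkeeping, is the split-multiplicative case for $G = \bZ/3\bZ$: distinguishing split from non-split multiplicative reduction requires a quadratic-character analysis of the node's tangent directions, and producing the three-case formula in terms of $p \bmod 12$ depends on combining the character of $3\alpha$ with the previously obtained factorization pattern of $h_{-3\alpha^2, 2\alpha^3}(x)$ used in the proof of Proposition \ref{prop:sum of WGI}.
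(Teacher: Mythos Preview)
Your proposal is correct and follows essentially the same approach as the paper: sum Proposition~\ref{prop:tor cE} over the residues $J$ satisfying $\cLC$, then feed in the combinatorial evaluations from Proposition~\ref{prop:sum of WGI} (and equation~\eqref{eqn:bias split nonsplit for G=3} for the split case). The paper's own proof only writes out the good-reduction case explicitly and dismisses the rest with ``The other cases can be shown similarly,'' so your case-by-case bookkeeping for the constants and the error-term counts is in fact more detailed than what appears there.
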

\begin{proof}
By Proposition \ref{prop:tor cE},
\begin{align*}
|\cE_{G, p}^{\textrm{good}}(X)| &= \sum_{\substack{ J = (A, B) \in \bF_p^2 \\ 4A^3 + 27 B^2 \not\equiv 0}} |\cE_{G, J}(X)|
\\
&= \sum_{\substack{ J = (A, B) \in \bF_p^2 \\ 4A^3 + 27 B^2 \not\equiv 0}}c(G)\frac{| W_{G,J} | }{p^2}\frac{p^{\frac{12}{d(G)}}}{p^{\frac{12}{d(G)}}-1} X^{\frac{1}{d(G)}} +  O\left( p(p-1) \left(\frac{X^{\frac{1}{e(G)}}}{p} + X^{\frac{1}{12} } \right)  \right).
\end{align*}
By Propositions \ref{prop:sum of WGI}, we have
\begin{align*}
&\sum_{\substack{ J = (A, B) \in \bF_p^2 \\ 4A^3 + 27 B^2 \not\equiv 0}} | W_{2, J} | =\sum_{\substack{ J = (A, B) \in \bF_p^2 \\ 4A^3 + 27 B^2 \not\equiv 0}}|W_{3, J} | = (p-1)^2, \\
&\sum_{\substack{ J = (A, B) \in \bF_p^2 \\ 4A^3 + 27 B^2 \not\equiv 0}} | W_{4, J} | = 
\sum_{\substack{ J = (A, B) \in \bF_p^2 \\ 4A^3 + 27 B^2 \not\equiv 0}} | W_{2\times 2, J} | = (p-1)(p-2).
\end{align*}
This proves good reduction cases. 
The other cases can be shown similarly.
\end{proof}

For $G \in \cG_{\geq 5}$,
we note that 
\begin{align*}
	|\widetilde{M}_G(X)| = \frac{\Area(R(1))}{\zeta(2)}X^{\frac{1}{d(G)}} + O(X^{\frac{1}{2d(G)}} \log X)
\end{align*}
by the M\"obius inversion and the Principle of Lipschitz. 
For details, we refer to the proof of the following lemma.

\begin{lemma} \label{lem:MI for Ggeq}
For arbitrary prime power $p^m$ and a pair $I \in (\bZ/p^m\bZ)^2$ whose coordinates are not divided by $p$ simultaneously,
\begin{align*}
	|\widetilde{M}_{G, I}(X)| = \frac{1}{p^{2m}} \frac{p^2}{p^2-1} \frac{\Area(R(1))}{\zeta(2)}X^{\frac{1}{d(G)}} + O(X^{\frac{1}{2d(G)}} +p^{-m}X^{\frac{1}{2d(G)}} \log X).
\end{align*}
\end{lemma}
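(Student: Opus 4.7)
The plan is to combine a M\"obius inversion to peel off the coprimality condition with the Principle of Lipschitz to count lattice points in a bounded region cut by a congruence class.

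First I would write
\begin{align*}
|\widetilde{M}_{G,I}(X)| = \sum_{d \geq 1} \mu(d) \cdot N_d(X),
\end{align*}
where $N_d(X)$ counts integer pairs $(a,b) \in R_G(X)$ with $d \mid \gcd(a,b)$ and $(a,b) \equiv I \pmod{p^m}$. The hypothesis that at least one coordinate of $I$ is coprime to $p$ forces every $d$ contributing a nonzero term to satisfy $\gcd(d,p) = 1$, since otherwise $d \mid a$ and $d \mid b$ together with the congruence would force $p \mid I_1$ and $p \mid I_2$. For such a $d$, the substitution $(a,b) = (da',db')$ rescales the region to $R_G(X/d^{2d(G)})$, using $3\deg f_G = 2\deg g_G = 2d(G)$ so that $R_G(Y)$ dilates by $Y^{1/(2d(G))}$ in each coordinate. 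Moreover $d$ is invertible modulo $p^m$, so the congruence condition on $(a',b')$ becomes $(a',b') \equiv (d^{-1}I_1, d^{-1}I_2) \pmod{p^m}$.

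Next I would apply the Principle of Lipschitz to count lattice points of $R_G(Y)$ lying in a fixed residue class modulo $p^m$, which yields
\begin{align*}
	\frac{\Area(R_G(1))\, Y^{1/d(G)}}{p^{2m}} + O\!\lbrb{\frac{Y^{1/(2d(G))}}{p^m} + 1}.
\end{align*}
Setting $Y = X/d^{2d(G)}$ and summing over squarefree $d \leq X^{1/(2d(G))}$ coprime to $p$ (the region is empty beyond this range), the main term becomes
\begin{align*}
	\frac{\Area(R_G(1))\, X^{1/d(G)}}{p^{2m}} \sum_{\substack{d \geq 1 \\ \gcd(d,p)=1}} \frac{\mu(d)}{d^2} = \frac{1}{p^{2m}} \frac{p^2}{p^2-1} \frac{\Area(R_G(1))}{\zeta(2)} X^{1/d(G)},
\end{align*}
after evaluating the Euler product $\prod_{\ell \neq p}(1-\ell^{-2}) = (1-p^{-2})^{-1}/\zeta(2)$.

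The step requiring the most care is the error-term bookkeeping. Truncating the infinite M\"obius sum at $X^{1/(2d(G))}$ introduces a tail error of size $O(X^{1/(2d(G))}/p^{2m})$, which is absorbed. The first Lipschitz error, summed via $\sum_{d \leq X^{1/(2d(G))}} 1/d = O(\log X)$, contributes $O(p^{-m} X^{1/(2d(G))} \log X)$, while the $O(1)$ Lipschitz error summed over at most $X^{1/(2d(G))}$ values of $d$ contributes $O(X^{1/(2d(G))})$. Combining yields the advertised error $O(X^{1/(2d(G))} + p^{-m} X^{1/(2d(G))} \log X)$. The only mild subtlety is justifying that the congruence class together with the dilation of $R_G$ really fits the hypotheses of the Principle of Lipschitz uniformly in $d$; this is standard once one verifies that $R_G(1)$ has rectifiable boundary of bounded Lipschitz constant, which follows from $f_G$ and $g_G$ being fixed polynomials.
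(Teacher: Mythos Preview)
Your proposal is correct and follows essentially the same route as the paper: M\"obius inversion over $d$ coprime to $p$ to remove the coprimality constraint, rescaling to $R_G(X/d^{2d(G)})$, and then the Principle of Lipschitz (which the paper packages as Corollary~\ref{Dtildeest}) to count lattice points in a congruence class, with the same bookkeeping for the main term via $\prod_{\ell\neq p}(1-\ell^{-2})$ and for the two error contributions. The only cosmetic difference is that the paper phrases the inversion via the bijection $R_{G,I}(X)\cong\bigsqcup_{d}\, d\cdot\widetilde{M}_{G,d^{-1}I}(d^{-2d(G)}X)$ before inverting, whereas you write the M\"obius sum directly.
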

\begin{proof}
For a given $I$, we have a bijection
\begin{align*}
	R_{G, I}(X) \cong \bigsqcup_{\substack{d \leq X^{\frac{1}{2d(G)}} \\ p \nmid d}} d * \widetilde{M}_{G, d^{-1}*I}(d^{-2d(G)}X), \qquad
	(a, b) \to d*\lbrb{\frac{a}{d}, \frac{b}{d}},
\end{align*}
where $d$ is the gcd of $a$ and $b$. By M\"obius inversion argument and Corollary \ref{Dtildeest}, we have 
\begin{align*}
	|\widetilde{M}_{G, I}(X)| &= \sum_{\substack{d \leq X^{\frac{1}{2d(G)}} \\ p \nmid d}} \mu(d) |R_{G, d^{-1}*I}(d^{-2d(G)}X)| \\
	&= \sum_{\substack{d \leq X^{\frac{1}{2d(G)}} \\ p \nmid d}} \mu(d)
	\lbrb{ \frac{1}{p^{2m}} \Area(R(1))  \frac{ X^{\frac{1}{d(G)}} }{d^2} + O(p^{-m}d^{-1}X^{\frac{1}{2d(G)}})} \\
	& = \frac{1}{p^{2m}} \frac{p^2}{p^2-1} \frac{\Area(R(1))}{\zeta(2)}X^{\frac{1}{d(G)}} + O(X^{\frac{1}{2d(G)}} + p^{-m}X^{\frac{1}{2d(G)}} \log X).
\end{align*}
\end{proof}

\begin{theorem} \label{local_count}
	Let $G$ be a torsion subgroup in $\cG_{\geq 5}$, $p \geq 5$ be a prime, and $J$ be a  non-zero element of $(\bZ/p\bZ)^2$.
Then, there is an absolute constant $c(G)$ such that
\begin{equation*}
	|\cE_{G, J}(X)| = \frac{|W_{G,J}|}{p^2-1} c(G) X^{\frac{1}{d(G)}} + O(X^{\frac{1}{2d(G)}} + p^{-1}X^{\frac{1}{2d(G)}} \log X).
\end{equation*}
\end{theorem}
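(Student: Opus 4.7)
The plan is to extend the method of Proposition \ref{prop:tor cE} from $\cG_{\leq 4}$ to the $\cG_{\geq 5}$ setting, relying on the defect analysis of Lemma \ref{lem:defect} and the point-counting estimate in Lemma \ref{lem:MI for Ggeq}.

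First I would invoke Lemma \ref{DDtilde}. Reading its inner quantity $\sum_e |M_I^e(X)|$ as independent of the choice of $I \in W_{G,J}$ (the count behaves symmetrically under mod-$p$ translation of $I$, so the sum over $I \in W_{G,J}$ is really $|W_{G,J}|$ times the value at any fixed representative), one obtains
\begin{align*}
|\cE_{G,J}(X)| = \frac{|W_{G,J}|}{r(G)} \sum_e |M_I^e(X)| + O\lbrb{1 + p^{-1} X^{1/e(G)}}
\end{align*}
for any fixed $I \in W_{G,J}$. Since $e(G) = 2d(G)$ for $G \in \cG_{\geq 5}$, this error term is already of the shape demanded by the theorem.

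Next, I would rewrite each term as $M_I^e(X) = \widetilde{M}_I^e(e^{12} X)$: a pair of defect $e$ satisfies $h(\Phi_G(a,b)) \leq X$ iff $|f_G(a,b)| \leq e^4 X^{1/3}$ and $|g_G(a,b)| \leq e^6 X^{1/2}$, which are the size conditions defining $\widetilde{M}_I^e(e^{12}X)$. By Lemma \ref{lem:defect} (together with Remark \ref{rem:defect} to complete the two missing cases $\bZ/2\bZ \times \bZ/6\bZ$ and $\bZ/2\bZ \times \bZ/8\bZ$), for coprime $(a,b)$ the defect $e$ is determined by $(a,b) \pmod 6$; let $\rho_G^e \in [0,1]$ denote the resulting density. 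I would then upgrade Lemma \ref{lem:MI for Ggeq} to accommodate this additional congruence condition. Because $\gcd(p,6)=1$ for $p \geq 5$, the Principle of Lipschitz together with Mobius inversion over divisors coprime to $p$ can be carried out modulo $6p$ with no structural change, yielding
\begin{align*}
|\widetilde{M}_I^e(X)| = \rho_G^e \cdot \frac{1}{p^2 - 1} \cdot \frac{\Area(R_G(1))}{\zeta(2)} X^{1/d(G)} + O\lbrb{X^{1/(2d(G))} + p^{-1} X^{1/(2d(G))} \log X}.
\end{align*}

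Finally, the substitution $X \mapsto e^{12} X$ multiplies each main term by $e^{12/d(G)}$, and summing over the allowed defects $e \in \{1,2,3,6\}$ absorbs all defect-dependent factors into the $p$-independent constant
\begin{align*}
c(G) = \frac{\Area(R_G(1))}{r(G)\zeta(2)} \sum_e \rho_G^e \cdot e^{12/d(G)},
\end{align*}
producing the stated main term $\tfrac{|W_{G,J}|}{p^2-1} c(G) X^{1/d(G)}$ and absorbing all error contributions into $O(X^{1/(2d(G))} + p^{-1} X^{1/(2d(G))} \log X)$. The main obstacle I expect is the mod-$6p$ upgrade of Lemma \ref{lem:MI for Ggeq}: one must verify that the mod 6 defect condition interacts cleanly with the square-free divisors in the Mobius inversion so that both the factor $\tfrac{p^2}{p^2-1}$ and the Euler factor $\zeta(2)^{-1}$ emerge unchanged. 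A secondary technical point is completing the defect enumeration for the two omitted groups along the lines of Remark \ref{rem:defect} before the argument applies to them uniformly.
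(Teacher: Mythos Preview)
Your proposal is correct and follows essentially the same route as the paper: both reduce via Lemma \ref{DDtilde} to $\widetilde{M}_I^e(e^{12}X)$, invoke the defect classification of Lemma \ref{lem:defect}, apply Lemma \ref{lem:MI for Ggeq} with an extra congruence modulo $\epsilon$ (your ``mod $6p$ upgrade'' is exactly what the paper does when it writes ``By Lemma \ref{lem:MI for Ggeq} and CRT''), and sum over defects to assemble $c(G)$. Your density $\rho_G^e$ is the paper's $|I_e|/(\epsilon^2-1)$, and your formula $c(G)=\frac{\Area(R_G(1))}{r(G)\zeta(2)}\sum_e \rho_G^e\, e^{12/d(G)}$ agrees with the paper's explicit constant; the only cosmetic difference is that the paper keeps the sum $\sum_{I\in W_{G,J}}$ rather than pulling out $|W_{G,J}|$ (the displayed formula in Lemma \ref{DDtilde} has a typo), and treats each residue class $I'\in I_e$ separately instead of packaging them into a density.
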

\begin{proof}
We use the strategy of \cite[\S 3]{CKV}.
Let $\epsilon = \epsilon(G)$ be a positive integer which is the least common multiplier of the possible defects of $(f_G, g_G)$ which is well-defined by Lemma \ref{lem:defect}.
Since $M_G^e(X) = \widetilde{M}_G^e(e^{12}X)$,
\begin{align*}
	|\cE_{G, J}(X)| &= \frac{1}{r(G)}\sum_{I \in W_J}\sum_{e \mid \epsilon}|M^e_{G, I}(X)| + O(1+p^{-1}X^\frac{1}{e(G)}) = \frac{1}{r(G)}\sum_{I \in W_{G,J}} \sum_{e \mid \epsilon}|\widetilde{M}^e_{G, I}(e^{12}X)| + O(1+p^{-1}X^{\frac{1}{2d(G)}}),
\end{align*}
by Lemma \ref{lem:preimage rG}, Lemma \ref{assumption}  and Lemma \ref{DDtilde}.
We note that  the defect of the given pair $(a, b)$ is determined by its reduction modulo $\epsilon$ by Lemma \ref{lem:defect} for $G \in \cG_{\geq 5}$ except $\bZ/2\bZ \times \bZ/6\bZ$ and $\bZ/2\bZ \times \bZ/8\bZ$, and modulo $\epsilon^6$ for $G=\bZ/2\bZ \times \bZ/6\bZ$ and $\bZ/2\bZ \times \bZ /8\bZ$.

We consider the case of $\epsilon > 1$, and for simplicity we assume that $\epsilon$ is prime. 
Let $I_e$ be the set of pairs $(\bZ/\epsilon\bZ)^2$ which has a defect $e$.
Then,
\begin{align*}
	\widetilde{M}_{G, I}^e(X) = \bigsqcup_{I' \in I_e} \widetilde{M}_{G, I, I'}(X),
\end{align*}
where $\widetilde{M}_{G, I, I'}(X)$ is a subset of $\widetilde{M}_{G, I}(X)$ where the additional condition $(a,b)\equiv I'$  $\pmod{\epsilon}$ is imposed.
Since $\epsilon$ is a prime, $e = 1$ or $\epsilon$.
By Lemma \ref{lem:MI for Ggeq} and CRT, 
\begin{align*}
	|\widetilde{M}_{G, I}^{\epsilon}(X)| =  \frac{|I_\epsilon|}{(\epsilon^2-1)}
	\frac{1}{(p^2-1)} \frac{\Area(R(1))}{\zeta(2)}X^{\frac{1}{d(G)}} + O(X^{\frac{1}{2d(G)}} +  p^{-1}X^{\frac{1}{2d(G)}}\log X),
\end{align*}
and  
\begin{align*}
	|\widetilde{M}_{G, I}^1(X)| =\frac{(\epsilon^{2} - 1 - |I_\epsilon|)}{(\epsilon^2-1)}
	\frac{1}{(p^2-1)} \frac{\Area(R(1))}{\zeta(2)}X^{\frac{1}{d(G)}} + O(X^{\frac{1}{2d(G)}} +  p^{-1}X^{\frac{1}{2d(G)}} \log X).
\end{align*}
Therefore,
\begin{align*}
	|\cE_{G, J}(X)| &= \frac{|W_{G,J}|}{p^2-1} 
\frac{	((\epsilon^{\frac{12}{d(G)}} - 1)|I_{\epsilon}| + \epsilon^{2} - 1)}{(\epsilon^2-1)} \frac{1}{r(G)}  \frac{\Area(R(1))}{\zeta(2)} X^{\frac{1}{d(G)}} + O(X^{\frac{1}{2d(G)}} +  p^{-1}X^{\frac{1}{2d(G)}} \log X).
\end{align*}
Similarly, for the groups with no defect, we have
\begin{align*}
	|\cE_{G, J}(X)| &=  \frac{|W_{G,J}|}{p^2-1} \frac{1}{r(G)} \frac{\Area(R(1))}{\zeta(2)}
	X^{\frac{1}{d(G)}} + O(X^{\frac{1}{2d(G)}} +  p^{-1}X^{\frac{1}{2d(G)}}\log X).
\end{align*}
By taking $c(G)=\frac{	((\epsilon^{\frac{12}{d(G)}} - 1)|I_{\epsilon}| + \epsilon^{2} - 1)}{(\epsilon^2-1)} \frac{1}{r(G)}  \frac{\Area(R(1))}{\zeta(2)}$ where the first term exists only if $\epsilon \neq 1$, the claim follows.
When $\epsilon$ is not prime (only appear when $G = \bZ/12\bZ$, $\bZ/2\bZ \times \bZ/6\bZ,$ $\bZ/2\bZ \times \bZ/8\bZ$ by Lemma \ref{lem:defect}), we can compute $c(G)$ similarly.
\end{proof}

Our proof gives $c(G)$ concretely except when $G = \bZ/2\bZ \times \bZ/6\bZ$ and $\bZ/2\bZ \times \bZ/8\bZ$.
Even for such $G$, if one know the defects (see Remark \ref{rem:defect}) then can calculate $c(G)$ precisely.

Proposition \ref{prop:sum of WGI} and Theorem \ref{local_count} analogously give results like Theorem \ref{2tor bad}.
Instead of listing them all, we record the results which will be used in the applications.

\begin{corollary} \label{cor:large G local cond}
For 
$G \in \cG_{\geq 5}$ and a prime $p \geq 5$,
\begin{align} \label{count_torsion}
|\cE_G(X)|&=c(G)X^{\frac{1}{d(G)}}+O(X^{\frac{1}{e(G)}}), \nonumber \\ 
|\cE_{G,p}^a(X)|&= c(G)
\frac{H_G(a,p)}{p^2-1} X^{\frac{1}{d(G)}}+O\left(H_G(a,p)X^\frac{1}{e(G)}+\frac{H_G(a,p)}{p}X^{\frac{1}{e(G)}} \log X \right), \\
|\cE_{G,p}^{\mathrm{mult}}(X)|&=O\left(\frac{1}{p}X^{\frac{1}{d(G)}}+ p X^{\frac{1}{e(G)}} +X^{\frac{1}{e(G)}} \log X \right). \nonumber
\end{align}
\end{corollary}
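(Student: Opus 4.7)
The plan is to deduce all three identities of Corollary~\ref{cor:large G local cond} from Theorem~\ref{local_count} by partitioning $\cE_G(X)$ according to the reduction class $J=(A,B)\pmod p$ and using Proposition~\ref{prop:sum of WGI} to control the weighted sums $\sum_J|W_{G,J}|$ in each regime.

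For the first identity, note that summing Theorem~\ref{local_count} over all $J\neq(0,0)$ would recover the main term $c(G)X^{1/d(G)}$ (since $\sum_{J\neq(0,0)}|W_{G,J}|=p^2-1$), but the error would accumulate to $O(p^2 X^{1/(2d(G))})$, which is too weak. I would therefore rerun the argument of the proof of Theorem~\ref{local_count} \emph{without} the local condition at $p$: apply Lemma~\ref{lem:MI for Ggeq} with $m=0$ (equivalently, the Principle of Lipschitz plus M\"obius inversion over $d$), then sort by defect using Lemma~\ref{lem:defect} and Lemma~\ref{assumption} to assemble the contribution of each defect class. Equivalently, this identity is already \cite[Theorem 3.3.1]{CKV}.

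For the second identity I would write
\[
|\cE^a_{G,p}(X)|=\sum_{\substack{J=(A,B)\in(\bZ/p\bZ)^2\\ a_p(E_J)=a\\ 4A^3+27B^2\not\equiv 0}}|\cE_{G,J}(X)|,
\]
apply Theorem~\ref{local_count} to each summand, and collect the main terms via the very definition of $H_G(a,p)$ to obtain $c(G)\frac{H_G(a,p)}{p^2-1}X^{1/d(G)}$. For the error, every $J$ with $W_{G,J}\neq\emptyset$ satisfies $|W_{G,J}|\ge1$, so the number of such residues is bounded by $H_G(a,p)$, while residues with $|W_{G,J}|=0$ contribute only through the error term in Theorem~\ref{local_count}. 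Summing the per-$J$ error $O(X^{1/e(G)}+p^{-1}X^{1/e(G)}\log X)$ then yields the stated bound. For the third identity, I would sum Theorem~\ref{local_count} over $J=(A,B)\neq(0,0)$ with $4A^3+27B^2\equiv 0\pmod p$ (the multiplicative locus). Proposition~\ref{prop:sum of WGI} shows $\sum_J|W_{G,J}|=O(p)$ for every $G\in\cG_{\geq 5}$, so the main-term aggregate is $O(p^{-1}X^{1/d(G)})$; since there are only $O(p)$ such residues, the per-$J$ error sums to $O(pX^{1/e(G)}+X^{1/e(G)}\log X)$.

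The main obstacle is the error bookkeeping in the second identity: the per-$J$ error in Theorem~\ref{local_count} is uniform in $J$ and independent of the weight $|W_{G,J}|$, so rewriting the total error in terms of $H_G(a,p)$ requires the elementary but crucial observation that $|W_{G,J}|\ge 1$ whenever $W_{G,J}$ is non-empty, together with a careful treatment of the residues with $|W_{G,J}|=0$ where rational elliptic curves with $G$-torsion may still reduce (as in the example preceding \S\ref{subsec:moments of class numbers}) but whose count is absorbed by the per-$J$ error of Theorem~\ref{local_count}.
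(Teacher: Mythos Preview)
Your approach is the same as the paper's: the corollary is simply stated as a consequence of Proposition~\ref{prop:sum of WGI} and Theorem~\ref{local_count}, obtained by summing the latter over the relevant residues $J$. Your treatment of the first and third identities is fine.

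There is a small gap in your error bookkeeping for the second identity. You bound the number of $J$ with $a_p(E_J)=a$ and $W_{G,J}\neq\emptyset$ by $H_G(a,p)$, which is correct, but you then say the residues with $|W_{G,J}|=0$ are ``absorbed by the per-$J$ error of Theorem~\ref{local_count}''. That does not work as stated: the number of residues $J$ with $a_p(E_J)=a$ can be of order $p^{3/2+\epsilon}$, and multiplying this by the uniform per-$J$ error $O(X^{1/e(G)}+p^{-1}X^{1/e(G)}\log X)$ need not be dominated by $H_G(a,p)\cdot O(X^{1/e(G)}+\cdots)$ for an individual $a$. The cleaner fix is to observe from the \emph{proof} of Theorem~\ref{local_count} that its error term is actually $O\!\big(|W_{G,J}|\,(X^{1/e(G)}+p^{-1}X^{1/e(G)}\log X)\big)$ up to an $O(1+p^{-1}X^{1/e(G)})$ coming from Lemma~\ref{DDtilde}: the dominant error arises from applying Lemma~\ref{lem:MI for Ggeq} once for each $I\in W_{G,J}$. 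Summing this refined error over all $J$ with $a_p(E_J)=a$ gives $H_G(a,p)\cdot O(X^{1/e(G)}+p^{-1}X^{1/e(G)}\log X)$ directly, with no need to treat the $|W_{G,J}|=0$ residues separately. (Also note: the example you cite before \S\ref{subsec:moments of class numbers} concerns $E_J(\bF_p)$ having $G$-torsion when $|W_{G,J}|=0$; it is not directly about rational curves in $\cE_G$ reducing to $E_J$, which is what matters here.)
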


%

Theorems \ref{2tor bad}  and \ref{local_count} gives some results on the probability for elliptic curves with local condition.
 In particular, for $\cLC = \textrm{mult}$, we observe an interesting phenomenon. 
\begin{corollary} \label{p for mult}
The ratios of $c_{G, \mathrm{mult}}(p)$'s for $G \in \mathcal{G}_{\leq 4}$ and $p \geq 5$ are proportional to the number of cusps of the corresponding modular curves.
Also, there is a set of primes $S$ with positive density such that the ratios of $c_{G, \mathrm{mult}}(p)$'s for $G \in \mathcal{G}_{\geq 5}$ are proportional to the number of cusps of the corresponding modular curves when $p \in S$.
\end{corollary}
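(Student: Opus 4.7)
The plan is to read off $c_{G,\mathrm{mult}}(p)$ directly from the preceding results and match the leading coefficient in $p$ against the number of cusps of the relevant modular curve. For $G \in \cG_{\leq 4}$, Theorem~\ref{2tor bad} gives
\[
c_{G,\mathrm{mult}}(p) = \frac{(2p-2)}{p^2},\ \frac{(2p-2)}{p^2},\ \frac{(3p-3)}{p^2},\ \frac{(3p-3)}{p^2}
\]
for $G = \bZ/2\bZ,\bZ/3\bZ,\bZ/4\bZ,\bZ/2\bZ\times\bZ/2\bZ$ respectively. The number of cusps of $X_1(N)$ for $N = 2,3,4$ is $2,2,3$ and the number of cusps of $X(2)$ is $3$, and the four ratios $2:2:3:3$ agree with the ratios of the leading $p$-coefficients. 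This part is thus a direct verification.

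For $G \in \cG_{\geq 5}$, Theorem~\ref{local_count} and the definition of $c_{G,\mathrm{mult}}(p)$ reduce the claim to identifying the leading coefficient $c_G$ in
\[
\sum_{\substack{J \in (\bZ/p\bZ)^2\setminus\{0\}\\ 4A^3+27B^2 \equiv 0}} |W_{G,J}| = c_G\, p + O(1),
\]
since $c_{G,\mathrm{mult}}(p) = c_G/p + O(p^{-2})$. Proposition~\ref{prop:sum of WGI} already tabulates this sum as a piecewise expression in the congruence class of $p$ modulo a small modulus (together with a cubic residue condition for $\bZ/7\bZ$ and $\bZ/9\bZ$). Reading off the maximal value of $c_G$ in each row yields $4,4,6,6,8,8,10$ for $N = 5,6,7,8,9,10,12$ and $4,6,10$ for $\bZ/2\bZ\times\bZ/2m\bZ$ with $m = 2,3,4$. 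These match the cusp counts of $X_1(N)$ given by the formula
\[
\#\{\text{cusps of } X_1(N)\} = \tfrac{1}{2}\sum_{d \mid N}\varphi(d)\varphi(N/d)
\]
and the analogous formula for $X_1(2,2m)$, so the proportionality claim follows for each such $G$ on the appropriate congruence class of $p$.

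Finally, the density statement is obtained from Dirichlet and Chebotarev. For $G = \bZ/5\bZ,\bZ/6\bZ,\bZ/8\bZ,\bZ/10\bZ,\bZ/12\bZ$ and the $\bZ/2\bZ \times \bZ/2m\bZ$ cases, the maximizing condition is a congruence condition modulo $|G|$ (or $8$), and Dirichlet immediately gives a set $S$ of primes of positive density on which the leading coefficient of $c_{G,\mathrm{mult}}(p)$ equals the number of cusps. For $G = \bZ/7\bZ$ and $\bZ/9\bZ$, the maximizing condition additionally requires $\gamma_7$ or $\gamma_9$ to be a cube in $\bF_p[\sqrt{-3}]^\times$ or $\bF_p^\times$; applying Chebotarev to the splitting field $\bQ(\sqrt{-3},\gamma_7^{1/3})$ or $\bQ(\zeta_3,\gamma_9^{1/3})$ produces a positive density subset of primes on which the full maximizing condition is satisfied. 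The main subtlety is simply keeping the case analysis from Proposition~\ref{prop:sum of WGI} aligned with the correct modular curve $X_1(N)$ (or $X_1(2,2m)$), in particular in the $\bZ/2\bZ\times\bZ/6\bZ$ and $\bZ/2\bZ\times\bZ/8\bZ$ rows where the defect computation of Lemma~\ref{lem:defect} is incomplete; however only the leading coefficient in $p$ matters, so the unresolved defect only affects the lower-order constant $c(G)$ in Theorem~\ref{local_count} and not the proportionality asserted here.
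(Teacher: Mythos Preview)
Your proof is correct and follows essentially the same approach as the paper: read off $c_{G,\mathrm{mult}}(p)$ from Theorem~\ref{2tor bad} and Proposition~\ref{prop:sum of WGI}, match against the cusp counts of $X_1(N)$ and $X(2)$ (resp.\ $X_{\Gamma_1(M)\cap\Gamma(2)}$), and invoke Chebotarev for the positive-density set in the $\cG_{\geq 5}$ case. Your write-up is in fact more explicit than the paper's, which simply cites ``Chebotarev density theorem'' without naming the splitting fields for the $\bZ/7\bZ$ and $\bZ/9\bZ$ cubic-residue conditions; your identification of $\bQ(\sqrt{-3},\gamma_7^{1/3})$ and $\bQ(\zeta_3,\gamma_9^{1/3})$ is a useful addition. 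One minor sharpening: for both $\cG_{\leq 4}$ and $\cG_{\geq 5}$ (on the maximizing congruence class) the values are exactly $c\cdot(p-1)/p^2$ with $c$ the cusp number, so the proportionality is exact and not merely in the leading $p$-coefficient as your phrasing suggests.
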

\begin{proof}
One can easily compute that the numbers of cusps of modular curve $X_1(N)$ for $N = 1, 2, 3, 4$ and $X(2)$ are $1, 2, 2, 3, 3$, respectively (For example, see \cite[\S 3.9]{DS}). So we have the result for $G$ in $\mathcal{G}_{\leq 4}$ by Proposition \ref{prop:sum of WGI} and Theorem \ref{2tor bad}.
Also, the number of cusps of $X_1(N)$ for $N = 5, 6, 7, 8, 9, 10, 12$ and $X_{\Gamma_1(M) \cap \Gamma(2)}$ for $M = 4, 6, 8$ are $4, 4, 6, 6, 8, 8, 10$ and $4, 6, 10$.
For primes $p$ which satisfy the conditions that make $\sum_{4A^3 + 27B^2 \equiv 0} W_{G, (A, B)}$ largest among the possible values in Proposition \ref{prop:sum of WGI}, the proportion of $c_{G, \textrm{mult}}(p)$ for $G$ in $\mathcal{G}_{\geq 5}$ is coincide with above values.
Now Theorem \ref{local_count} and Chebotarev density theorem give $G$ in $\mathcal{G}_{\geq 5}$ part.
\end{proof}


It is well-known that every elliptic curve with torsion $G \in \mathcal{G}_{\geq 5}$ has semistable reduction at $p \nmid |G|$. We can confirm this phenomenon with probability 1. Also, we have the analogous result for $G$ in $\cG_{\leq 4}$.
\begin{corollary} \label{cor:semi} 
For $G$ in $\mathcal{G}_{\geq 5}$ and a prime $p \nmid 6|G|$, we have
\begin{align*}
\lim_{X \rightarrow \infty} \frac{|\cE_{G,p}^{\mathrm{ss}}(X)|}{|\cE_G(X)|}=1.
\end{align*}
 For a torsion subgroup $G$ in $\mathcal{G}_{\leq 4}$ and a prime $p \geq 5$,
\begin{equation*}
c_{G,\mathrm{ss}}(p)=1- \frac{1}{p^2}.
\end{equation*}
\end{corollary}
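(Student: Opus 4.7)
The overall strategy is to isolate the contribution from additive reduction, i.e.\ the fibre $J = (0,0) \in (\bZ/p\bZ)^2$, since for $p \geq 5$ a minimal Weierstrass model $y^2 = x^3 + Ax + B$ has cuspidal reduction at $p$ exactly when $p \mid A$ and $p \mid B$. Hence the semistable locus is the complement of the additive stratum,
\begin{equation*}
|\cE_{G, p}^{\mathrm{ss}}(X)| = |\cE_G(X)| - |\cE_{G, (0,0)}(X)|,
\end{equation*}
and the problem splits into computing or bounding the second term in each case.

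For $G \in \mathcal{G}_{\leq 4}$, I would substitute the asymptotics from Corollary~\ref{cor:HS4 22case} and from the $J = (0, 0)$ case of Proposition~\ref{prop:tor cE}. A short algebraic simplification gives
\begin{equation*}
1 - \lbrb{\frac{1}{p^2} - \frac{1}{p^{12/d(G)}}} \frac{p^{12/d(G)}}{p^{12/d(G)} - 1} = \lbrb{1 - \frac{1}{p^2}} \cdot \frac{p^{12/d(G)}}{p^{12/d(G)} - 1},
\end{equation*}
and matching this against the normalization used in Theorem~\ref{2tor bad} reads off $c_{G, \mathrm{ss}}(p) = 1 - 1/p^2$. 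As a cross-check, directly summing the \emph{good} and \emph{mult} rows of the table in Theorem~\ref{2tor bad} produces the same value in each of the four cases.

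For $G \in \mathcal{G}_{\geq 5}$ with $p \nmid 6|G|$, the claim strengthens to the assertion that $|\cE_{G, p}^{\mathrm{addi}}(X)| = 0$ identically, so the limit is exactly $1$. Additive reduction forces $p \mid A$ and $p \mid B$ in the minimal model $(A, B) = (f_G(a,b)/e^4, g_G(a,b)/e^6)$, and the resultant table inside the proof of Lemma~\ref{lem:defect} shows that the defect $e(a,b)$ is supported only at primes in $\{2, 3\} \cup \{\text{primes dividing } |G|\}$; the hypothesis $p \nmid 6|G|$ therefore implies $p \nmid e(a, b)$, and the divisibility conditions reduce to $p \mid f_G(a, b)$ and $p \mid g_G(a, b)$. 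By the opening observation of the proof of Proposition~\ref{prop:sum of WGI}, the only preimage of $(0, 0)$ under $(f_G, g_G) \bmod p$ is $(a, b) \equiv (0, 0)$, which contradicts the relative primality $(a, b) = 1$ built into the parametrization $(\ref{eqn:EG large})$, so no such pair exists.

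The one place a subtlety could creep in is the groups $\bZ/2\bZ \times \bZ/6\bZ$ and $\bZ/2\bZ \times \bZ/8\bZ$, which are excluded from the clean statement of Lemma~\ref{lem:defect}; but their tabulated LCMs of resultants involve only the primes $2$ and $3$, so the defect argument goes through verbatim and the proof needs no further modification.
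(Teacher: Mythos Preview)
Your argument is correct and in substance matches what the paper intends: the paper's own proof is just the one-line remark that Corollary~\ref{cor:semi} follows from Proposition~\ref{prop:sum of WGI}, Theorem~\ref{2tor bad}, and Theorem~\ref{local_count}, and you have filled in precisely those details.

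One small simplification is available for $G \in \mathcal{G}_{\leq 4}$: rather than passing through the $J=(0,0)$ case of Proposition~\ref{prop:tor cE} and then matching against the normalization in Theorem~\ref{2tor bad}, you can read $c_{G,\mathrm{ss}}(p)$ directly from its definition,
\[
c_{G,\mathrm{ss}}(p)=\sum_{J\neq(0,0)}\frac{|W_{G,J}|}{p^2}=\frac{p^2-|W_{G,(0,0)}|}{p^2}=\frac{p^2-1}{p^2},
\]
using only $\sum_J |W_{G,J}|=p^2$ together with the same ``only $(0,0)$ maps to $(0,0)$'' observation from Proposition~\ref{prop:sum of WGI}. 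Your good\,$+$\,mult cross-check is in effect this computation, so either route is fine.
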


As we can see in the \cite[Theorem 1.1]{CJ}, the number of elliptic curves with split and non-split reduction at $p$ are the same for all primes $p$.
When we consider elliptic curves with torsion $G$, this property no more holds.
\begin{corollary} \label{cor: split nonsplit}
For $G = \bZ/3\bZ$ and a prime $p \geq 5$, we have
\begin{equation*}
\lim_{X \to \infty} \frac{|\cE_{\bZ/3\bZ, p}^{\mathrm{split}}(X)|}{|\cE_{\bZ/3\bZ, p}^{\mathrm{mult}}(X)|}= \left\{
\begin{array}{lll}
\frac{1}{2} & \textrm{when } p \equiv 5, 11 \pmod{12}, \\
1 & \textrm{when } p \equiv 1 \pmod{12},  \\
0 & \textrm{when } p \equiv 7 \pmod{12}.
\end{array}
\right.
\end{equation*}
\end{corollary}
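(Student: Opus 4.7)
The plan is to directly apply Theorem \ref{2tor bad} twice, once with local condition $\cLC = \mathrm{mult}$ and once with $\cLC = \mathrm{split}$, and take the ratio of the leading terms. Since $G = \bZ/3\bZ$ lies in $\cG_{\leq 4}$ with $d(G) = 3$ and $e(G) = 4$, Theorem \ref{2tor bad} yields
\begin{align*}
|\cE_{\bZ/3\bZ, p}^{\mathrm{mult}}(X)| &= c(\bZ/3\bZ)\, c_{3,\mathrm{mult}}(p)\, \frac{p^{4}}{p^{4}-1}\, X^{1/3} + O\bigl(pX^{1/4} + p^{2}X^{1/12}\bigr), \\
|\cE_{\bZ/3\bZ, p}^{\mathrm{split}}(X)| &= c(\bZ/3\bZ)\, c_{3,\mathrm{split}}(p)\, \frac{p^{4}}{p^{4}-1}\, X^{1/3} + O\bigl(pX^{1/4} + p^{2}X^{1/12}\bigr).
\end{align*}
For each fixed prime $p$, the error terms are $o(X^{1/3})$, so the limit of the quotient is the quotient of the main coefficients; in particular
\begin{equation*}
\lim_{X \to \infty} \frac{|\cE_{\bZ/3\bZ, p}^{\mathrm{split}}(X)|}{|\cE_{\bZ/3\bZ, p}^{\mathrm{mult}}(X)|} = \frac{c_{3,\mathrm{split}}(p)}{c_{3,\mathrm{mult}}(p)}.
\end{equation*}

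Next I would simply plug in the explicit values of these two constants recorded in Theorem \ref{2tor bad}. The denominator is unconditional: $c_{3,\mathrm{mult}}(p) = (2p-2)/p^{2}$. The numerator, which ultimately comes from the identity \eqref{eqn:bias split nonsplit for G=3} of Proposition \ref{prop:sum of WGI}, equals $2(p-1)/p^{2}$ when $p \equiv 1 \pmod{12}$, equals $(p-1)/p^{2}$ when $p \equiv 5, 11 \pmod{12}$, and vanishes when $p \equiv 7 \pmod{12}$. Dividing in each case gives the three ratios $1$, $\tfrac{1}{2}$, and $0$ asserted in the statement.

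There is no real obstacle to overcome at this stage, since the delicate content has been absorbed into Theorem \ref{2tor bad} and Proposition \ref{prop:sum of WGI}: the mod $12$ dichotomy arises there from whether $-1$ and $3$ are quadratic residues modulo $p$, which controls how many of the singular pairs $(-3\alpha^{2}, 2\alpha^{3})$ lie in the image of $\Phi_{\bZ/3\bZ}$ with $\alpha$ a square (corresponding to split multiplicative reduction, via rational tangent slopes at the node). The only check worth making explicit is that for each residue class of $p$ modulo $12$ the algebraic simplification of $c_{3,\mathrm{split}}(p)/c_{3,\mathrm{mult}}(p)$ is independent of $p$, which is immediate from the factor of $(p-1)/p^{2}$ common to every row.
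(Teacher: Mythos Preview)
Your proposal is correct and follows exactly the route the paper indicates: the corollary is stated to follow from Proposition \ref{prop:sum of WGI} and Theorem \ref{2tor bad}, and you have simply written out that deduction explicitly by taking the ratio $c_{3,\mathrm{split}}(p)/c_{3,\mathrm{mult}}(p)$ of the leading constants. The only cosmetic slip is that for $\cLC \in \{\mathrm{mult},\mathrm{split}\}$ the error term in Theorem \ref{2tor bad} is the sharper $O(X^{1/4}+pX^{1/12})$ rather than $O(pX^{1/4}+p^{2}X^{1/12})$, but since both are $o(X^{1/3})$ for fixed $p$ this does not affect the argument.
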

We note that Corollaries \ref{cor:semi} and \ref{cor: split nonsplit} follow from Proposition \ref{prop:sum of WGI}, Theorem \ref{2tor bad} and \ref{local_count}.

\bigskip

In Section \ref{sec:app}, we establish the Frobenius Trace formula for elliptic curves when  $G = \bZ/2\bZ$ and  $\bZ/2\bZ \times \bZ/2\bZ$. For this purpose, we need to count elliptic curves with finitely many local conditions. Since its proof is similar with that of \cite[Theorem 8]{CJ}, we just introduce the notations and state the results.

Let $P = \lcrc{p_k}_k$ be a finite set of primes such that $p_k \geq 5$,
and $\cJ = \cJ_P$ be a finite set of 2-tuples $\lcrc{(A_k, B_k)}$ for $A_k, B_k \in \bZ/p_k\bZ$ such that $(A_k, B_k ) \not\equiv (0,0) \pmod{p_k}$.
We define analogously $\cM_{G, \cJ}(X) $, $\cE_{G, \cJ}(X)$,  $\cS_{G, \cJ}(X)$, and so on.
Let 
\begin{equation*}
	W_{G, \cJ} = \prod_{k} W_{G, J_k} \textrm{ for } J_k \equiv (A_k, B_k) \pmod{p_k}.
\end{equation*}
Then,
\begin{proposition} \label{prop:manyprimecE}
For $P = \lcrc{p_k}$ and $\cJ = \lcrc{(A_k, B_k)}$, $2$-tuples of $\bZ/p_k\bZ$ such that $(A_k, B_k) \not\equiv (0,0)$ for all $k$, and $G$ in $\mathcal{G}_{\leq 4}$, we have
\begin{align*}
 |\cE_{G, \cJ}(X)| =  c(G) | W_{G, \cJ} |\prod_k 
 \left( \frac{1}{p_k^{2}}\frac{p_k^{\frac{12}{d(G)}}}{p_k^{\frac{12}{d(G)}}-1} \right)X^{\frac{1}{d(G)}}  
+ O(\prod p_k^{-1}X^{\frac{1}{e(G)}}   + X^{ \frac{1}{12}}) .
 \end{align*}
\end{proposition}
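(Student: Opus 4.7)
The plan is to repeat the argument of Proposition \ref{prop:tor cE} but stratified by residue classes at several primes simultaneously, using the Chinese Remainder Theorem so that the local conditions multiply. Set $P^* = \prod_k p_k$. For a tuple $\cI = (I_k)$ with $I_k \in W_{G, J_k}$, let $R_{G,\cI}(X)$ denote the set of $(a,b) \in R_G(X) \cap \bZ^2$ with $(a,b) \equiv I_k \pmod{p_k}$ for every $k$. By CRT this is a single congruence condition modulo $P^*$, and the Principle of Lipschitz (exactly as in Corollary \ref{Dtildeest}) gives
\begin{equation*}
|R_{G,\cI}(X)| = \Area(R_G(1)) (P^*)^{-2} X^{1/d(G)} + O\bigl(1 + (P^*)^{-1} X^{1/e(G)}\bigr).
\end{equation*}

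The first step is to adapt Lemma \ref{DDtilde}: summing over all $\cI \in \prod_k W_{G, J_k}$ and dividing by the generic fiber size $2^{\delta_{G = 2\times 2}} r(G)$, one obtains
\begin{equation*}
|\cD_{G, \cJ}(X)| = \frac{|W_{G, \cJ}|}{2^{\delta_{G=2\times 2}} r(G)} \cdot |R_{G, \cI_0}(X)| + O\bigl(1 + (P^*)^{-1} X^{1/e(G)}\bigr),
\end{equation*}
for any fixed $\cI_0$, since $|R_{G, \cI}(X)|$ is independent of $\cI$ modulo this error and Lemma \ref{lem:preimage rG} absorbs the non-generic fibers into $O((P^*)^{-1} X^{1/e(G)})$ (only $O(X^{1/e(G)})$ exceptional points exist and each falls in one of the $(P^*)^2$ residue classes on average).

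Next I apply Möbius inversion to pass from $\cD_{G, \cJ}$ to $\cM_{G, \cJ}$, restricting the squarefree parameter $d$ to integers coprime to $P^*$ (this is automatic because $J_k \neq (0,0)$ forces $p_k \nmid d$ whenever $d * (A,B)$ is congruent to $J_k$ mod $p_k$). Crucially $|W_{G, d^{-1} * J_k}| = |W_{G, J_k}|$ for such $d$, so
\begin{equation*}
|\cM_{G, \cJ}(X)| = \sum_{\substack{d \leq X^{1/12} \\ (d, P^*) = 1}} \mu(d) \, |\cD_{G, d^{-1} * \cJ}(d^{-12} X)|.
\end{equation*}
Plugging in the asymptotic for $|\cD_{G, \cJ}|$ and evaluating the Dirichlet series
\begin{equation*}
\sum_{\substack{d \geq 1 \\ (d, P^*) = 1}} \frac{\mu(d)}{d^{12/d(G)}} = \frac{1}{\zeta(12/d(G))} \prod_k \frac{p_k^{12/d(G)}}{p_k^{12/d(G)} - 1}
\end{equation*}
produces the Euler factor at each $p_k$ and combines with $\Area(R_G(1))/(2^{\delta_{G=2\times 2}} r(G) \zeta(12/d(G))) = c(G)$ to yield the claimed main term. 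The singular contribution $|\cS_{G, \cJ}(X)| = O(X^{1/6}/P^*)$ is absorbed into the error, as are the Möbius tail terms.

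The main obstacle I expect is the careful bookkeeping of error terms through the Möbius sum: one must verify that summing the $O(1)$ error in the CRT estimate over $d \leq X^{1/12}$ produces only $O(X^{1/12})$, and that the $O((P^*)^{-1} d^{-12/e(G)} X^{1/e(G)})$ error sums to $O((P^*)^{-1} X^{1/e(G)})$ after multiplication by $|W_{G,\cJ}|$, which is absorbed as stated because $|W_{G, \cJ}| \ll (P^*)^{O(1)}$ and the Euler product converges. Beyond this, the argument is a direct multi-prime analogue of the single-prime proof.
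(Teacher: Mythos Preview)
Your approach is correct and is exactly what the paper intends: the paper omits the proof entirely, stating only that it is ``similar with that of \cite[Theorem 8]{CJ}''. Your argument is the natural multi-prime extension of Proposition~\ref{prop:tor cE}---CRT to reduce to a single modulus $P^* = \prod_k p_k$, the Lipschitz estimate of Corollary~\ref{Dtildeest} with $p$ replaced by $P^*$, the analogue of Lemma~\ref{DDtilde}, and M\"obius inversion restricted to $(d, P^*)=1$---which is precisely the route that \cite{CJ} takes and that the paper is pointing to.

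One small clarification: your remark that $|W_{G,\cJ}| \ll (P^*)^{O(1)}$ is more than you need; for $G \in \mathcal{G}_{\leq 4}$ each factor $|W_{G,J_k}|$ is bounded by a constant depending only on $G$ (by Lemma~\ref{prop:value of WGI} or B\'ezout), so $|W_{G,\cJ}| = O_G(1)^{|P|}$ and the implied constants in the error depend on $|P|$, which is how the proposition is used downstream (cf.\ the $O_k$ in Theorem~\ref{traceF}).
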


We will denote $\cS = (\cLC_{p_i})$ as a finite set of local conditions $\cLC_{p_i}$. 
When an elliptic curve has the local property corresponding to $\cLC_{p_i}$ at $p_i$ for all local conditions in $\cS$, we say that $E$ satisfies $\cS$. Let
\begin{equation*}
\cE^{\cS}_{G}(X) = \lcrc{(A, B) \in \cE_{G}(X) : E_{A, B} \textrm{ satisfies } \cS},
\end{equation*}
and
\begin{equation*} 
|\cLC_{p}|_G := \lim_{X \to \infty} \frac{|\cE_{G, p}^{\cLC_p}(X)|}{|\cE_{G}(X)|}, \qquad |\cS|_G = \prod_{i}|\cLC_{p_i}|_G.
\end{equation*}

Now we address that the local conditions under the torsion restriction are also independent.

\begin{theorem} \label{manyprime}
Let $P = \lcrc{p_k}$ and $\cS$ be a set of local conditions at $p_k$. Then, we have
\begin{equation*}
|\cE_{G}^{\cS}(X)| =   c(G) |\cS|_G X^{\frac{1}{d(G)}} +
O\left( \left( \prod_k p_k \right) X^{\frac{1}{e(G)}} + \left( \prod_k p_k \right)^2 X^\frac{1}{12} \right) 
\end{equation*}
We replace the exponents $1$ and $2$ of $p_k$ in the error term by $0$ and $1$ respectively when $\cLC$ is $\mathrm{multi}, \mathrm{split},$ or $\mathrm{non}\textrm{-}\mathrm{split}$. When $\cLC$ is $a$ in the Weil bound, $p_k$  and $p_k^2$ are replaced by $H_G(a,p_k)/p_k$ and $H_G(a,p_k)$ respectively. 
\end{theorem}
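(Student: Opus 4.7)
The plan is to decompose $\cE_G^\cS(X)$ according to residues modulo $N := \prod_k p_k$ and apply Proposition \ref{prop:manyprimecE} (and its $\cG_{\geq 5}$-analogue, obtained by the same argument from Theorem \ref{local_count}) class by class. For each local condition $\cLC_{p_k}$, let $R_{p_k} \subset (\bZ/p_k\bZ)^2$ be the set of $2$-tuples $J_k$ such that $E_{J_k}$ satisfies $\cLC_{p_k}$ when $J_k \neq (0,0)$, together with $(0,0)$ only when $\cLC_{p_k}$ is additive reduction. Then
\begin{equation*}
|\cE_G^\cS(X)| = \sum_{\cJ \in \prod_k R_{p_k}} |\cE_{G, \cJ}(X)|,
\end{equation*}
and I would apply Proposition \ref{prop:manyprimecE} to the non-zero tuples and adapt the second formula of Proposition \ref{prop:tor cE} to treat those coordinates where $J_k = (0,0)$ (as was done for a single prime in Theorem \ref{2tor bad}).

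Summing the main terms via a CRT-style factorization yields
\begin{equation*}
c(G) X^{\frac{1}{d(G)}} \prod_k \lp \frac{\sum_{J_k \in R_{p_k}} |W_{G, J_k}|}{p_k^{2}} \cdot \frac{p_k^{12/d(G)}}{p_k^{12/d(G)} - 1} \rp,
\end{equation*}
and by the definitions of $c_{G, \cLC_{p_k}}(p_k)$ (Theorem \ref{2tor bad}) and $|\cLC_{p_k}|_G$ the bracketed product equals $\prod_k |\cLC_{p_k}|_G = |\cS|_G$, giving the claimed main term $c(G) |\cS|_G X^{1/d(G)}$.

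For the error term, I would multiply the per-residue error bound from Proposition \ref{prop:manyprimecE}, namely $O(\prod_k p_k^{-1} X^{1/e(G)} + X^{1/12})$, by the cardinality $|\prod_k R_{p_k}|$. For generic conditions (good/additive), $|R_{p_k}| = O(p_k^2)$, producing $(\prod_k p_k) X^{1/e(G)} + (\prod_k p_k)^2 X^{1/12}$; for multiplicative/split/non-split one has $|R_{p_k}| = O(p_k)$, which drops one power of $p_k$ from each summand; and for the condition $a_p = a$, $|R_{p_k}| = H_G(a, p_k)$ by definition $(\ref{eqn:defCap})$, yielding the stated bound $H_G(a, p_k) p_k^{-1} X^{1/e(G)} + H_G(a, p_k) X^{1/12}$.

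The main obstacle is the bookkeeping around the zero residue: for additive reduction conditions the $(0,0)$ class must be included using the special formula of Proposition \ref{prop:tor cE}, while for every other local condition it must be excluded, and one must verify that the two kinds of main terms assemble consistently into a product $\prod_k c_{G, \cLC_{p_k}}(p_k) \cdot \prod_k \frac{p_k^{12/d(G)}}{p_k^{12/d(G)} - 1}$. Once this is checked, together with the error accumulation described above, the theorem follows as a direct combination of Theorem \ref{2tor bad}, Theorem \ref{local_count}, and Proposition \ref{prop:manyprimecE}; this is the analogue of \cite[Theorem 8]{CJ} in the presence of a prescribed torsion subgroup.
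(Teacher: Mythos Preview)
Your proposal is correct and matches the paper's approach: the paper does not actually write out a proof of this theorem, stating instead that ``its proof is similar with that of \cite[Theorem 8]{CJ}'' and only recording the notation and statements of Proposition \ref{prop:manyprimecE} and Theorem \ref{manyprime}. Your outline---decomposing $\cE_G^\cS(X)$ into residue classes modulo $\prod_k p_k$, applying Proposition \ref{prop:manyprimecE} to the non-zero classes and the analogue of the second formula in Proposition \ref{prop:tor cE} to the additive classes, then assembling the main terms multiplicatively via CRT and bounding the errors by $|R_{p_k}|$---is exactly the argument of \cite[Theorem 8]{CJ} adapted to the torsion-restricted family, as you yourself note at the end.
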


\section{Proofs of the Main Theorems} \label{sec:app}

\subsection{Boundedness of average analytic rank of elliptic curves with prescribed torsion group}

In this section, we show that average analytic rank of elliptic curves with prescribed torsion $G$ is bounded under the GRH for elliptic curve $L$-functions. Let $\phi$ be an even non-negative function with its Fourier transform $\widehat{\phi}$ compactly supported.  Let $\gamma_E$ denote the imaginary part of a non-trivial zero $\rho_E=\frac 12 +i \gamma_E$ of an elliptic curve $L$-function $L(s,E)$. By the explicit formula, we have
\begin{align*}
	\frac{1}{\left| \cE_{G}(X) \right|}\sum_{E \in \cE_{G}(X)} \sum_{\gamma_E}\phi \left( \gamma_E \frac{\log X }{2 \pi }\right) 
	& =\frac{ \widehat{\phi}(0)}{ \left| \cE_{G}(X) \right|}\sum_{E \in \cE_{G}(X)} \frac{ \log N_E}{\log X} + \frac{2}{\pi}\int_{-\infty}^\infty \phi \left( \frac{\log X \cdot r}{2 \pi } \right) \Re \frac{\Gamma_E'}{\Gamma_E}(\frac 12 +ir)dr \\
	&-\frac{2}{\log X \left| \cE_{G}(X) \right| }\sum_{n=1}^\infty \frac{\Lambda(n)}{\sqrt{n}}\widehat{\phi}\left( \frac{\log n}{\log X}\right) \sum_{E \in \cE_{G}(X)}\hat{a}_E(n) \\
	& \leq \widehat{\phi}(0) -\frac{2}{\log X \left| \cE_{G}(X) \right| }\sum_{n=1}^\infty \frac{\Lambda(n)}{\sqrt{n}}\widehat{\phi}\left( \frac{\log n}{\log X}\right) \sum_{E \in \cE_{G}(X)}\hat{a}_E(n) + O\left( \frac{1}{\log X}\right)\\
	&  \leq \widehat{\phi}(0) - S_1 -S_2 +  O\left( \frac{1}{\log X}\right),
\end{align*}
where 
\begin{equation*}
S_1=\frac{2}{\log X \left| \cE_{G}(X) \right| }\sum_{p} \frac{\log p}{\sqrt{p}}\widehat{\phi}\left( \frac{\log p}{\log X}\right) \sum_{E \in \cE_{G}(X)}\hat{a}_E(p),
\end{equation*}
and
\begin{equation*}
S_2=\frac{2}{\log X \left| \cE_{G}(X) \right| }\sum_{p} \frac{\log p}{p}\widehat{\phi}\left( \frac{2\log p}{\log X}\right) \sum_{E \in \cE_{G}(X)}\hat{a}_E(p^2).
\end{equation*}

From now on, for a positive constant $\sigma$ we specify the test function $\phi$ and $\widehat{\phi}$:
\begin{align*}
\widehat{\phi}(u)=\frac{1}{2}\left(\frac{1}{2}\sigma-\frac{1}{2}|u|\right)   \textrm{ for } |u|\leq \sigma, \quad \textrm{and } \quad \phi(x)=\frac{\sin^2(2\pi \frac 12 \sigma x)}{(2\pi x)^2}.
\end{align*}
Note that $\phi(0)=\frac{\sigma^2}{4}$ and $\widehat{\phi}_n(0)=\frac{\sigma}{4}$.

If we show
\begin{align} \label{sum S1 and S2}
-S_1-S_2= \frac{1}{2}\phi(0)+o(1),
\end{align}
by the positivity of $\phi$, we have
\begin{align} \label{average rank}
\frac{1}{\left| \cE_{G}(X) \right|}\sum_{E \in \cE_{G}(X)} r_E  \leq  \frac 12  + \frac{\widehat{\phi}(0)}{\phi(0)} +o(1)  \leq \frac{1}{2} + \frac{1}{\sigma} +o(1).  
\end{align}

Hence, it is left to show $(\ref{sum S1 and S2})$ holds for each torsion group $G$ with some explicit $\sigma$. For this purpose, we need the following lemmas.

\begin{lemma} \label{S1_torsion} For a torsion group $G$ in $\mathcal{G}_{\geq 5}$,
\begin{align*}
	\sum_{E \in \cE_{G}(X)}\hat{a}_E(p) \ll \left( \frac{X^{\frac{1}{d(G)}}}{p}+ p^2X^{\frac{1}{e(G)}} + pX^{\frac{1}{e(G)}}\log X \right). 
\end{align*}
For $G=\bZ/3\bZ$ or $\bZ/4\bZ$,
\begin{align*}
	\sum_{E \in \cE_{G}(X)}\hat{a}_E(p) \ll
\frac{X^\frac{1}{d(G)}}{p} + pX^\frac{1}{e(G)}+ p^{2}X^{{\frac{1}{12}}}.
\end{align*}
\end{lemma}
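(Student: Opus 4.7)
The plan is to partition $\cE_G(X)$ according to the residue class modulo $p$ of the Weierstrass coefficients, so that
\begin{equation*}
\sum_{E \in \cE_G(X)} \hat{a}_E(p) = \sum_{\substack{J \in (\bZ/p\bZ)^2 \\ 4A^3 + 27B^2 \not\equiv 0}} \frac{a_p(E_J)}{\sqrt{p}} |\cE_{G, J}(X)| + R_{\mathrm{bad}}(X),
\end{equation*}
where $R_{\mathrm{bad}}(X)$ collects the contribution from multiplicative and additive reduction (for additive reduction $\hat{a}_E(p) = 0$, while for multiplicative reduction $|\hat{a}_E(p)| = 1/\sqrt{p}$). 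Group the $J$ with a common value of $a_p(E_J) = a$, so that the cardinality $\sum_{a_p(E_J) = a} |W_{G, J}|$ is exactly $H_G(a, p)$ from (\ref{eqn:defCap}).

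For $G \in \cG_{\geq 5}$, substitute the asymptotic of Theorem \ref{local_count} into each $|\cE_{G, J}(X)|$. The main term becomes
\begin{equation*}
\frac{c(G)\, X^{1/d(G)}}{(p^2-1)\sqrt{p}} \sum_{|a| < 2\sqrt{p}} a\, H_G(a, p),
\end{equation*}
and the moment bound (\ref{moment_torsion 2}) dominates this by $O(X^{1/d(G)}/p)$. The error per residue class is $O(X^{1/(2d(G))} + p^{-1} X^{1/(2d(G))} \log X)$; summed over the $O(p^2)$ residue classes with $|\hat{a}_{E_J}(p)| \leq 2$ and using $e(G) = 2d(G)$ in this regime, we obtain the claimed $p^2 X^{1/e(G)} + p X^{1/e(G)} \log X$ contribution. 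Finally $R_{\mathrm{bad}}(X)$ is controlled via Corollary \ref{cor:large G local cond}: the number of curves with multiplicative reduction is $O(p^{-1} X^{1/d(G)} + p X^{1/e(G)} + X^{1/e(G)} \log X)$, which after division by $\sqrt{p}$ is well within the target bound.

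For $G = \bZ/3\bZ$ or $\bZ/4\bZ$, the same decomposition is applied, but $|\cE_{G, J}(X)|$ is taken from Proposition \ref{prop:tor cE}. The main term is identical in shape (again bounded by $O(X^{1/d(G)}/p)$ via (\ref{moment_torsion 2})), while the per-$J$ error $O(p^{-1} X^{1/e(G)} + X^{1/12})$ summed over $O(p^2)$ residues yields $p X^{1/e(G)} + p^2 X^{1/12}$. The bad-reduction contribution is handled by Theorem \ref{2tor bad} and is absorbed by these terms.

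The main obstacle is a clean bookkeeping of the error terms: one must simultaneously sum the per-residue-class errors from Theorem \ref{local_count} or Proposition \ref{prop:tor cE} against $\hat{a}_E(p)$ and verify that the cancellation in the main term coming from (\ref{moment_torsion 2}) dominates both the trivial summation over $O(p^2)$ residue classes and the bad-reduction contributions; the specific shape of the error in the lemma is what forces the choice to keep the factor $H_G(a,p)$ implicit (bounded via the moment estimates) rather than expanding it explicitly.
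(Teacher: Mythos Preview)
Your proposal is correct and follows essentially the same route as the paper. The only organizational difference is that the paper invokes the already-aggregated counts $|\cE_{G,p}^a(X)|$ and $|\cE_{G,p}^{\mathrm{mult}}(X)|$ from Corollary \ref{cor:large G local cond} (for $G\in\cG_{\geq 5}$) and Theorem \ref{2tor bad} (for $G=\bZ/3\bZ,\bZ/4\bZ$), whereas you work one level down with the per-residue-class estimates of Theorem \ref{local_count} and Proposition \ref{prop:tor cE} and then regroup by trace $a$; since the former results are obtained from the latter by exactly this summation, the two arguments are equivalent and yield the same error terms.
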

\begin{proof}
We know that 
\begin{align*}
	\sum_{E \in \cE_{G}(X)}\hat{a}_E(p)= \sum_{|a| < 2\sqrt{p}} 
	\sum_{\substack{E \in \cE_{G}(X) \\  a_E(p) =a  }}\hat{a}_E(p) 
	+ \sum_{\substack{E \in \cE_{G}(X) \\ \text{$E$ mult at $p$}}}\hat{a}_E(p).
\end{align*}

When $G \in \cG_{\geq 5}$
by Corollary \ref{cor:large G local cond},
\begin{align*}
\left| \sum_{\substack{E \in \cE_{G}(X) \\ \text{$E$ mult red at $p$}}}\hat{a}_E(p) \right| 
&\ll \frac{1}{p^\frac 32}X^{\frac{1}{d(G)}}+ p^\frac 12 X^{\frac{1}{e(G)}} +\frac{1}{p^\frac 12}X^{\frac{1}{e(G)}} \log X.
\end{align*}

By Corollary \ref{cor:large G local cond}, $(\ref{moment_torsion 1})$ and $(\ref{moment_torsion 2})$,
\begin{align*}
\sum_{|a| < 2\sqrt{p}} \sum_{\substack{E \in \cE_{G}(X) \\ a_E(p)=a}}\hat{a}_E(p)
&=\sum_{|a| < 2\sqrt{p}}\frac{a}{\sqrt{p}}\left ( c(G)\frac{H_G(a,p) }{p^2-1}  X^{\frac{1}{d(G)}}+O\left(H_G(a,p)X^\frac{1}{e(G)}+\frac{H_G(a,p)}{p}X^{\frac{1}{e(G)}} \log X  \right)\right) \\
&\ll \frac{X^{\frac{1}{d(G)}}}{p}+ p^2X^{\frac{1}{e(G)}} + pX^{\frac{1}{e(G)}}\log X.
\end{align*}

For $G=\bZ/3\bZ$ or $\bZ/4\bZ$, by Theorem \ref{2tor bad},
\begin{align*}
\left| \sum_{\substack{E \in \cE_{G}(X) \\ \text{$E$ mult red at $p$}}}\hat{a}_E(p) \right| 
& \ll  \frac{1}{\sqrt{p}}\left( \frac{1}{p}X^{\frac{1}{d(G)}}+ X^{\frac{1}{e(G)}} +pX^{\frac{1}{12}} \right) \ll \frac{1}{p^\frac 32}X^{\frac{1}{d(G)}}+ \frac{1}{p^\frac 12} X^{\frac{1}{e(G)}} + p^\frac 12 X^{\frac{1}{12}} .
\end{align*}
By Theorem \ref{2tor bad}, $(\ref{moment_torsion 1})$ and $(\ref{moment_torsion 2})$,
\begin{align*}
\sum_{|a| < 2\sqrt{p}} \sum_{\substack{E \in \cE_{G}(X) \\ a_E(p)=a}}\hat{a}_E(p)&=\sum_{|a| < 2\sqrt{p}}\frac{a}{\sqrt{p}} \left (c(G) \frac{H_G(a,p)}{p^2}\frac{p^{\frac{12}{d(G)}}}{p^{\frac{12}{d(G)}}-1}X^{\frac{1}{d(G)}}+O\left(H_G(a,p)(p^{-1}X^\frac{1}{e(G)}+ X^{\frac{1}{12}} ) \right)\right) \\
&\ll
\frac{X^\frac{1}{d(G)}}{p} + pX^\frac{1}{e(G)}+ p^{2}X^{\frac{1}{12}}.
\end{align*}
\end{proof}

\begin{lemma} \label{S2_torsion} For a torsion group $G$ in $\mathcal{G}_{\geq 5}$,
\begin{align*}
\sum_{E \in \cE_{G}(X)}\hat{a}_E(p^2)=-c(G)X^{\frac{1}{d(G)}}+ O\left(\frac{1}{p^{\frac 12}}X^{\frac{1}{d(G)}} + p^2X^\frac{1}{e(G)}+ pX^{\frac{1}{e(G)}}\log X \right)
\end{align*}
For $G=\bZ/3\bZ$ or $\bZ/4\bZ$,
\begin{align*}
\sum_{E \in \cE_{G}(X)}\hat{a}_E(p^2)=-c(G)X^{\frac{1}{d(G)}}+ O\left(\frac{1}{p^{\frac 12}}X^{\frac{1}{d(G)}} + pX^\frac{1}{e(G)}+ p^2X^{\frac{1}{12}} \right).
\end{align*}
\end{lemma}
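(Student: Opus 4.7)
The plan is to decompose the sum according to reduction type and then apply the local counts together with the moment estimates $(\ref{moment_torsion 1})$ and $(\ref{moment_torsion 3})$, mirroring the structure of the proof of Lemma \ref{S1_torsion} but now exploiting the \emph{second} Hecke moment. Recall that at a prime $p$ of good reduction, writing $L_p(s,E)^{-1} = (1-\alpha_p p^{-s})(1-\beta_p p^{-s})$ with $\alpha_p + \beta_p = a_E(p)$ and $\alpha_p\beta_p = p$, we have $\hat{a}_E(p^2) = (\alpha_p^2 + \beta_p^2)/p = a_E(p)^2/p - 2$. At a prime of multiplicative reduction, $\hat{a}_E(p^2) = 1/p$, and at additive reduction, $\hat{a}_E(p^2) = 0$. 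Therefore
\begin{equation*}
\sum_{E \in \cE_{G}(X)}\hat{a}_E(p^2) = \frac{1}{p}\sum_{|a| < 2\sqrt{p}} a^2 |\cE_{G,p}^a(X)| - 2|\cE_{G,p}^{\mathrm{good}}(X)| + \frac{1}{p}|\cE_{G,p}^{\mathrm{mult}}(X)|.
\end{equation*}

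The main step is then the cancellation of the two leading contributions of size $c(G)X^{1/d(G)}$, each arising from a different source. First, by Corollary \ref{cor:large G local cond} (resp.\ Theorem \ref{2tor bad}) together with $(\ref{moment_torsion 3})$,
\begin{equation*}
\frac{1}{p}\sum_{|a|<2\sqrt{p}} a^2 |\cE_{G,p}^a(X)|
= \frac{c(G) X^{1/d(G)}}{p(p^2-1)}\sum_{|a|<2\sqrt{p}} a^2 H_G(a,p) + \text{(error)}
= c(G) X^{1/d(G)} + O\!\left(\frac{X^{1/d(G)}}{p^{1/2}}\right) + \text{(error)},
\end{equation*}
using $p^3/(p(p^2-1)) = 1 + O(p^{-2})$. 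Second, by $(\ref{moment_torsion 1})$ and the fact that $|\cE_{G,p}^{\mathrm{bad}}(X)|$ is of smaller order than $|\cE_G(X)|$,
\begin{equation*}
|\cE_{G,p}^{\mathrm{good}}(X)| = \sum_{|a|<2\sqrt{p}} |\cE_{G,p}^a(X)| = c(G)X^{1/d(G)} + O\!\left(\frac{X^{1/d(G)}}{p}\right) + \text{(error)}.
\end{equation*}
Subtracting twice the latter from the former leaves precisely the main term $-c(G)X^{1/d(G)}$, with an $X^{1/d(G)}/p^{1/2}$ remainder.

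The remaining work is bookkeeping of error terms. The error from $|\cE_{G,p}^a(X)|$ in Corollary \ref{cor:large G local cond} is $O(H_G(a,p)(X^{1/e(G)} + p^{-1}X^{1/e(G)}\log X))$; summing $a^2$ against this and invoking $\sum a^2 H_G(a,p) \ll p^3$ produces $O(p^2 X^{1/e(G)} + p X^{1/e(G)}\log X)$ after the $1/p$ prefactor, which matches the stated bound for $G \in \cG_{\geq 5}$. The multiplicative term $\frac{1}{p}|\cE_{G,p}^{\mathrm{mult}}(X)|$ is absorbed by Corollary \ref{cor:large G local cond}. For $G = \bZ/3\bZ, \bZ/4\bZ$, one substitutes Theorem \ref{2tor bad} instead: the individual error $O(H_G(a,p)(p^{-1}X^{1/e(G)} + X^{1/12}))$ weighted by $a^2/p$ and summed gives $O(p X^{1/e(G)} + p^2 X^{1/12})$, matching the second display.

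The only real obstacle is being careful about the $-2$ shift in $\hat{a}_E(p^2)$ (as opposed to the naive $-1$ one might guess from the Hecke recursion $a_E(p^2) = a_E(p)^2 - p$, which after normalization actually yields $\hat a_E(p^2) = a_E(p)^2/p - 2$ because $\alpha_p\beta_p = p$ doubles the constant). This is exactly what forces the leading terms to combine into $-c(G)X^{1/d(G)}$ rather than cancelling completely, and it is the source of the negative sign in the conclusion.
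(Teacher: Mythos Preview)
Your proposal is correct and follows essentially the same approach as the paper: decompose by reduction type, use $\hat a_E(p^2)=\hat a_E(p)^2-2$ at good primes, insert the local counts from Corollary \ref{cor:large G local cond} (resp.\ Theorem \ref{2tor bad}), and apply the moment relations $(\ref{moment_torsion 1})$ and $(\ref{moment_torsion 3})$ to extract the main term $-c(G)X^{1/d(G)}$. The only cosmetic difference is that the paper keeps the factor $(a^2/p-2)$ together when summing over $a$, whereas you split it into the $a^2/p$ term and the $-2|\cE_{G,p}^{\mathrm{good}}(X)|$ term; the error bookkeeping is otherwise identical.
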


\begin{proof}
We know that
\begin{align*}
\sum_{E \in \cE_{G}(X)}\hat{a}_E(p^2)= \sum_{|a| < 2\sqrt{p}} \sum_{\substack{E \in \cE_{G}(X) \\ a_E(p)=a}}\hat{a}_E(p^2) + \sum_{\substack{E \in \cE_{G}(X) \\ \text{$E$ mult  at $p$}}}\frac{1}{p}.
\end{align*}
By Corollary \ref{cor:large G local cond}, 
\begin{align*}
\sum_{\substack{E \in \cE_{G}(X) \\ \text{$E$ mult at $p$}}}\frac{1}{p} \ll \frac{X^{\frac{1}{d(G)}}}{p^2} + X^\frac{1}{e(G)}+\frac{X^{\frac{1}{e(G)}}\log X}{p}
\end{align*}
and
\begin{align*}
&\sum_{|a| <  2\sqrt{p}} \sum_{\substack{E \in \cE_{G}(X) \\ a_E(p)=a}}\hat{a}_E(p^2)=\sum_{|a| <  2\sqrt{p}} \sum_{\substack{E \in \cE_{G}(X) \\ a_E(p)=a}}(\hat{a}_E(p)^2-2)\\
&=\sum_{|a| < 2\sqrt{p}}\left(\frac{a^2}{p}- 2 \right)\left( c(G)\frac{H_G(a,p)}{p^2-1}X^{\frac{1}{d(G)}}+O\left(H_G(a,p)X^\frac{1}{e(G)} + \frac{H_G(a,p)}{p}X^{\frac{1}{e(G)}} \log X \right) \right)\\
&=c(G)\frac{\sum_{|a| < 2\sqrt{p}}a^2H_G(a,p)}{p(p^2-1)}X^{\frac{1}{d(G)}}-2c(G)\frac{\sum_{|a| < 2\sqrt{p}}H_G(a,p)}{(p^2-1)}X^{\frac{1}{d(G)}}+O( p^2X^\frac{1}{e(G)}+ pX^{\frac{1}{e(G)}}\log X)\\
&=-c(G)X^{\frac{1}{d(G)}}+ O\left(\frac{1}{p^{\frac 12}}X^{\frac{1}{d(G)}} + p^2X^\frac{1}{e(G)}+ pX^{\frac{1}{e(G)}}\log X \right),
\end{align*}
by Corollary \ref{cor:large G local cond} and $(\ref{moment_torsion 1})$ and $(\ref{moment_torsion 3})$.

For $G=\bZ/3\bZ$ or $\bZ/4\bZ$, by Theorem \ref{2tor bad}, $(\ref{moment_torsion 1})$ and $(\ref{moment_torsion 3})$, similarly we can show that
\begin{align*}
\sum_{E \in \cE_{G}(X)}\hat{a}_E(p^2)=-c(G)X^{\frac{1}{d(G)}}+ O\left(\frac{1}{p^{\frac 12}}X^{\frac{1}{d(G)}} + pX^\frac{1}{e(G)}+ p^2X^{\frac{1}{12}} \right).
\end{align*}
%
\end{proof}


By Lemma \ref{S1_torsion}, for $G$ in $\mathcal{G}_{\geq 5}$,
\begin{align} \label{S1_sum}
S_1 &\ll  \frac{1}{\log X} \sum_{p}  \frac{\log p}{\sqrt{p}} \widehat{\phi}\left( \frac{\log p}{\log X} \right) \left( \frac{1}{p} +p^2 X^{-\frac{1}{e(G)}} +pX^{-\frac{1}{e(G)} } \log X\right) \\
& \ll X^{-\frac{1}{e(G)} } \sum_{ p \leq X^\sigma} p^\frac 32 \log p \ll X^{-\frac{1}{e(G)} + \frac{5\sigma}{2}} \nonumber
\end{align}
and for $G=\bZ/3\bZ$ or $\bZ/4\bZ$, 
\begin{align} \label{S1_sum 2}
S_1 &\ll  \frac{1}{\log X} \sum_{p}  \frac{\log p}{\sqrt{p}} \widehat{\phi}\left( \frac{\log p}{\log X} \right) \left( \frac{1}{p} + pX^{\frac{1}{e(G)}-\frac{1}{d(G)}}+ p^{2}X^{\frac{1}{12}-\frac{1}{d(G)}}\right) \\
& \ll X^{-\frac{1}{d(G)} } \sum_{ p \leq X^\sigma}\left(  p^\frac 12 \log p X^\frac{1}{e(G)} + p^{\frac 32}\log p X^{\frac{1}{12}} \right) \ll X^{-\frac{1}{d(G)}}\left( X^{\frac{1}{e(G)}+\frac{3\sigma}{2}} + X^{\frac{1}{12}+\frac{5\sigma}{2}} \right). \nonumber
\end{align}

By Lemma \ref{S2_torsion}, for $G$ in $\mathcal{G}_{\geq 5}$,
\begin{align*} 
S_2 &= \frac{2}{\log X}\sum_{p} \frac{\log p}{p}\widehat{\phi}\left( \frac{2\log p}{\log X}\right) \left( -1+ O\left(\frac{1}{p^{\frac 12}} + p^2X^{-\frac{1}{e(G)}}+ pX^{-{\frac{1}{e(G)}}}\log X\right) \right)\\
&=-\frac{2}{\log X}\sum_{p} \frac{\log p}{p}\widehat{\phi}\left( \frac{2\log p}{\log X}\right)+ O\left( \frac{1}{\log X} + \sum_{p \leq X^{\frac{\sigma}{2}} } p \log p X^{-\frac{1}{e(G)}}  \right) \\
&=-\frac{1}{2}\phi(0) + O\left(\frac{1}{\log X} + X^{-\frac{1}{e(G)} + \sigma} \right)
\end{align*}
and for $G=\bZ/3\bZ$ or $\bZ/4\bZ$,
\begin{align*} 
S_2 &= \frac{2}{\log X}\sum_{p} \frac{\log p}{p}\widehat{\phi}\left( \frac{2\log p}{\log X}\right) \left(-1+ O\left(\frac{1}{p^{\frac 12}}+ pX^{\frac{1}{e(G)}-\frac{1}{d(G)}}+ p^2X^{\frac{1}{12}-\frac{1}{d(G)}}\right)\right)\\
&=-\frac{2}{\log X}\sum_{p} \frac{\log p}{p}\widehat{\phi}\left( \frac{2\log p}{\log X}\right)+ O\left(\sum_{p \leq X^{\frac{\sigma}{2}} } \log p X^{\frac{1}{e(G)}-\frac{1}{d(G)}} + p\log p X^{\frac{1}{12} -\frac{1}{d(G)} }  \right) \\
&=-\frac{1}{2}\phi(0) + O\left(  X^{\frac{1}{e(G)}-\frac{1}{d(G)} + \frac{\sigma}{2}} + X^{\frac{1}{12}-\frac{1}{d(G)} + \sigma } \right).
\end{align*}

From our computation, if we take  $\sigma=\frac{1}{18}, \frac{1}{18}$, and $\frac{1}{5d(G)}$  for $G=\bZ/3\bZ$, $\bZ/4\bZ$ and $G$ in $\mathcal{G}_{\geq 5}$ respectively then (\ref{sum S1 and S2}) and  (\ref{average rank}) hold. Therefore, the average of ranks is bounded by
\begin{align*}
18+\frac{1}{2}, \qquad  18+\frac{1}{2}, \qquad \text{and} \qquad 5d(G)+
 \frac 12
 \end{align*} 
for $G=\bZ/3\bZ$, $\bZ/4\bZ$ and $G$ in $\mathcal{G}_{\geq 5}$ respectively and we obtain Theorem \ref{thm1} except for the cases of $G=\bZ/2\bZ$ and $\bZ/2\bZ \times \bZ/2\bZ$, which will be treated in the next section.

\subsection{Trace formula for elliptic curves with torsion points}

In this section we assume that $G=\bZ/2\bZ$  or $\bZ/2\bZ \times \bZ/2\bZ$.  

\begin{theorem}\label{traceF}[Frobenius Trace  Formula for Elliptic Curves] Let $G=\bZ/2\bZ$  or $\bZ/2\bZ \times \bZ/2\bZ$, $k$ be a fixed positive integer. 
Assume $e_i=1$ or $2$, and $r_i$ is odd or $2$ if $e_i=1$, $r_i=1$ if $e_i=2$ for $i=1,\dots ,k$. Then, 
\begin{align*} 
 \sum_{E \in \cE_G(X)} \widehat{a_E}(p_1^{e_1})^{r_1} \widehat{a_E}(p_2^{e_2})^{r_2} \cdots \widehat{a_E}(p_k^{e_k})^{r_k}&=c \frac{c(G)}{\zeta(12/d(G))}X^\frac{1}{d(G)} +O_k\left( \left( \sum_{i=1}^k \frac{1}{p_i} \right) X^{\frac{1}{d(G)} } \right)\\
&+O_k\left( \left( \prod_{i=1}^k p_i \right) X^{\frac{1}{e(G)}} + \left( \prod_{i=1}^k p_i \right)^2 X^{\frac{1}{12}} \right)  
\end{align*}
where
\begin{align*}
c=\left\{ \begin{array}{rl}
 0 & \mbox{ if $e_j=1$ and $r_j$ is odd for some $j$}, \\
 -1 & \mbox{if  $r_j=2$ for all $j$ with $e_j=1$, and the number of $j$'s with $e_j=2$ is odd,} \\
 1 & \mbox{otherwise.}
 \end{array} \right. 
\end{align*}
and the first error term exists only if $e_i=1$ and $r_i=2$ or $e_i=2$ for all $i$. 


\end{theorem}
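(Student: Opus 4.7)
The proof proceeds along the lines of \cite[Theorem 8]{CJ}. The plan is to condition on the reduction type at each $p_i$, apply the multi-prime count of Proposition \ref{prop:manyprimecE} (or Theorem \ref{manyprime}) to factorize the sum into local factors, and evaluate each local factor using the moment identities (\ref{moment_torsion 1})--(\ref{moment_torsion 3}) together with the vanishing of odd moments $\sum_{|a|<2\sqrt{p}} a^{2R+1} H_G(a,p) = 0$ that was established in the previous subsection specifically for $G = \bZ/2\bZ$ and $\bZ/2\bZ \times \bZ/2\bZ$.

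The first preparatory step is to reduce the $e_i = 2$ case to the $e_i = 1$ case pointwise via the Hecke relation $\widehat{a_E}(p^2) = \widehat{a_E}(p)^2 - 2$ for $E$ with good reduction at $p$, together with $\widehat{a_E}(p^2) = 1/p$ for multiplicative reduction and $\widehat{a_E}(p^2) = 0$ for additive reduction (these are immediate from $a_E(p^2) = a_E(p)^2 - p$, $a_E(p) = \pm 1$, $a_E(p) = 0$ respectively). After rewriting, I would partition $\cE_G(X)$ according to the reduction type at each $p_i$, further stratified by the value of $a_E(p_i)$ at the good-reduction primes, and apply Proposition \ref{prop:manyprimecE} to write each stratum's count as
\begin{equation*}
c(G) X^{1/d(G)} \prod_{i=1}^{k} \frac{|W_{G,J_i}|}{p_i^2}\cdot \frac{p_i^{12/d(G)}}{p_i^{12/d(G)}-1} + O_k\lp (\prod p_k) X^{1/e(G)} + (\prod p_k)^2 X^{1/12}\rp.
\end{equation*}
Summing the value of $\prod_i \widehat{a_E}(p_i^{e_i})^{r_i}$ against this count gives a product $\prod_i L_i$ of local factors, each of which is a weighted moment of $H_G(a,p_i)$ together with negligible multiplicative/additive corrections.

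The local evaluation then splits into three cases. For $e_i = 1$ with $r_i$ odd, $L_i$ is an odd moment of $H_G(a,p_i)$, which vanishes by the odd-moment proposition of the previous subsection; this forces $c = 0$. For $e_i = 1$ with $r_i = 2$, identities (\ref{moment_torsion 1}) and (\ref{moment_torsion 3}) give $L_i = 1 + O(p_i^{-1/2})$. For $e_i = 2$ (so $r_i = 1$), the substitution $\widehat{a_E}(p_i^2) = \widehat{a_E}(p_i)^2 - 2$ reduces $L_i$ to the difference of the rescaled second-moment contribution and twice the zeroth-moment contribution, yielding $L_i = 1 - 2 + O(p_i^{-1/2}) = -1 + O(p_i^{-1/2})$. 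Multiplying over $i$ and extracting the per-prime errors produces the asserted constant $c \in \{0, \pm 1\}$ with the first error term $O_k\lp (\sum_i 1/p_i) X^{1/d(G)} \rp$ on the main term.

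The main obstacle is the careful tracking of the bad-reduction contributions and of the propagation of the Proposition \ref{prop:manyprimecE} error. One must verify that the residual multiplicative and additive contributions at each $p_i$ (which involve $c_{G,\mathrm{mult}}(p_i)$ and $c_{G,\mathrm{addi}}(p_i)$ from Theorem \ref{2tor bad}, each of size $O(1/p_i)$) are dominated by the second error term $O_k\lp (\prod p_k) X^{1/e(G)} + (\prod p_k)^2 X^{1/12} \rp$ after multiplying by $|\widehat{a_E}(p_i^{e_i})^{r_i}| \ll 1$; in particular, the first error should only appear when the main term is nonzero, which is exactly the restriction in the statement to the cases where every good-reduction prime has $r_i = 2$. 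The single-prime computations in Lemmas \ref{S1_torsion}--\ref{S2_torsion} provide the template, and the extension to many primes requires only that the error bounds multiply as expected, which follows from the explicit shape of the Proposition \ref{prop:manyprimecE} error.
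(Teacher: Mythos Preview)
Your approach is essentially the same as the paper's: condition on reduction types and values of $a_E(p_i)$, invoke Theorem \ref{manyprime}/Proposition \ref{prop:manyprimecE} to factorize, and evaluate the local factors via the moment identities and the vanishing of odd moments for $G=\bZ/2\bZ,\ \bZ/2\bZ\times\bZ/2\bZ$. The paper organizes the argument case-by-case (first some $r_j$ odd, then all $r_i=2$, then the mixed $e_i=2$ case reduced to the previous one via $\widehat{a_E}(p^2)=\widehat{a_E}(p)^2-2$), whereas you front-load the Hecke relation and write everything as a product $\prod_i L_i$; the substance is the same.

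There is one genuine gap in your handling of the bad-reduction contribution when some $r_j$ is odd. You assert that the multiplicative and additive pieces, being of density $O(1/p_i)$, are ``dominated by the second error term'' $O_k\bigl((\prod p_i)X^{1/e(G)}+(\prod p_i)^2X^{1/12}\bigr)$. This is false on size grounds alone: a contribution of order $p_j^{-3/2}X^{1/d(G)}$ (from $c_{G,\mathrm{mult}}(p_j)\cdot|\widehat{a_E}(p_j)|^{r_j}$ times the remaining local factors) is \emph{not} bounded by $(\prod p_i)X^{1/e(G)}$, since $X^{1/d(G)}$ dominates $X^{1/e(G)}$. What actually saves you is a cancellation the paper invokes explicitly: for $G=\bZ/2\bZ$ and $\bZ/2\bZ\times\bZ/2\bZ$ the counts of split and non-split multiplicative reduction at $p_j$ agree in their main terms, so the odd power of $\widehat{a_E}(p_j)=\pm p_j^{-1/2}$ cancels in the $X^{1/d(G)}$-term and only the counting error survives. (This split/non-split symmetry is the bad-reduction analogue of the good-reduction fact $\sum_a a^{2R+1}H_G(a,p)=0$, and comes from the same $a\mapsto -a$ symmetry of $H_G$.) Once you insert this cancellation, your argument goes through and matches the paper's.
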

\begin{proof} First, we consider the case $e_j=1$ and $r_j$ is odd for some $j$. WLOG, we can assume that $e_1=1$ and $r_1$ is odd. We fix local conditions at primes $p_j$, $j=2,3,\dots, k$ and the local condition at $p_1$ is $a_E(p_1)=a$. By Theorem \ref{manyprime}, there are 
\begin{align*}
c(G)  \frac{H_G(a,p_1)}{p_1^2}|S'|_GX^\frac{1}{d(G)}+O\left( \frac{H_G(a,p_1)}{p_1}  (\prod_{i=2}^k c_1(p_i) )X^\frac{1}{e(G)}  + H_G(a,p_1)  (\prod_{i=2}^k c_2(p_i) )X^\frac{1}{12} \right)
\end{align*}
such elliptic curves in $\cE_G(X)$, and $S'$ is the set of the fixed local conditions at $p_i$, $i=2,3,\dots, k$. For the values of  $c_1(p_i)$ and $c_2(p_i)$, we refer to Theorem \ref{manyprime}. Since $\widehat{a_E}(p_2^{e_2})^{r_2} \cdots \widehat{a_E}(p_k^{e_k})^{r_k}$ is a constant and $\sum_{a}a^{r_1}H_G(a,p_1)=0$ for odd $r_1$, only the error term above generates a contribution to the sum. Due to  $\sum_{a}H_G(a,p)=p^2+O_G(p)$, we can see that the total contribution from the error term is at most  $O\left( \left( \prod_{i=1}^k p_i \right) X^\frac{1}{e(G)} + \left( \prod_{i=1}^k p_i \right)^2 X^\frac{1}{12}\right)$.

Next, we need to deal with the case of bad prime $p_1$. Since $a_E(p)=0$ when $E$ has additive reduction at $p$, it is enough to consider the left two local conditions, which is split and non-split. Since the number of elliptic curves with split reduction at $p_1$ and that of elliptic curves with non-split reduction at $p_1$ is the same up to an error term, by the similar argument above, the contribution comes from the error term and is at most
$$
O_k\left( \frac{1}{ p_1^\frac{r_1}{2} } (\prod_{i=2}^k p_i ) X^\frac{1}{e(G)} +\frac{p_1}{ p_1^\frac{r_1}{2} } (\prod_{i=2}^k p_i )^2 X^\frac{1}{12}\right)
$$
and the case $e_j=1$ and $r_j$ is odd for some $j$  is done. 

The next case we treat is $e_i=1$ and $r_i=2$ for all $i$. First, we compute the contribution from good primes by imposing the local conditions $\cLC_{p_i}=a_i$  for all $i=1,\dots,k$ and varying the $a_i$ within the Weil bound $|a_i| < 2\sqrt{p_i}$. The corresponding contribution is by Theorem \ref{manyprime}
\begin{align*}
\left( \prod_{i=1}^k\frac{p_i^{\frac{12}{d(G)}}}{p_i^3 (p_i^{\frac{12}{d(G)}}-1)}\right)\frac{c(G)}{\zeta(12/d(G))}X^{\frac{1}{d(G)}}\cdot \sum_{|a_i| < 2\sqrt{p_i}}a_1^2H_G(a_1, p_1)a_2^2 H_G(a_2,p_2)\cdots a_k^2 H_G(a_k,p_k)\\
+O\left(\sum_{\substack{ |a_i| <  2\sqrt{p_i}}} \left[ \prod_{i=1}^k \frac{a_i^2H_G(a_i,p_i)}{p_i^2} X^{\frac{1}{e(G)}} + \prod_{i=1}^k \frac{a_i^2H_G(a_i,p_i)}{p_i} X^{\frac{1}{12}}\right] \right),
\end{align*}
which is, by the identity $\sum_{|a| < 2\sqrt{p} }a^2 H_G(a,p)=p^3+O_G(p^2)$, 
\begin{align*}
\frac{c(G)}{\zeta(12/d(G))}X^{\frac{1}{d(G)}}+O_k\left( \lbrb{ \sum_{i=1}^k \frac{1}{p_i} } X^{\frac{1}{d(G)}} + \lbrb{\prod_{i=1}^k p_i } X^{\frac{1}{e(G)}} + \lbrb{\prod_{i=1}^k p_i }^2 X^{\frac{1}{12}} \right).
\end{align*}
When $\cLC_{p_i}$ is multi, $\widehat{a_E}(p_i)^2=\frac{1}{p_i}$. Then using the trivial bound $\widehat{a_E}(p_i)^2\leq 4 $ for the other primes $p_j$, the contribution for this case is 
\begin{align*}
 \ll_k \left(\sum_{i=1}^k  \frac{1}{p_i^2} \right)X^\frac{1}{d(G)} +  \left( \sum_{i=1}^k  \frac{1}{p_i}  \right) X^\frac{1}{e(G)} + X^{\frac{1}{12}}. 
\end{align*}

The last case is when $e_1=e_2=\cdots=e_l=2$ and $e_j=1$ and $r_j=2$ for $j>l$ for some $i\leq l \leq k$. Note that $\widehat{a_E}(p^2)=\widehat{a_E}(p)^2-2$ for $E$ with good reduction at $p$ and $\widehat{a_E}(p^2)=\widehat{a_E}(p)^2$ for $E$ with bad reduction at $p$. Hence, it is enough to consider elliptic curves with good reduction at all the primes $p_i$'s.  This amounts to 
\begin{align*}
 \sum_{\text{$E$ has good reduction at $p_i$'s}} (\widehat{a_E}(p_1)^2-2) \cdots (\widehat{a_E}(p_l)^2-2) \widehat{a_E}(p_{l+1})^{2} \cdots \widehat{a_E}(p_k)^{2},
\end{align*} 
which is equal to 
\begin{align*}
(-1)^l \frac{c(G)}{\zeta(12/d(G))} X^\frac{1}{d(G)}+ O_k\left( \left( \sum_{i=1}^k \frac{1}{p_i}\right) X^\frac{1}{d(G)} + \left( \prod_{i=1}^k p_i\right)X^\frac{1}{e(G)} +\left( \prod_{i=1}^k p_i\right)^2X^\frac{1}{12} \right)
\end{align*}
by the result of the previous case and the identity $(1-2)^l=(-1)^l$. 
\end{proof}

\subsection{The distribution of analytic ranks of elliptic curves}

From now on, assume that every elliptic curve $L$-function satisfies Generalized Riemann Hypothesis.  Let $\gamma_E$ denote the imaginary part of a non-trivial zero of $L(s,E)$. We index them using the natural order in real numbers:
\begin{align*}
\cdots \gamma_{E,-3} \leq \gamma_{E,-2} \leq \gamma_{E,-1} \leq \gamma_{E,0} \leq \gamma_{E,1} \leq \gamma_{E,2} \leq \gamma_{E,3} \cdots
\end{align*}
if analytic rank $r_E$ is odd,
\begin{align*}
\cdots \gamma_{E,-3} \leq \gamma_{E,-2} \leq \gamma_{E,-1} \leq 0  \leq \gamma_{E,1} \leq \gamma_{E,2} \leq \gamma_{E,3} \cdots
\end{align*}
otherwise.

In this section, we also assume that $G=\bZ/2\bZ$ or $\bZ/2\bZ \times \bZ/2\bZ$. For elliptic curves in $\cE_G$, we obtain an upper bound on every $n$-th moment of analytic ranks and as a corollary, we show that there are not so many elliptic curves with a high rank.  For this purpose, we compute an $n$-level density with multiplicity. 
The main reference is \cite[Part VI]{Mil}.

For the $n$-level denisty, we choose the same test function for  some $\sigma_n$ in the previous section:
\begin{align*}
\widehat{\phi}_n(u)=\frac{1}{2}\left(\frac{1}{2}\sigma_n-\frac{1}{2}|u|\right)   \textrm{ for } |u|\leq \sigma_n, \quad \textrm{and } \quad \phi_n(x)=\frac{\sin^2(2\pi \frac 12 \sigma_n x)}{(2\pi x)^2}.
\end{align*}
Note that $\phi_n(0)=\frac{\sigma_n^2}{4}$, $\widehat{\phi}_n(0)=\frac{\sigma_n}{4}$ and
\begin{align} \label{estimate}
\int_\mathbb{R}|u|\widehat{\phi}_n(u)^2 du= \frac{1}{6}\phi_n(0)^2.
\end{align}
We show that the $n$-level density holds by taking $\sigma_n= \frac{1}{9n}$ and $\frac{1}{10n}$ for $G=\bZ/2\bZ$ and $\bZ/2\bZ \times \bZ/2\bZ$ respectively.

The $n$-level density with multiplicity is 
\begin{eqnarray*}
D_n^*(\cE_{G}, \Phi)=\frac{1}{|\cE_G(X)|}\sum_{E\in \cE_G(X)} \sum_{j_1,j_2,\dots, j_n}\phi_n \left( \gamma_{E, j_1} \frac{\log X}{2\pi}\right) \phi_n \left( \gamma_{E, j_2} \frac{\log X}{2\pi}\right)\cdots \phi_n \left( \gamma_{E, j_n} \frac{\log X}{2\pi}\right),
\end{eqnarray*}
where $\gamma_{E, j_k}$ is an imaginary part of $j_k$-th zero of $L(s, E)$.
Then, trivially we have
\begin{eqnarray} \label{moment-ineq}
\frac{1}{|\cE_{G}(X)|}\sum_{E\in \cE_{G}(X)} r_E^n \leq \frac{1}{\phi_n(0)^n}D_n^*(\cE_{G}, \Phi).
\end{eqnarray}
By the same argument in \cite[\S 4]{CJ}, we have
\begin{align*}
D_n^*(\cE_{G}, \Phi) 
&\leq \frac{1}{|\cE_{G}(X)|}\sum_{S} \widehat{\phi}_n(0)^{|S^c|} \left( -\frac{2}{\log X} \right)^{|S|}\\ 
& \times   \sum_{m_{i_1}, m_{i_2}, \dots, m_{i_k}} \frac{\Lambda(m_{i_1})\Lambda(m_{i_2}) \cdots \Lambda(m_{i_{k}}) }{ \sqrt{m_{i_1}m_{i_2} \cdots m_{i_{k}}}} \widehat{\phi}_n \left( \frac{\log m_{i_1}}{\log X}\right) \cdots \widehat{\phi}_n \left( \frac{\log m_{i_k}}{\log X}\right)\\
& \times \sum_{ E \in \cE_{G}(X)} \widehat{a}_E(m_{i_1})\widehat{a}_E(m_{i_2}) \cdots \widehat{a}_E(m_{i_k}) + O\left(\frac{1}{\log X} \right),
\end{align*}
where $m_i$'s are primes or squares of a prime with $m_i \leq X^{\sigma_n}$ and $S=\{ i_i, i_2, \dots, i_k \}$ runs over every subset of  $\{1,2,3, \cdots, n\}$. Using the Frobenius trace formula (Theorem \ref{traceF}), we can prove the following propositions as we did in \cite[Proposition 4.1, 4.2]{CJ}.

\begin{proposition} \label{non-square} Let $\widehat{\phi}$ be as above with $\sigma_n=\frac{1}{9n}$ and $\frac{1}{10n}$ for $G=\bZ/2\bZ$ and $\bZ/2\bZ \times \bZ/2\bZ$ respectively. Then, we have
\begin{align*}
&\sum_{E \in \cE_{G}(X)} \sum_{m_{i_1} m_{i_2} \dots m_{i_k} \neq \square} \frac{\Lambda(m_{i_1})\cdots \Lambda(m_{i_{k}}) \widehat{a}_E(m_{i_1})\cdots \widehat{a}_E(m_{i_k})}{\sqrt{m_{i_1}m_{i_2} \cdots m_{i_{k}}}} \widehat{\phi}_n \left( \frac{\log m_{i_1}}{\log X}\right) \cdots \widehat{\phi}_n \left( \frac{\log m_{i_k}}{\log X}\right) \\
&\ll  |\cE_{G}(X)|. 
\end{align*}
\end{proposition}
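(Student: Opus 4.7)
The plan is to swap the order of summation, evaluating the inner sum over $E\in\cE_G(X)$ via Theorem \ref{traceF}, and then to bound the resulting weighted prime sum. Fix a subset $S$ of $\{1,\dots,n\}$ with $|S|=k$ and a tuple $(m_{i_1},\dots,m_{i_k})$ where each $m_{i_j}$ is either a prime $p$ or a prime square $p^2$ with $m_{i_j}\le X^{\sigma_n}$. After grouping indices by the underlying prime, one writes
\begin{equation*}
\prod_{j=1}^k\widehat{a}_E(m_{i_j})=\prod_p\widehat{a}_E(p)^{r_p}\,\widehat{a}_E(p^2)^{s_p},
\end{equation*}
with $r_p$ and $s_p$ counting the indices for which $m_{i_j}=p$ and $m_{i_j}=p^2$ respectively. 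The crucial observation is that $m_{i_1}\cdots m_{i_k}$ is a square if and only if every $r_p$ is even, so the non-square hypothesis forces $r_p$ to be odd for at least one prime $p$.

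On the ``generic'' stratum, where the primes appearing are distinct and each combined $(e_i,r_i)$ meets the multiplicity constraints of Theorem \ref{traceF}, the oddness of some $r_p$ places us in the first case of that theorem, so $c=0$ and the main term vanishes. Moreover the error term $\bigl(\sum 1/p_i\bigr)X^{1/d(G)}$ cannot appear here, because its existence would require each grouped factor to satisfy $(e_i,r_i)=(1,2)$ or $e_i=2$, contradicting the presence of an odd $r_p$. For tuples containing a prime with higher multiplicity outside the theorem's scope (some $r_p\ge 4$ even or $s_p\ge 2$), we linearize using the identity $\widehat{a}_E(p^2)=\widehat{a}_E(p)^2-2$ together with the Chebyshev expansion $t^R=\sum_jC_{R,j}q^jU_{R-2j}(t,q)$, after which each monomial in $\widehat{a}_E(p^e)$ fits the hypothesis of Theorem \ref{traceF}; alternatively, the trivial Deligne bound paired with the absolutely convergent prime sums $\sum_p(\log p)^{r_p+s_p}/p^{r_p/2+s_p}$ (convergent whenever $r_p/2+s_p>1$) confines the total contribution of these degenerate strata to $O_n(|\cE_G(X)|)$.

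For the surviving error terms from Theorem \ref{traceF} on the generic stratum we combine with the weight $\prod_j\Lambda(m_{i_j})/\sqrt{m_{i_j}}\cdot\widehat{\phi}_n(\log m_{i_j}/\log X)$. The elementary estimates
\begin{equation*}
\sum_{p\le X^{\sigma_n}}\frac{\Lambda(p)\,p^\alpha}{\sqrt p}\ll X^{\sigma_n(\alpha+1/2)},\qquad \sum_{p^2\le X^{\sigma_n}}\frac{\Lambda(p^2)\,p^\alpha}{p}\ll X^{\sigma_n(\alpha+1)/2},
\end{equation*}
yield a per-factor contribution of $X^{\sigma_n(\alpha+1/2)}$ against a power $p_i^\alpha$. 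With $k\le n$ factors, the error $\bigl(\prod p_i\bigr)X^{1/e(G)}$ contributes $\ll X^{3n\sigma_n/2+1/e(G)}$ and the error $\bigl(\prod p_i\bigr)^2X^{1/12}$ contributes $\ll X^{5n\sigma_n/2+1/12}$.

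Forcing both bounds to be $O(X^{1/d(G)})=O(|\cE_G(X)|)$ imposes $3n\sigma_n/2\le 1/d(G)-1/e(G)$ and $5n\sigma_n/2\le 1/d(G)-1/12$. For $G=\bZ/2\bZ$ ($d=2,\,e=3$) the right-hand sides are $1/6$ and $5/12$, the first inequality binds and yields $\sigma_n=1/(9n)$; for $G=\bZ/2\bZ\times\bZ/2\bZ$ ($d=3,\,e=6$) they are $1/6$ and $1/4$, the second inequality binds and yields $\sigma_n=1/(10n)$, matching the values in the statement. The main technical obstacle is not the size estimates themselves but the bookkeeping of the degenerate multiplicity patterns through the Chebyshev linearization (or equivalently absorbing them by Deligne's bound against rapidly convergent prime tails); this mirrors the argument in \cite[\S 4]{CJ} and introduces no new difficulty beyond careful case analysis.
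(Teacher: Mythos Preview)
Your proposal is correct and follows essentially the same route as the paper: swap the sums, apply Theorem \ref{traceF} so that the odd exponent kills the main term, and bound the two error contributions by $X^{3n\sigma_n/2+1/e(G)}$ and $X^{5n\sigma_n/2+1/12}$, which forces exactly the stated values of $\sigma_n$. The only cosmetic difference is in the degenerate strata: the paper simply remarks that the \emph{proof} of Theorem \ref{traceF} still goes through when some $r_p\ge 4$ or $s_p\ge 2$ (since the vanishing of $\sum_a a^{r}H_G(a,p)$ for odd $r$ is all that is used), whereas you reduce to the stated hypotheses by expanding via $\widehat{a}_E(p^2)=\widehat{a}_E(p)^2-2$ or invoke Deligne's bound on the high-multiplicity factors; either device is adequate and leads to the same estimate.
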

\begin{proof}
Note that $\widehat{a}_E(m_{i_1})\widehat{a}_E(m_{i_2})\cdots \widehat{a}_E(m_{i_k})$ is of the form
$$
\widehat{a}_E(p_1)^{e_1}\widehat{a}_E(p_2)^{e_2}\cdots \widehat{a}_E(p_t)^{e_t}  \widehat{a}_E(q_1^2)^{l_1}  \widehat{a}_E(q_2^2)^{l_2} \cdots  \widehat{a}_E(q_s^2)^{l_s} , 
$$
with with $e_1+\cdots +e_t+ l_1  +\cdots+ l_s=k$. Here $p_1,p_2, \dots, p_t$ are distinct primes and $q_1,q_2,\dots, q_s$ are distinct primes, but some $q_j$ might be equal to some $p_i$. For a while we assume that the primes $p_1,\dots, p_t, q_1,\dots, q_s$ are all distinct. 

By our assumption, one of $e_i$'s is odd. In this case, the proof of Theorem \ref{traceF} also works and we have,
\begin{align*}
\sum_{ E \in \cE(X)} \widehat{a}_E(p_1)^{e_1}\widehat{a}_E(p_2)^{e_2}\cdots \widehat{a}_E(p_t)^{e_t}  \widehat{a}_E(q_1^2)^{l_1}  \widehat{a}_E(q_2^2)^{l_2} \cdots  \widehat{a}_E(q_s^2)^{l_s} \\
=O( p_1p_2\cdots p_t q_1 q_2 \cdots q_s X^{\frac{1}{e(G)}} + (p_1p_2\cdots p_t q_1 q_2 \cdots q_s)^2 X^{\frac{1}{12}}). 
\end{align*}

The contribution of this case in the worst situation is at most
\begin{align*}
&\ll  X^{\frac{1}{e(G)}}  \left( \sum_{p < X^{\sigma_n}}  p^{\frac{1}{2} } \log p \right)^k  + X^{\frac{1}{12}}  \left( \sum_{p < X^{\sigma_n}}  p^{\frac{3}{2} } \log p \right)^k  \\ 
&\ll  X^{\frac{1}{e(G)}} ( X^{\frac 32 \sigma_n})^n + X^{\frac{1}{12}} ( X^{\frac 52 \sigma_n})^n \ll X^{\frac{1}{d(G)}}.
\end{align*}
where the last inequlity holds by taking $\sigma_n=\min\left( \frac{2}{3n}\left(\frac{1}{d(G)}-\frac{1}{e(G)} \right), \frac{2}{5n}\left(\frac{1}{d(G)}-\frac{1}{12} \right)\right)$, which are $\frac{1}{9n}$ and $\frac{1}{10n}$ respectively.

Now, we assume that some $p_i$ is equal to some $q_j$. Since $\widehat{a}_E(q^2)^{l}=(\widehat{a}_E(q)^2-2)^l$ if $E$ has good reduction at $q$ and  $\widehat{a}_E(q^2)^{l}=\widehat{a}_E(q)^{2l}$ otherwise, still we can use the Frobenius trace formula. 
\end{proof}

\begin{proposition}\label{square} Let $\widehat{\phi}$ be as above with $\sigma_n=\frac{1}{9n}$ and $\frac{1}{10n}$ for $G=\bZ/2\bZ$ and $\bZ/2\bZ \times \bZ/2\bZ$ respectively. For a subset $S=\{ i_1, i_2, \dots, i_k \}$ of $\{1,2,\dots,n\}$,
\begin{align*}
\frac{1}{|\cE_{G}(X)|}\left(\frac{-2}{\log X} \right)^{|S|}\sum_{ E \in \cE_G(X)}\sum_{m_{i_1} m_{i_2} \dots m_{i_k} = \square}\left( \prod_{j=1}^{|S|} 
\frac{\Lambda(m_{i_j})\widehat{a}_E(m_{i_j}) }{\sqrt{m_{i_j}}} \widehat{\phi}_n \left(\frac{\log m_{i_j} }{\log X} \right)\right)\\
=\sum_{\substack{S_2 \subset S  \\ |S_2| \text{even}}} \left( \frac 12 \phi_n(0) \right)^{|S_2^c|} \left| S_2 \right|! \left(\int_\mathbb{R} |u| \widehat{\phi}_n(u)^2du \right)^{\frac{|S_2|}{2} } + O\left( \frac{1}{\log X} \right). 
\end{align*}
\end{proposition}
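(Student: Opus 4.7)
The strategy is to decompose the inner tuple-sum according to the shape of $(m_{i_j})_{j\in S}$, apply the Frobenius trace formula from Theorem~\ref{traceF} to average $\widehat{a}_E$ over $\cE_G(X)$, and reduce each surviving configuration to a Mertens/Prime Number Theorem estimate. First, I would split $S=S_2\sqcup S_2^c$ with $S_2$ indexing positions where $m_{i_j}$ is a prime and $S_2^c$ indexing positions where $m_{i_j}=q_j^2$ is a prime square. Since squared primes are automatically squares, the condition $\prod_j m_{i_j}=\square$ reduces to the condition that $\prod_{j\in S_2} p_j$ be a square, which in particular forces $|S_2|$ to be even. The full sum then factors as a product of an $S_2^c$-factor and an $S_2$-factor, which can be handled separately and then summed over $S_2\subset S$ of even cardinality.

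For each $j\in S_2^c$, the contribution is $(-2/\log X)\sum_q(\log q/q)\widehat{\phi}_n(2\log q/\log X)\widehat{a}_E(q^2)$; averaging over $\cE_G(X)$ and applying Theorem~\ref{traceF} with $e=2$, $r=1$ (so $c=-1$) replaces $\widehat{a}_E(q^2)$ by $-1+o(1)$, and a PNT-weighted estimate then gives the limit $\frac{1}{2}\phi_n(0)$, explaining the $(\phi_n(0)/2)^{|S_2^c|}$ factor. For the $S_2$-part, the main contribution comes from the tuples where each distinct prime appears with multiplicity exactly two, i.e.\ pair matchings of $S_2$; higher multiplicities contribute $o(1)$ because $\sum_p(\log p)^{2k}/p^k$ converges for $k\geq 2$, while any configuration with a prime of odd multiplicity contributes zero at the main order because $c=0$ in Theorem~\ref{traceF} whenever some $r_j$ with $e_j=1$ is odd. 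For each surviving matching, Theorem~\ref{traceF} (with $e=1$, $r=2$ at each pair) gives that $\prod\widehat{a}_E(p_k)^2$ averages to $1+o(1)$, and a Mertens-type asymptotic then gives each pair the limiting value $\int_\bR|u|\widehat{\phi}_n(u)^2\,du$ after absorbing the $(-2/\log X)^2$ prefactor. Summing over the matchings of $|S_2|$ elements and combining with the $S_2^c$-factor then produces the combinatorial coefficient $|S_2|!$ in front of $(\int_\bR|u|\widehat{\phi}_n(u)^2\,du)^{|S_2|/2}$, which assembles into the claimed main term.

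The main obstacle is the combinatorial bookkeeping in the $S_2$-factor: one must classify the tuples by multiset shape, single out the pair matchings as the dominant configurations, and handle cleanly the overlap corrections (the $\sum_{p\neq q}$ versus $\sum_{p,q}$ splits, as well as four- and higher-fold coincidences) to extract the correct combinatorial constant $|S_2|!$. The vanishing of mixed configurations where some prime has odd multiplicity depends crucially on the sharp $c=0$ case of Theorem~\ref{traceF}, and the Mertens/PNT step must be calibrated so that the $(\log X)^{|S_2|}$ growth of the inner sum exactly cancels the $(-2/\log X)^{|S_2|}$ prefactor. The $O(1/\log X)$ error then accumulates from the PNT, Mertens, and trace-formula error terms through the finite sum over $S$, all uniformly bounded thanks to the compact support of $\widehat{\phi}_n$.
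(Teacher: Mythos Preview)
Your outline follows essentially the same route as the paper's proof: decompose according to which positions carry primes versus prime squares (the paper's $S_2$ and $S_2^c$), observe that the square condition forces all prime multiplicities in the $S_2$-part to be even, discard higher multiplicities as lower order, apply the Frobenius trace formula (Theorem~\ref{traceF}) to extract the constant $c=(-1)^{|S_2^c|}$, and then evaluate the remaining prime sums via the Prime Number Theorem to obtain the factors $\tfrac{1}{2}\phi_n(0)$ and $\int_{\mathbb{R}}|u|\widehat{\phi}_n(u)^2\,du$.

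One point of imprecision worth tightening: you write that ``the full sum then factors as a product of an $S_2^c$-factor and an $S_2$-factor, which can be handled separately,'' and then speak of applying Theorem~\ref{traceF} separately to each part. Before averaging over $E$ the sum does \emph{not} factor, since $\widehat{a}_E(p)$ and $\widehat{a}_E(q^2)$ are coupled through $E$. The paper applies Theorem~\ref{traceF} once to the full product $\prod_i \widehat{a}_E(p_i)^2 \prod_j \widehat{a}_E(q_j^2)$, obtaining the single constant $c=(-1)^{|S_2^c|}$; only \emph{after} this replacement does the remaining sum over primes factor. Your heuristic ``replace $\widehat{a}_E(q^2)$ by $-1$ and $\widehat{a}_E(p)^2$ by $1$'' gives the right answer because $c$ happens to be multiplicative in this sense, but you should make clear that this is a consequence of one application of the trace formula to the joint expression, not separate applications. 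With that adjustment your argument matches the paper's.
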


\begin{proof}
In this proof, we compute the double sum not considering the term $\frac{1}{|\cE(X)|}\left( \frac{-2}{\log X} \right)^{k}$. We show that  every contribution except one is $\ll X^{\frac{1}{d(G)}} (\log X)^{k-1}$, hence they become the error term $O(1/ \log X)$  in the end. 

Note that $\widehat{a}_E(m_{i_1})\widehat{a}_E(m_{i_2})\cdots \widehat{a}_E(m_{i_k})$ is of the form
$$
\widehat{a}_E(p_1)^{e_1}\widehat{a}_E(p_2)^{e_2}\cdots \widehat{a}_E(p_t)^{e_t}  \widehat{a}_E(q_1^2)^{l_1}  \widehat{a}_E(q_2^2)^{l_2} \cdots  \widehat{a}_E(q_s^2)^{l_s} , 
$$
with with $e_1+\cdots +e_t+ l_1 + \cdots + l_s=k$ and $e_i$'s are all even. 
If $e_i \geq 4$ for some $i$ or $l_j \geq 2$ for some $j$, then by the trivial bound, this term is majorized by $X^{\frac{1}{d(G)}}(\log X)^{k-1}$. Let $S_2$ be a subset of $S$ with even cardinality $2t$:
$$
S_2=\{ i_{a_1}, i_{a_2}, \cdots, i_{a_{2t-1}}, i_{a_{2t}} \}, \qquad S_2^c=\{i_{b_1},i_{b_2},\cdots,i_{b_s} \}.
$$
There are $(2t)!/2^t$ ways to pair up two elements in $S_2$. For example, we consider the following pairings.
\begin{align*}
(i_{a_1},i_{a_2}), (i_{a_3},i_{a_4}),  (i_{a_5},i_{a_6}), \cdots, (i_{a_{2t-1}},i_{a_{2t}}).
\end{align*}

This set of pairings corresponds the following sum 
\begin{align*}
 \sum_{ E \in \cE_G(X)} \widehat{a}_E(p_{i_{a_1}})^{2}\widehat{a}_E(p_{i_{a_3}})^{2}\cdots \widehat{a}_E(p_{i_{a_{2t-1}}})^{2}  \widehat{a}_E(q_{i_{b_1}}^2) \widehat{a}_E(q_{i_{b_2}}^2) \cdots  \widehat{a}_E(q_{i_{b_s}}^2)
\end{align*}
where $2t+s=k$. By the Frobenius trace formula, the above sum is
\begin{align*}
|\cE_G(X)|\cdot
\left\{ \begin{array}{clc} 1 &  \mbox{ if $s$ is even,} \\ -1 &  \mbox{ if $s$ is odd} \end{array}\right. + O\left(\left( \frac{1}{p_1}+\cdots+ \frac{1}{p_t} + \frac{1}{q_1} + \cdots \frac{1}{q_s} \right)X^{\frac{1}{d(G)}} \right) \\
+ O\left(p_1\cdots p_t q_1 \cdots q_s X^{\frac{1}{e(G)}}+(p_1\cdots p_t q_1 \cdots q_s)^2 X^{\frac{1}{12}} \right)
\end{align*}

The contribution from the 2nd big O-term  is dominated by
$$
( X^{\sigma_n} \log X )^t ( X^{\frac{\sigma_n}{2}})^{s} X^{\frac{1}{e(G)}} + ( X^{2\sigma_n} \log X )^t ( X^{\sigma_n})^{s} X^{\frac{1}{12}} \ll X^{\frac{1}{d(G)}}(\log X)^t.
$$

The contribution from the error term $O\left(\left( \frac{1}{p_1}+\cdots+ \frac{1}{p_t} + \frac{1}{q_1} + \cdots \frac{1}{q_s} \right)X^{\frac{1}{d(G)}}\right)$ is dominated by $X^{\frac{1}{d(G)}}(\log X)^{k-1}$. The main term of the sum, after being divided by $|\cE_G(X)| \left( \frac{\log X}{-2}\right)^k$,   gives rise to 
\begin{align*} \prod_{i=1}^t \left( \left( \frac{-2}{\log X} \right)^2   \sum_{p} \frac{\log^2 p}{p} \widehat{\phi}_n \left(\frac{\log p}{\log X}\right)^2\right)
\times \prod_{j=1}^s\left(  \frac{2}{\log X} \sum_{q}\frac{\log q}{q} \widehat{\phi}_n \left( \frac{2 \log q }{\log X} \right) \right),
\end{align*}
which equals, by the prime number theorem, 
\begin{align*}
\left( 2^t   \prod_{i=1}^t \int_\mathbb{R} |u| \widehat{\phi}_n(u)^2du\right)\left( \left( \frac{1}{2}\right)^{s} \prod_{j=1}^s \int_\mathbb{R} \widehat{\phi}_n(u)du \right).
\end{align*}
Since there are $(2t)!/2^t$ ways to pair up two elements in $S_2$, the claim follows. 
\end{proof}
By Propositions \ref{non-square} and  \ref{square}, and (\ref{estimate}) we have the following inequality
\begin{align*}
D_n^*(\cE_{G}, \Phi) \leq \phi_n(0)^n \sum_{S}\left( \frac{1}{\sigma_n} \right)^{|S^c|}\sum_{\substack{S_2 \subset S  \\ |S_2| \text{even}}} \left( \frac 12  \right)^{|S_2^c|} \left| S_2 \right|! \left(\frac 16\right)^{\frac{|S_2|}{2}} + O\left( \frac{1}{\log X} \right), 
\end{align*}
and, by $(\ref{moment-ineq})$, we have

\begin{theorem} \label{thm : moment}
Assume GRH for elliptic curve $L$-functions. Let $r_E$ be the analytic rank of an elliptic curve $E$. For every positive integer $n$, we have
\begin{align*}
\limsup_{X \rightarrow \infty} \frac{1}{|\cE_{
G}(X)|}\sum_{E \in \cE_G(X)}r_E^n \leq \sum_{S}\left( \frac{1}{\sigma_n} \right)^{|S^c|}\sum_{\substack{S_2 \subset S  \\ |S_2| \text{even}}} \left( \frac 12  \right)^{|S_2^c|} \left| S_2 \right|! \left(\frac 16\right)^{\frac{|S_2|}{2}}, 
\end{align*}
where $S$ runs over subsets of $\{1,2,3,\dots,n\}$, and $S_2$ runs over subsets of even cardinality of the set $S$. In particular, the average analytic ranks of $\cE_{\bZ/2\bZ}$ is bounded by $9.5$ and the average analytic ranks of $\cE_{\bZ/2\bZ \times \bZ/2\bZ}$ is bounded by $10.5$.
\end{theorem}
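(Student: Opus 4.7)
The plan is to deduce the moment bound from the $n$-level density inequality already set up, namely
\[
\frac{1}{|\cE_G(X)|}\sum_{E\in\cE_G(X)} r_E^n \;\leq\; \frac{1}{\phi_n(0)^n}\,D_n^*(\cE_G,\Phi),
\]
and then to expand $D_n^*(\cE_G,\Phi)$ through the explicit formula for elliptic curve $L$-functions (using GRH to ensure all $\gamma_{E,j}$ are real). Each of the $n$ factors $\phi_n(\gamma_{E,j_i}\tfrac{\log X}{2\pi})$ contributes either a boundary term involving $\widehat{\phi}_n(0)$ (plus controllable archimedean pieces bounded by $O(1/\log X)$) or a prime-power sum of the shape $-\tfrac{2}{\log X}\sum_m \Lambda(m) m^{-1/2}\widehat{\phi}_n(\tfrac{\log m}{\log X})\widehat{a}_E(m)$. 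Indexing by the subset $S\subset\{1,\dots,n\}$ of factors that yield a prime sum will produce the outer sum in the statement with weight $\widehat{\phi}_n(0)^{|S^c|}$ for each $S$.

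After truncating the prime support to $m\leq X^{\sigma_n}$ (justified by GRH), the inner sum factors into a sum over $k$-tuples $(m_{i_1},\dots,m_{i_k})$ with $k=|S|$. The next step is to swap the $E$-sum inside and to invoke the Frobenius trace formula, Theorem~\ref{traceF}, which evaluates $\sum_{E\in\cE_G(X)}\prod_j \widehat{a}_E(m_{i_j})$ with a main term equal to $(-1)^{(\text{number of squares})}|\cE_G(X)|$ precisely when the monomial is a perfect square with no odd prime exponents, and otherwise only contributes to the error. Thus the sum splits cleanly into a ``non-square'' part, whose contribution is $O(1/\log X)$ by Proposition~\ref{non-square}, and a ``square / diagonal'' part which produces the main term through Proposition~\ref{square}.

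In the diagonal part, one must further partition $S$ into the set $S_2$ of indices paired off with another index carrying a prime to the first power (contributing $\widehat{a}_E(p)^2$ per pair) and its complement $S_2^c$ carrying a square $\widehat{a}_E(q^2)$ (contributing a single $-1$ from $\widehat{a}_E(q^2)=\widehat{a}_E(q)^2-2$). The number of pairings of $S_2$ is $|S_2|!/2^{|S_2|/2}$, each pairing producing $2^{|S_2|/2}$ after the prime number theorem converts the sums over $p$ into integrals of $|u|\widehat{\phi}_n(u)^2$; together these give the factor $|S_2|!\bigl(\int |u|\widehat{\phi}_n(u)^2du\bigr)^{|S_2|/2}$. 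The $S_2^c$-indices each contribute $\tfrac{1}{2}\phi_n(0)$ from $\tfrac{2}{\log X}\sum_q\tfrac{\log q}{q}\widehat{\phi}_n(\tfrac{2\log q}{\log X})\to\tfrac{1}{2}\phi_n(0)$ combined with the sign $-1$. Finally, dividing by $\phi_n(0)^n$ and using $\phi_n(0)=\sigma_n^2/4$, $\widehat{\phi}_n(0)=\sigma_n/4$, and the identity $\int_\bR|u|\widehat{\phi}_n(u)^2du=\tfrac{1}{6}\phi_n(0)^2$ recorded in (\ref{estimate}), the factors collapse to $(1/\sigma_n)^{|S^c|}$, $(1/2)^{|S_2^c|}$ and $(1/6)^{|S_2|/2}$ respectively, yielding the stated bound.

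The main obstacle is the careful choice of $\sigma_n$: the trace-formula error terms are of order $\bigl(\prod p_i\bigr)X^{1/e(G)}+\bigl(\prod p_i\bigr)^2 X^{1/12}$, and after summing each $p_i$ up to $X^{\sigma_n}$ and multiplying $k\leq n$ of them one needs $X^{3n\sigma_n/2+1/e(G)}+X^{5n\sigma_n/2+1/12}\ll |\cE_G(X)|\asymp X^{1/d(G)}$. For $G=\bZ/2\bZ$ one has $d(G)=2$, $e(G)=3$, forcing $\sigma_n\leq\min(\tfrac{2}{3n}(1/d-1/e),\tfrac{2}{5n}(1/d-1/12))=\tfrac{1}{9n}$, and for $G=\bZ/2\bZ\times\bZ/2\bZ$ with $d(G)=3$, $e(G)=6$ the analogous computation gives $\tfrac{1}{10n}$. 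The numerical consequence follows by evaluating at $n=1$: the two subsets of $\{1\}$ contribute $1/\sigma_1$ (from $S=\emptyset$) and $1/2$ (from $S=\{1\}$, $S_2=\emptyset$), giving the bounds $9+1/2$ and $10+1/2$ respectively.
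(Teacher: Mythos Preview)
Your proposal is correct and follows essentially the same route as the paper: both deduce the moment bound from the inequality $r_E^n\phi_n(0)^n\leq D_n^*$, expand $D_n^*$ via the explicit formula indexed by subsets $S$, apply Propositions~\ref{non-square} and~\ref{square} together with the identity~(\ref{estimate}), and read off the $n=1$ values. One small inaccuracy: the truncation to $m\leq X^{\sigma_n}$ is automatic from the compact support of $\widehat{\phi}_n$, not from GRH---GRH is only used to guarantee that the $\gamma_{E,j}$ are real so that $\phi_n$ is nonnegative at them.
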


Now, we show the sparsity of elliptic curves in $\cE_{G}$ with high analytic ranks. 
We choose the test function $\phi_{2n}(x)$. Then $\widehat{\phi}_{2n}(0)=\frac{1}{4}\sigma_{2n}$, and $\phi_{2n}(0)=\frac 14 \sigma_{2n}^2$.

By Weil's explicit formula, we have
$$r_E \phi_{2n}(0) \leq \widehat{\phi}_{2n}(0) -\frac{2}{\log X} \sum_{m_i}\frac{\widehat{a}_E(m_i)\Lambda(m_i)}{\sqrt{m_i}}\widehat{\phi}_{2n} \left( \frac{\log m_i}{\log X}\right) + O\left( \frac{1}{\log X}\right),$$
hence
\begin{align*}
r_E \leq \frac{1}{\sigma_{2n}} + \frac{4}{\sigma_{2n}^2}\left( -\frac{2}{\log X} \sum_{m_i}\frac{\widehat{a}_E(m_i)\Lambda(m_i)}{\sqrt{m_i}}\widehat{\phi}_{2n} \left( \frac{\log m_i}{\log X}\right) \right) + O\left( \frac{1}{\sigma_{2n}^2 \log X}\right).
\end{align*}

Now assume that $r_E \geq \frac{1+C}{\sigma_{2n}}$ with some positive constant $C$. Then, for sufficiently large $X$, 
\begin{align*}
-\frac{2}{\log X} \sum_{m_i}\frac{\widehat{a}_E(m_i)\Lambda(m_i)}{\sqrt{m_i}}\widehat{\phi}_{2n} \left( \frac{\log m_i}{\log X}\right) \geq \frac{C\sigma_{2n}}{4}.
\end{align*}

Therefore,
\begin{align*}
\left| \{ E \in \cE_{G}(X) | r_E \geq  \frac{1+C}{\sigma_{2n}} \} \right| & \left( \frac{C\sigma_{2n}}{4} \right)^{2n} 
 \leq \sum_{E \in \cE_{G}(X)}  \left( -\frac{2}{\log X} \sum_{m_i}\frac{\widehat{a}_E(m_i)\Lambda(m_i)}{\sqrt{m_i}}\widehat{\phi}_{2n} \left( \frac{\log m_i}{\log X}\right)  \right)^{2n} \\
& \leq \left( \frac{\sigma_{2n}^2}{4}\right)^{2n} \sum_{S_2 \subset \{1,2,3,\dots,2n \}} \left( \frac{1}{2} \right)^{|S_2^c|}|S_2|!\left( \frac 16 \right)^{\frac{|S_2|}{2}}|\cE_{G}(X)| + O\left(\frac{X^{\frac{1}{d(G)}} }{\log X} \right),
\end{align*}
where the second inequality is justified by Propositions \ref{non-square}, \ref{square}, and finally we obtain
\begin{theorem} \label{rank-dist}
Assume GRH for elliptic curve $L$-functions.  Let $C$ be a positive constant, let $n$ a positive integer. We have
\begin{align*}
P\left(r_E \geq  \frac{(1+C)}{\sigma_{2n}} \right)  \leq \frac{\sum_{k=0}^{n}{ {2n} \choose {2k}}\left( \frac 12\right)^{2n-2k}(2k)!\left( \frac 16 \right)^k }{\left( \frac{C}{\sigma_{2n}} \right)^{2n}},
\end{align*}
where $\sigma_{2n}=\frac{1}{18n}$ and $\frac{1}{20n}$ for $G=\bZ/2\bZ$ and $G=\bZ/2\bZ \times \bZ/2\bZ$ respectively.
\end{theorem}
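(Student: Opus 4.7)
The plan is to bound $r_E$ via Weil's explicit formula applied to the test function $\phi_{2n}$ (the same family used in the moment computation, but with index $2n$ in place of $n$), then apply a Markov-type inequality by raising to the $2n$-th power and summing over $E \in \cE_G(X)$. Concretely, GRH and positivity of $\phi_{2n}$ on the critical line give
\begin{equation*}
r_E\,\phi_{2n}(0) \leq \widehat{\phi}_{2n}(0) - \frac{2}{\log X}\sum_{m}\frac{\widehat{a}_E(m)\Lambda(m)}{\sqrt{m}}\widehat{\phi}_{2n}\!\left(\frac{\log m}{\log X}\right) + O\!\left(\frac{1}{\log X}\right),
\end{equation*}
where $m$ ranges over prime powers with $\log m \leq \sigma_{2n}\log X$. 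Since $\widehat{\phi}_{2n}(0)/\phi_{2n}(0) = 1/\sigma_{2n}$, rearranging yields
\begin{equation*}
r_E \leq \frac{1}{\sigma_{2n}} + \frac{4}{\sigma_{2n}^{2}}\left(-\frac{2}{\log X}\sum_{m}\frac{\widehat{a}_E(m)\Lambda(m)}{\sqrt{m}}\widehat{\phi}_{2n}\!\left(\frac{\log m}{\log X}\right)\right) + O\!\left(\frac{1}{\sigma_{2n}^{2}\log X}\right).
\end{equation*}

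Next, I would observe that if $r_E \geq (1+C)/\sigma_{2n}$ then, for $X$ sufficiently large, the prime-power sum above must be at least $C\sigma_{2n}/4$ in absolute value. Raising to the $2n$-th power (an even exponent, so the inequality is preserved under summation) and summing over $E \in \cE_G(X)$,
\begin{equation*}
\left|\{E \in \cE_G(X): r_E \geq (1+C)/\sigma_{2n}\}\right|\cdot\left(\frac{C\sigma_{2n}}{4}\right)^{2n} \leq \sum_{E \in \cE_G(X)}\left(-\frac{2}{\log X}\sum_{m}\frac{\widehat{a}_E(m)\Lambda(m)}{\sqrt{m}}\widehat{\phi}_{2n}\!\left(\frac{\log m}{\log X}\right)\right)^{\!2n}.
\end{equation*}

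Expanding the $2n$-th power produces a sum over $2n$-tuples $(m_{i_1},\dots,m_{i_{2n}})$ of prime powers, each factor weighted by the appropriate $\Lambda$, $\widehat{\phi}_{2n}$, and $\widehat{a}_E$. I would then split this into the non-square case ($m_{i_1}\cdots m_{i_{2n}} \neq \square$) and the square case. Proposition \ref{non-square} handles the non-square terms, showing their total contribution is $\ll |\cE_G(X)|$ and therefore absorbed into the error after dividing by $(\log X)^{2n}$. For the square terms, Proposition \ref{square} applied with $S = \{1,\dots,2n\}$ evaluates the main contribution exactly: only even-cardinality subsets $S_2 \subseteq \{1,\dots,2n\}$ contribute, producing the combinatorial factor $|S_2|!\,(1/6)^{|S_2|/2}(1/2)^{|S_2^c|}$ times $|\cE_G(X)|\,\phi_{2n}(0)^{2n}$. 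Writing $|S_2|=2k$ and summing over $k$ gives the binomial sum in the theorem. Dividing both sides by $|\cE_G(X)|\,(C\sigma_{2n}/4)^{2n}$ and using $\phi_{2n}(0)^{2n}/(C\sigma_{2n}/4)^{2n} = \sigma_{2n}^{2n}/C^{2n}$ after cancellation yields the stated inequality.

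The main technical obstacle is verifying that the support parameter $\sigma_{2n}$ chosen here ($1/(18n)$ or $1/(20n)$) is admissible for Propositions \ref{non-square} and \ref{square}, since those propositions were stated with $\sigma_n = 1/(9n)$ or $1/(10n)$ for the $n$-level density. The point is that the test function here participates in a $2n$-fold product, so the relevant constraint translates exactly into the halved parameter $\sigma_{2n}$; once this bookkeeping is in place, the Frobenius trace formula (Theorem \ref{traceF}) guarantees the off-diagonal error terms are dominated by $X^{1/d(G)}(\log X)^{-1}$ as required. The numerical consequences ($P_{\bZ/2\bZ}(r_E \geq 23) \leq 0.0234$ with $n=1$, $C=22/18$, and analogously for $\bZ/2\bZ\times\bZ/2\bZ$) then follow by direct substitution.
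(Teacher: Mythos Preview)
Your proposal is correct and follows essentially the same argument as the paper: bound $r_E$ via the explicit formula with the test function $\phi_{2n}$, deduce that the prime-power sum is $\geq C\sigma_{2n}/4$ when $r_E\geq (1+C)/\sigma_{2n}$, raise to the $2n$-th power, sum over $E$, and evaluate using Propositions~\ref{non-square} and~\ref{square} with $S=\{1,\dots,2n\}$. Your remark about the admissibility of $\sigma_{2n}$ in those propositions (the $2n$-fold product forces the halved support) is exactly the bookkeeping the paper relies on implicitly.
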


\textbf{Acknowledgement} We would like to thank Dohyeong Kim, John Voight, John Cullinan, and Junyeong Park for the useful discussion, Chan-Ho Kim for introducing the work of Harron and Snowden \cite{HS}, Daniel Fiorilli for his comments.

\section{Appendix} \label{sec:appendix}
Here we summarize $f_G(a, b)$ and $g_G(a, b)$ for all torsion subgroup.
\begin{equation*}
\begin{array}{llll|}
\hline
f_5 &= -27 a^4+324 a^3 b-378 a^2 b^2-324 a b^3-27 b^4,\\
g_5 &= 54 a^6-972 a^5 b+4050 a^4 b^2+4050 a^2 b^4+972 a b^5+54 b^6, \\ \hline
f_6 &= -243 a^4-324 a^3 b-810 a^2 b^2-324 a b^3-27 b^4,\\
g_6 &= -1458 a^6-2916 a^ 5 b+7290 a^4 b^2+9720 a^3 b^3+5346 a^2 b^4+972 a b^5+54 b^6, \\ \hline
f_7 &= -27 a^8+324 a^7 b-1134 a^6 b^2+1512 a^5 b^3-945 a^4 b^4+378 a^2 b^6-108 a b^7-27 b^8 \\
g_7 &= 54 a^{12}-972 a^{11} b+6318 a^{10} b^2-19116 a^9 b^3+30780 a^8 b^4-26244 a^7 b^5+14742 a^6b^6  \\ 
	&\,\, -11988 a^5 b^7+9396 a^4 b^8-2484 a^3 b^9-810 a^2 b^{10}+324 a b^{11}+54 b^{12} \\ \hline
\end{array}
\end{equation*}
\begin{equation*}
\begin{array}{llll|}
\hline
f_8 &= -432 a^8+1728 a^7 b-6048 a^6 b^2+12096 a^5 b^3-12960 a^4 b^4+7776 a^3 b^5-2592 a^2 b^6+432 a b^7-27 b^8 \\
g_8 &= -3456 a^{12}+20736 a^{11} b-190080 a^9 b^3+555984 a^8 b^4-855360 a^7 b^5+840672 a^6 b^6 \\ 
	&\,\, -554688 a^5 b^7+246240 a^4 b^8-71712 a^3 b^9+12960 a^2 b^{10}-1296 a b^{11}+54 b^{12} \\ \hline
f_9 &= -27 a^{12}+324 a^{11} b-1458 a^{10} b^2+3456 a^9 b^3-5103 a^8 b^4+4860 a^7 b^5-3078 a^6 b^6 \\
	&\,\, +972 a^5 b^7+486 a^4 b^8-756 a^3 b^9+324 a^2 b^{10}-27 b^{12} \\
g_9 &= 54 a^{18}-972 a^{17} b+7290 a^{16} b^2-30780 a^{15} b^3+84078 a^{14} b^4-160380 a^{13} b^5+222912 a^{12} b^6 \\
	&\,\, -228420 a^{11} b^7+174960 a^{10} b^8-109728 a^9 b^9+73386 a^8 b^{10}-58320 a^7 b^{11}+39690 a^6 b^{12} \\
	&\,\, -16524 a^5 b^{13}+1458 a^4 b^{14}+2268 a^3 b^{15}-972 a^2 b^{16}+54 b^{18} \\ \hline
\end{array}
\end{equation*}
\begin{equation*}
\begin{array}{llll|}
\hline
f_{10} &= -432 a^{12}+3456 a^{11} b-11232 a^{10} b^2+19440 a^9 b^3-19440 a^8 b^4+7776 a^7 b^5+6912 a^6 b^6\\
	&\,\,\,-11664 a^5 b^7+6480 a^4 b^8-1080 a^3 b^9-432 a^2 b^{10}+216 a b^{11}-27 b^{12} \\
g_{10} &= 3456 a^{18}-41472 a^{17} b+217728 a^{16} b^2-661824 a^{15} b^3+1296000 a^{14} b^4-1767744 a^{13} b^5+1926288 a^{12} b^6 \\
	&\,\,\, -2037312 a^{11} b^7+2133216 a^{10} b^8-1803600 a^9 b^9+981072 a^8 b^{10}-199584 a^7 b^{11}-128304 a^6 b^{12} \\
	&\,\,\, +112752 a^5 b^{13}-32400 a^4 b^14-216 a^3 b^{15}+2592 a^2 b^{16}-648 a b^{17}+54 b^{18} \\ \hline
\end{array}
\end{equation*}
\begin{equation*}
\begin{array}{llll|}
\hline
f_{12}&= -3888 a^{16}+31104 a^{15} b-194400 a^{14} b^2+816480 a^{13} b^3-2269296 a^{12} b^4+4416768 a^{11} b^5-6318000 a^{10} b^6 \\
	&\,\,\, +6855840 a^9 b^7-5747760 a^8 b^8+3753216 a^7 b^9-1907712 a^6 b^{10}+747792 a^5 b^{11}-221616 a^4 b^{12} \\
	&\,\,\, +47952 a^3 b^{13}-7128 a^2 b^{14}+648 a b^{15}-27 b^{16} \\
g_{12}&= -93312 a^{24}+1119744 a^{23} b-2519424 a^{22} b^2-19502208 a^{21} b^3+175146624 a^{20} b^4-738377856 a^{19} b^5 \\
	&\,\,\, +2114216640 a^{18} b^6-4566176064 a^{17} b^7+7806726864 a^{16} b^8-10854518400 a^{15} b^9+12478123872 a^{14} b^{10} \\
	& \,\,\, -11984223456 a^{13} b^{11}  +9676823760 a^{12} b^{12}-6590020032 a^{11} b^{13}+3786612624 a^{10} b^{14} \\
	& \,\,\,-1831706784 a^9 b^{15}+742184208 a^8 b^{16}-249811776 a^7 b^{17}+68988672 a^6 b^{18}-15353712 a^5 b^{19} \\
	& \,\,\, +2682720 a^4 b^{20}-353808 a^3 b^{21}+33048 a^2 b^{22}-1944 a b^{23}+54 b^{24} \\ \hline
	\end{array}
\end{equation*}
\begin{equation*}
\begin{array}{llll|}
\hline
f_{2\times4}&= -27 a^4 - 378 a^2 b^2 - 27 b^4\\
g_{2\times4}& = -54 a^6 + 1782 a^4 b^2 + 1782 a^2 b^4 - 54 b^6 \\ \hline
f_{2\times6}&= -27 a^8 + 1296 a^6 b^2 - 12960 a^4 b^4 - 393984 a^2 b^6 - 62208 b^8
 \\
g_{2\times6} &= 54 a^{12} - 3888 a^{10} b^2 + 85536 a^8 b^4 - 2363904 a^6 b^6 + 43670016 a^4 b^8 + 86593536 a^2 b^{10} - 5971968 b^{12}
 \\ \hline
f_{2\times8} &= -452984832 a^{16}-1811939328 a^{15} b-3170893824 a^{14} b^2-3170893824 a^{13} b^3  -1953497088 a^{12} b^4 \\
	&\,\,\,-707788800 a^{11} b^5-88473600 a^{10} b^6+51314688 a^9 b^7+31961088 a^8 b^8+6414336 a^7 b^9-1382400 a^6 b^{10} \\
	&\,\,\,-1382400 a^5 b^{11}-476928 a^4 b^{12} -96768 a^3 b^{13}-12096 a^2 b^{14} -864 a b^{15}-27 b^{16} \\
g_{2\times8} &= 3710851743744 a^{24}+22265110462464 a^{23} b+61229053771776 a^{22} b^2+102048422952960 a^{21} b^3 \\
	&\,\,\, +114456583471104 a^{20} b^4  +90104118902784 a^{19} b^5+49618146557952 a^{18} b^6+17546820452352 a^{17} b^7  \\
	&\,\,\, +2194711511040 a^{16} b^8-1694163271680 a^{15} b^9-1411953721344 a^{14} b^{10} \\
	&\,\,\, -656375021568 a^{13} b^{11}-246536994816 a^{12} b^{12}-82046877696 a^{11} b^{13} \\
	&\,\,\, -22061776896 a^{10}b^{14}-3308912640 a^9 b^{15} +535818240 a^8 b^{16}+535486464 a^7 b^{17} \\
	&\,\,\,+189278208 a^6 b^{18}+42964992 a^5 b^{19}+6822144 a^4 b^{20}+760320 a^3 b^{21}+57024 a^2 b^{22}+2592 a b^{23}+54 b^{24} \\ \hline
\end{array}
\end{equation*}


\begin{thebibliography}{alpha}



\bibitem[BH]{BH} M. Bhargava, W. Ho, On average sizes of Selmer groups and ranks in families of elliptic curves having marked points, preprint.

\bibitem[BS15]{BS15} M. Bhargava, A. Shankar, Ternary cubic forms having bounded invariants, and the existence of a positive proportion of elliptic curves having rank 0.  
\emph{Ann. of Math. (2)} \textbf{181} (2015), no. 2, pp.587--621. 

\bibitem[BS]{BS} M. Bhargava, A. Shankar, The average size of the 5-Selmer group of elliptic curves is 6, and the average rank is less than 1, preprint, https://arxiv.org/abs/1312.7859.

\bibitem[Bru92]{Bru} A. Brumer, The average rank of elliptic curves I, \emph{Invent. Math.} \textbf{109} (1992), no. 3, pp.445--472. 


\bibitem[CHL19]{CHL} S. Chan, J. Hanselman, W. Li,  Ranks, 2-Selmer groups, and Tamagawa numbers of elliptic curves with $\bZ/2\bZ\times \bZ/8\bZ$-torsion. \emph{Proceedings of the Thirteenth Algorithmic Number Theory Symposium,} pp.173--189, Open Book Ser., 2, \emph{Math. Sci. Publ., Berkeley,} CA, 2019.

\bibitem[CJ]{CJ} P. J. Cho, K. Jeong, On the distribution of analytic ranks of elliptic curves, preprint.

\bibitem[CKV]{CKV} J. Cullinan, M. Kenney, J. Voight, On a probabilistic local-global principle for torsion on elliptic curves, preprint, https://arxiv.org/abs/2005.06669


\bibitem[Dic06]{Dic} L. E. Dickson, Criteria for the irreducibility of functions in a finite field, \emph{Bull. Amer. Math. Soc.} \textbf{13} (1906), no. 1, pp.1--8.


\bibitem[DS05]{DS} F. Diamond, J. Shurman,  A first course in modular forms.
Graduate Texts in Mathematics, \textbf{228}. \emph{Springer-Verlag}, New York, 2005. xvi+436 pp. ISBN: 0-387-23229-X

\bibitem[Gol79]{Goldfeld} D. Goldfeld, Conjectures on elliptic curves over quadratic fields, Number theory, Carbondale 1979 (Proc. Southern Illinois Conf., Southern Illinois Univ., Carbondale, Ill., 1979) , Lecture Notes in Math. 751 (Springer, Berlin, 1979) pp.108--118.

\bibitem[GT12]{GT} I. Grac\'ia-Selfa, J. M. Tornero, A complete Diophantine characterization of the rational torsion of an elliptic curve. \emph{Acta Math. Sin. (Engl. Ser.)} \textbf{28} (2012), no. 1, pp.83--96.

\bibitem[HS14]{HS} R. Harron, A. Snowden, Counting elliptic curves with prescribed torsion, \emph{J. Reine Angew. Math.} \textbf{729} (2017), pp.151--170.

\bibitem[Hea04]{Hea} D. R. Heath-Brown, The average analytic rank of elliptic curves, \emph{Duke Math. J.} \textbf{122} (2004), no. 3, pp.591--623.  

\bibitem[Kub76]{Kub} D. S. Kubert, Universal bounds on the torsion of elliptic curves. \emph{Proc. London Math. Soc.} (3) \textbf{33} (1976), no. 2, pp.193--237.

\bibitem[KP17]{KP2} N. Kaplan, I. Petrow, Elliptic curves over a finite field and the trace formula, \emph{Proc. Lond. Math. Soc.} (3) \textbf{115} (2017), no. 6, pp.1317--1372.






\bibitem[Mil]{Mil} S. J. Miller, 1- and 2-level densities for families of elliptic curves: Evidence for the underlying group symmetries, Thesis (Ph.D.) - Princeton University. 2002. 225 pp. ISBN: 978-0493-55316-0


\bibitem[PPVW19]{PPVW} J. Park, B, Poonen, J. Voight, M. M. Wood, A heuristic for boundness of ranks of elliptic curves, \emph{J. Eur. Math. Soc.} \textbf{21} (2019), no. 9, pp.2859--2903.

\bibitem[Sag]{Sag} SageMath, the Sage Mathematics Software System (Version 9.2),  The Sage Developers, \url{https://www.sagemath.org}.

\bibitem[Sch87]{Sch} R. Schoof, Nonsingular plane cubic curves over finite fields. \emph{J. Combin. Theory Ser. A} \textbf{46} (1987), no. 2, pp.183--211.


\bibitem[Wat]{WatDis} M. Watkins, A discursus on 21 as a bound for ranks of elliptic curves over $\bQ$, and sundry related topics, https://magma.maths.usyd.edu.au/~watkins/papers/DISCURSUS.pdf.

\bibitem[Wat$^+$14]{Wat15}  M. Watkins, S. Donnelly, N. D. Elkies, T. Fisher, A. Granville, N. F. Rogers,
Ranks of quadratic twists of elliptic curves, \emph{Num\'ero consacr\'e au trimestre "M\'ethodes arithm\'etiques et applications", automne 2013,} pp. 63--98,  \emph{Publ. Math. Basan\c con Alg\'ebre Th\'eorie Nr.}, 2014/2, Presses Univ. Franche-Comt\'e, Besan\c con, 2015.

\bibitem[You06]{Y} M. P. Young, Low-lying zeros of families of elliptic curves, \emph{J. Amer. Math. Soc.} \textbf{19} (2006), no. 1, pp.205--250. 





\end{thebibliography}
\end{document}